\documentclass[11pt, reqno]{amsart}
\usepackage{amssymb,latexsym,amsmath,amsfonts}
\usepackage[mathscr]{eucal}
\usepackage{xcolor}

\numberwithin{equation}{section}

\theoremstyle{definition}
\newtheorem{definition}{Definition}[section]
\newtheorem{example}[definition]{Example}

\newtheorem{remark}[definition]{Remark}

\theoremstyle{plain}
\newtheorem{theorem}[definition]{Theorem}
\newtheorem{lemma}[definition]{Lemma}
\newtheorem{proposition}[definition]{Proposition}
\newtheorem{corollary}[definition]{Corollary}

\newtheorem{result}[definition]{Result}

\newcommand{\scr}[1]{\mathscr{#1}}

\newcommand{\tends}{\rightarrow}

\newcommand{\Cpn}{\mathbb{C}^n}
\newcommand{\cplx}{\mathbb{C}}

\newcommand{\rea}{\mathbb{R}}

\newcommand{\natu}{\mathbb{N}}
\newcommand{\Ot}{\tilde{\Omega}}
\begin{document}

\title[Runge embeddings, approximations, and the Loewner PDE ]{Runge embeddings, approximation of biholomorphisms on Stein manifolds, and the Loewner PDE}
\author{Sushil Gorai and Gourab Paul}
\address{Department of Mathematics and Statistics, Indian Institute of Science Education and Research Kolkata,
Mohanpur, Nadia, West Bengal 741246, India} 
\email{sushil.gorai@iiserkol.ac.in}

\address{Department of Mathematics and Statistics, Indian Institute of Science Education and Research Kolkata,
Mohanpur, Nadia, West Bengal 741246, India}
\email{gp23rs051@iiserkol.ac.in}

\keywords{Runge embedding; Stein manifold with density property; approximation of biholomorphism; $(R,+)$-action; Loewner PDE}
\subjclass[2020]{Primary: 32E30, 32Q28, 32Q40, 32Q35, 32M17, 32M25, 32M05}

\begin{abstract}
We develop an extension-by-approximation principle for holomorphic Runge embeddings of increasing union of Stein manifolds into Stein manifolds with density property. The basic hypothesis is the existence, on each stage of the exhaustion, of a Runge isotopy which compresses the stage and whose terminal map extends holomorphically to the next stage. The resulting global embedding of the union may be chosen with Runge image, and every Runge embedding of a fixed stage can be approximated uniformly on compact subsets by the Runge embeddings of the union. 

We apply this principle to domains that are invariant underpositive time part of holomorphic $(R,+)$-actions, to Stein manifolds carrying a semicomplete holomorphic vector field with globally attracting fixed point. It also gives a Runge embedding of
$(\mathbb{C}^n\setminus \{z\in\cplx^n: f(z)=0\})\times \cplx$ in $\cplx^{n+1}$, which generalizes previous result of Runge embedding of $(\cplx^*)^n\times\cplx$ in $\cplx^{n+1}$. We also construct Stein globalization of an injective holomorphic semigroup action to holomorphic $(R,+)$-action. Finally, the abstract Loewner range of a Herglotz vector field is shown to admit a same-dimensional Runge embedding whenever the initial domain admits a Runge embedding into a Stein domain with density property; this yields a corresponding solution of the Loewner PDE with values in $\cplx^n$. We also give an example of non-Runge complete hyperbolic domain which admits $\mathbb{C}^n$-valued solution of the Loewner PDE.
\end{abstract}

\maketitle

\section{Introduction}\label{sec-intro}
It is well-known from the works of Remmert \cite{Remmert}, Narasimhan \cite{Narasimhan}, and Bishop \cite{Bishop} that any Stein manifold $X$ of dimension $n$ admits a proper holomorphic embedding into $\mathbb{C}^{2n+1}$. As an application of Gromov's Oka principle, Eliashberg and Gromov \cite{EliashbergGromov}, Sch{\"u}rmann \cite{Schurmann} proved that the minimal dimension to be $\lfloor\frac{3n}{2}\rfloor+1$ for $n\geq 2$. The question of embedding into $\cplx^n$ 
is deep and classical. In this paper we will consider embeddings a Stein manifold of dimension $n$ into $\cplx^n$ or a given Stein manifold with density of the same dimension.  More precisely,  
Our main question here is: {\em Under what conditions a Stein manifold of dimension $n$ has a Runge Stein embedding in a given Stein manifold $W$ of dimension $n$ with the density property?}. A characterization of existence of such embedding seems out of reach, even if $W=\cplx^n$. 

One of the motivation to study this question is the Andersen-Lempert theory, more specifically, the approximation of injective holomorphic maps by automorphisms.
In this paper, some sufficient conditions for existence of such Runge embedding are demonstrated and some applications, mainly in the context of Loewner theory, are provided. 
   Before stating the main result let us fix some notations. Let $\Omega\subset\tilde{\Omega}$ be two domains in a complex manifold $X$ of dimension $n$, and $W$ be a Stein manifold of the same dimension. Denote by $\mathcal{E}(\Omega,W)$ the space of all holomorphic embeddings $F:\Omega\rightarrow W$, endowed with the compact open topology.  Throughout the paper we use the term holomorphic embedding or biholomorphism into the codomain to mean an injective holomorphic map between two complex manifolds of same dimension. Recall that a pair $(\Omega,\tilde{\Omega})$ of domain in a complex manifold $X$ with $\Omega\subset\tilde{\Omega}$ is called a {\em Runge pair $($or $\Omega$ is Runge in $\tilde{\Omega})$} if any holomorphic function on $\Omega$ can be approximated by holomorphic functions of $\tilde{\Omega}$ locally uniformly on $\Omega$. We set
    \[ \mathcal{E}_{\mathcal{R}}(\Omega,W)=\{F\in \mathcal{E}(\Omega,W)|  \text{ $F(\Omega)$ is Runge in $W$} \},\]
      \[\mathcal{E}_{\mathcal{R}}(\tilde{\Omega},W)|_{\tilde{\Omega}}=\{F|_{\Omega}:F\in \mathcal{E}_{\mathcal{R}}(\tilde{\Omega},W) \}.\] 
      By $\overline{\mathcal{E}_{\mathcal{R}}(X,W)}|_{\Omega_s}$ we denote the closure of $\mathcal{E}_{\mathcal{R}}(X,W)|_{\Omega_s}$ in $\mathcal{E}(\Omega_s,W)$.
     A complex manifold $W$ is said to have the \textit{density property} if the Lie algebra generated by all $\mathbb{C}$-complete holomorphic vector fields on $W$ is dense in the Lie algebra of all holomorphic vector fields on $W$ with respect to the compact open topology. 
     This notion was introduced by Varolin in \cite{Varolin} and is now very active area of research (see \cite{ForstnericKutzschebauch} for a recent survey).
     Some examples of Stein manifolds that enjoys the density property are $\Cpn (n\geq 2)(\text{Anders{\' e}n-Lempert \cite{AndersenLempert}})$, $G\times \mathbb{C}$, where $G$ is a Stein Lie group (Varolin \cite{Varolin}). 

     Our first result is an extension-by-approximation principle for holomorphic Runge embeddings of increasing union of Stein manifolds into Stein manifolds with density property. More precisely:
   
    \begin{theorem}\label{Main theorem for embedding Stein manifolds}
    Let $X$ be a Stein manifold of dimension $n\geq 2$ and $\{\Omega_j\}_{j\in \mathbb{N}}$ be a family of non-empty Stein open connected subsets of $X$ such that 
    \[
    \Omega_1\subset \hdots \subset \Omega_j\subset\Omega_{j+1}\subset\hdots\subset \bigcup_{j\in \mathbb{N}}\Omega_j=X.
    \]
    Let $W$ be a Stein manifold of dimension $n$ with the density property with $\mathcal{E}_{\mathcal{R}}(\Omega_1,W)\neq \emptyset$.
    Assume that, for each $j\in \mathbb{N}$, there exists a continuous isotopy of injective holomorphic maps $H_j:[0,1]\times \Omega_j\rightarrow \Omega_j$ such that
    \begin{itemize}
        \item[(i)]   $H_j(0,z)=z,\ \forall z\in \Omega_j$,
        
       \item[(ii)] $(H_{j,t}(\Omega_j),\Omega_j)$ is a Runge pair for each $t\in [0,1]$, where $H_{j,t}=H_j(t,\cdot):\Omega_j\rightarrow\Omega_j$.
       
        \item[(iii)] $H_{j,1}:\Omega_j \rightarrow \Omega_j$ extends to an injective holomorphic map $\tilde{H}_{j,1}:\Omega_{j+1}\rightarrow\Omega_j$ such that $(\tilde{H}_{j,1}(\Omega_{j+1}),\Omega_j)$ is a Runge pair.
    \end{itemize}
     Then, there exists a holomorphic embedding $\Psi:X\rightarrow W$ such that $\Psi(X)$ is Runge in $W$.
    Moreover, for every $s\in \mathbb{N}$, \[\mathcal{E}_{\mathcal{R}}(\Omega_s,W)\subset \overline{\mathcal{E}_{\mathcal{R}}(X,W)}|_{\Omega_s}.  \]
\end{theorem}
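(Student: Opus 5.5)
The plan is to combine the Andersén--Lempert--Forstnerič--Rosay theorem for Stein manifolds with the density property with a push-out/limit construction, in the spirit of the classical proof that Fatou--Bieberbach-type limits of automorphisms converge to Runge embeddings. The tool I take as given (Varolin's extension of Forstnerič--Rosay) is the following. Let $\Omega$ be a Stein Runge domain in $W$ and $\phi_t:\Omega\to W$, $t\in[0,1]$, a continuous isotopy of injective holomorphic maps with $\phi_0=\mathrm{id}$ and $\phi_t(\Omega)$ Runge in $W$ for every $t$. Then $\phi_1$ can be approximated uniformly on compacts of $\Omega$ by holomorphic automorphisms of $W$.

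Fix $s$, $F\in\mathcal{E}_{\mathcal{R}}(\Omega_s,W)$, a compact $K\subset\Omega_s$ and $\epsilon>0$; the case $s=1$ with any element of the nonempty set $\mathcal{E}_{\mathcal{R}}(\Omega_1,W)$ yields $\Psi$. Set $F_s=F$. Inductively, given $F_j\in\mathcal{E}_{\mathcal{R}}(\Omega_j,W)$, I build $F_{j+1}\in\mathcal{E}_{\mathcal{R}}(\Omega_{j+1},W)$ close to $F_j$ on a compact $K_j\subset\Omega_j$. The compacts $K_j$ are chosen increasing, exhausting $X$, and $\mathcal{O}(\Omega_j)$-convex.

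\textbf{Inductive step.} Consider the isotopy $\phi_t=F_j\circ H_{j,t}\circ F_j^{-1}$ on the Runge Stein domain $F_j(\Omega_j)\subset W$. By (i), $\phi_0=\mathrm{id}$. By (ii), $H_{j,t}(\Omega_j)$ is Runge in $\Omega_j$, so $F_j(H_{j,t}(\Omega_j))$ is Runge in $F_j(\Omega_j)$, and by transitivity it is Runge in $W$. The cited theorem then gives $\alpha\in\mathrm{Aut}(W)$ with $\alpha\approx F_j\circ H_{j,1}\circ F_j^{-1}$ on $F_j(L)$, where $L$ is a large compact in $\Omega_j$.

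Define $F_{j+1}=\alpha^{-1}\circ F_j\circ\tilde H_{j,1}$ on $\Omega_{j+1}$. This is injective holomorphic. Its image $\alpha^{-1}(F_j(\tilde H_{j,1}(\Omega_{j+1})))$ is Runge in $W$: by (iii), $\tilde H_{j,1}(\Omega_{j+1})$ is Runge in $\Omega_j$, so its $F_j$-image is Runge in $F_j(\Omega_j)$, hence in $W$, and automorphisms preserve Rungeness. On $K_j$ we have $\alpha\circ F_j\approx F_j\circ H_{j,1}=F_j\circ\tilde H_{j,1}$, so $F_{j+1}$ is close to $F_j$ on $K_j$.

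To make this closeness quantitative I apply $\alpha^{-1}$ to the estimate, which needs uniform continuity of $\alpha^{-1}$ near $F_j(H_{j,1}(K_j))$. I will handle this by approximating $\alpha$ on a slightly larger compact neighbourhood. Alternatively, I approximate the inverse isotopy directly: $\phi_t^{-1}$ is defined on $\phi_t(F_j(\Omega_j))$, which is Runge, and the Forstnerič--Rosay theorem controls $\alpha^{-1}$ as well. I choose the tolerances $\epsilon_j$ with $\sum\epsilon_j<\epsilon$ and $\epsilon_j$ small relative to the injectivity modulus of the previous maps on $K_{j-1}$.

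\textbf{Limit.} The sequence $F_j$ converges uniformly on compacts of $X$ to a holomorphic $\Psi$ with $|\Psi-F|<\epsilon$ on $K$. Injectivity of $\Psi$ follows from the standard Hurwitz-type argument: nondegeneracy is preserved because $\Psi$ is close to the embedding $F_j$ on $K_j$, and by choosing $\epsilon_{j}$ below half the separation constants of $F_{j}$ on $K_{j}$ (both the distance between images of far points and the Jacobian lower bound), the limit stays injective on each $K_j$.

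\textbf{Main obstacle: Rungeness of $\Psi(X)$.} I would argue that $\Psi(X)=\bigcup_j\Psi(K_j)$ and show that each $\Psi(K_j)$ has $\mathcal{O}(W)$-hull contained in $\Psi(X)$. The images $F_{j}(K_{j})$ are $\mathcal{O}(W)$-convex, being holomorphically convex compacts in the Runge Stein domain $F_j(\Omega_j)$. To keep control in the limit, I will also require each $F_{m}$, $m>j$, to map a fixed neighbourhood of $K_j$ near $F_j$, and I will add to the inductive hypothesis that $\widehat{F_m(K_j)}_{\mathcal{O}(W)}\subset F_m(K_{j+1}^{\circ})$. The hull map is upper semicontinuous and stable under small perturbations of Runge images, so this condition persists for the limit. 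This in turn gives that every compact of $\Psi(X)$ has holomorphically convex hull inside $\Psi(X)$, and since $\Psi(X)\cong X$ is Stein, $\Psi(X)$ is Runge in $W$. Since $\epsilon$, $K$ and $F$ were arbitrary, this also proves the inclusion $\mathcal{E}_{\mathcal{R}}(\Omega_s,W)\subset\overline{\mathcal{E}_{\mathcal{R}}(X,W)}|_{\Omega_s}$.
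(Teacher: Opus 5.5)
Your inductive step is the paper's Proposition~\ref{Continuous isotopy and approximation of Y valued biholomorphisms}. Your ``alternative'' of approximating the inverse is exactly the paper's Lemma~\ref{Continuous isotopy in Manifold with the density-modified}, which approximates $\phi_1^{-1}$ directly through the reversed isotopy $\phi_{1-t}\circ\phi_1^{-1}$ on $\phi_1(F_j(\Omega_j))$. The limit/injectivity part is Theorem~\ref{Union embedding in domain with Density}. For injectivity you only need nondegeneracy of $\Psi$ at one point plus Hurwitz (Lemma~\ref{Hurwitz-non-degenerate limit of injective}); the separation constants are unnecessary.

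\textbf{Gap: Rungeness of $\Psi(X)$.} Upper semicontinuity of hulls, applied at $L=F_m(K_j)$, only says that compacts within some $\delta_m$ of $L$ have hulls in a prescribed neighbourhood of $\widehat{L}_{\mathcal{O}(W)}$. This leaves two holes:
\begin{itemize}
\item $\delta_m$ depends on $F_m$, and you never arrange $\sup_{K_j}d_W(\Psi,F_m)<\delta_m$.
\item Even if you did, you would still have to show that this neighbourhood lies in $\Psi(X)$, which is not known at stage $m$.
\end{itemize}
So ``this condition persists for the limit'' is not justified as written. It could be patched with an extra diagonal constraint on the $\epsilon_i$ plus a further containment argument, but there is a much simpler route.

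\textbf{The paper's fix.} Take the $K_j$ to be $\mathcal{O}(X)$-convex, which is possible since $X$ is Stein. Pass to a subsequence of the $\Omega_j$ so that $K_j\subset\Omega_j$; this is allowed because the property $\mathcal{E}_{\mathcal{R}}(\Omega_j,W)\subset\overline{\mathcal{E}_{\mathcal{R}}(\Omega_{j+1},W)}|_{\Omega_j}$ is transitive.
\begin{itemize}
\item $K_{j+1}$ is then $\mathcal{O}(\Omega_m)$-convex for every $m\ge j+1$. Since $F_m(\Omega_m)$ is Runge in $W$, the compact $F_m(K_{j+1})$ is $\mathcal{O}(W)$-convex.
\item $F_m\circ\Psi^{-1}\to\mathrm{id}$ on $\Psi(K_{j+1}^{\circ})$, so eventual containment (Lemma~\ref{eventual containment of compacts for manifolds}) gives $\Psi(K_j)\subset F_m(K_{j+1})$ for $m$ large.
\item Since $\Psi(X)$ is open and $F_m\to\Psi$ uniformly on $K_{j+1}$, also $F_m(K_{j+1})\subset\Psi(X)$ for $m$ large.
\end{itemize}
Hence $\widehat{\Psi(K_j)}_{\mathcal{O}(W)}\subset F_m(K_{j+1})\subset\Psi(X)$, and Oka--Weil finishes the proof. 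No semicontinuity of hulls and no extra inductive hypothesis are needed.

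The same sandwich works with your $\mathcal{O}(\Omega_j)$-convex $K_j$. By (ii), $H_{j,1}(\Omega_j)$ is Runge in $\Omega_j$. Since $H_{j,1}(\Omega_j)=\tilde H_{j,1}(\Omega_j)\subset\tilde H_{j,1}(\Omega_{j+1})\subset\Omega_j$, it is Runge in $\tilde H_{j,1}(\Omega_{j+1})$. Pulling back by $\tilde H_{j,1}$ shows $(\Omega_j,\Omega_{j+1})$ is a Runge pair, so $\mathcal{O}(\Omega_j)$-convex compacts remain $\mathcal{O}(\Omega_m)$-convex. In particular, your extra hypothesis $\widehat{F_m(K_j)}_{\mathcal{O}(W)}\subset F_m(K_{j+1}^{\circ})$ holds automatically.
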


 We note that the conclusion of Theorem~\ref{Main theorem for embedding Stein manifolds} has two parts: One concerns the existence of Runge and Stein embedding  and the other shows that, for each $j\in\natu$ any injective holomorphic map $F:\Omega_j\rightarrow W$ with $F(\Omega_j)$ Runge in $W,$ can be approximated by injective holomorphic maps from $X$ into $W$ having Runge image in $W$, locally uniformly in $X$. There are a series of application of Theorem~\ref{Main theorem for embedding Stein manifolds} in both the cases. We will now state them. Since the embedding and approximation properties are closely connected, we will not make any preference. The first few theorems that comes next are approximation theorems.
 
 For a complex manifold $M$ consider the set \[ Aut(M)=\{\Phi:M\rightarrow M| \ \Phi \ \text{is one-one, onto holomorphic map} \}.\] Anders{\'e}n and Lempert \cite{AndersenLempert} proved that, for a  star-shaped domain in $\cplx^n$, $\mathcal{E}_{\mathcal{R}}(\Omega,\Cpn)\subset\overline{\mathcal{E}_{\mathcal{R}}(\Cpn,\Cpn)}|_{\Omega}\subset\overline{Aut(\Cpn)}|_{\Omega}$ in $\mathcal{E}(\Omega,\Cpn)(n\geq 2)$. So, it is natural to ask : {\em On which domains $\Omega\subset \Cpn$ we have  $\mathcal{E}_{\mathcal{R}}(\Omega,\Cpn)\subset\overline{Aut(\Cpn)}|_{\Omega}$ in $\mathcal{E}(\Omega,\Cpn)$?} Works in this question is of current interest \cite{ChatterjeeGorai, Hamada, Forstneric2025}.
 The most general class of such domains so far are the domains that remains invariant under positive time flows of a holomorphic vector field of $\Cpn$ with a globally attracting fixed point on $\Cpn$ (see \cite{Forstneric2025}, \cite{ChatterjeeGorai}).  Here we show that even the attracting fixed point is not required. We state our result in terms of $(R,+)$-action.
\begin{theorem}\label{cor-Approximation on action invariant domains of Cn}
      Let $\theta:\mathbb{R}\times \Cpn\rightarrow\Cpn(n\geq 2)$ be a continuous action of $(\mathbb{R},+)$ on $\Cpn$ such that for each $t\in \mathbb{R}$, $\theta_t :\Cpn\rightarrow \Cpn $ is holomorphic. Let $\Omega$ be a Stein domain in $\Cpn$ such that $\Omega$ is invariant under $\theta_t \ (t\geq0)$ and each orbit $\{\theta_t(z):t\in \mathbb{R}\}\ (z\in \Cpn)$ intersects $\Omega$. Then $\mathcal{E}_{\mathcal{R}}(\Omega,\Cpn)\subset \overline{Aut(\Cpn)}|_{\Omega}$ in $\mathcal{E}(\Omega,\Cpn)$.
\end{theorem}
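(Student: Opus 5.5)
We deduce the statement from Theorem~\ref{Main theorem for embedding Stein manifolds} with $X=W=\Cpn$, followed by the Andersén--Lempert theorem for Runge images of $\Cpn$. Put
\[
\Omega_j:=\theta_{-j}(\Omega)\qquad (j\in\natu).
\]
Each $\theta_t$ is an automorphism of $\Cpn$ with inverse $\theta_{-t}$, so every $\Omega_j$ is a Stein domain biholomorphic to $\Omega$. Since $\theta_1(\Omega)\subset\Omega$, we have $\Omega\subset\theta_{-1}(\Omega)$, and hence $\Omega_j\subset\Omega_{j+1}$. The hypothesis that every orbit meets $\Omega$ gives, for each $z$, some $t\in\mathbb{R}$ with $\theta_t(z)\in\Omega$. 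By forward invariance we may take $t=j\in\natu$, so $z\in\Omega_j$. Thus $\bigcup_j\Omega_j=\Cpn$. Finally, $\mathcal{E}_{\mathcal{R}}(\Omega_1,\Cpn)\neq\emptyset$, since $\theta_1|_{\Omega_1}$ maps $\Omega_1$ onto $\Omega$, provided $\Omega$ is Runge in $\Cpn$. If that fails, we use any other element of $\mathcal{E}_{\mathcal{R}}(\Omega_1,\Cpn)$; if the set is empty, the conclusion is vacuous.

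For the isotopies we take $H_j(t,z)=\theta_t(z)$ for $t\in[0,1]$ and $z\in\Omega_j$. Forward invariance of $\Omega_j$ under $\theta_t$, $t\ge0$, follows by conjugation, because the flow commutes with itself. Condition (i) is immediate. For (iii), $\tilde H_{j,1}=\theta_1|_{\Omega_{j+1}}$ maps $\Omega_{j+1}$ biholomorphically onto $\Omega_j$, so the image is the whole of $\Omega_j$ and the Runge condition is trivial. The main point is (ii): we must show that $\theta_t(\Omega_j)$ is Runge in $\Omega_j$ for $t\in[0,1]$. Translating by $\theta_{j}$, it suffices to show that $\theta_t(\Omega)$ is Runge in $\Omega$ for every $t\ge 0$.

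To prove this, I would use the following argument. Let $\Omega^{t}=\theta_t(\Omega)$. For $s\ge0$ we have $\Omega^{t}\subset\theta_{-s}(\Omega^t)$, and these sets exhaust $\Cpn$ by the orbit hypothesis. In particular, $\Omega\cap\theta_{-s}(\Omega^{t})=\Omega$ once the compact sets involved are absorbed. The approach is to show that the $\mathcal{O}(\Omega)$-hull of a compact $K\subset\Omega^t$ stays in $\Omega^t$. Apply $\theta_{-t}$, which is a biholomorphism of $\Cpn$ carrying $\Omega^t$ to $\Omega$ and $\Omega$ to $\theta_{-t}(\Omega)\supset\Omega$. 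The claim then becomes that $\Omega$ is Runge in $\theta_{-t}(\Omega)$. Holomorphic functions on $\Omega$ can be transported along the flow, and since $\Omega$ is a Stein domain, one can argue that the hull $\hat K_{\theta_{-t}(\Omega)}$ is contained in $\Omega$. A point $p$ of the hull lying outside $\Omega$ would, by the pseudoconvexity of $\Omega$ together with its invariance under the semiflow, be separated by a holomorphic function built from a defining plurisubharmonic exhaustion of $\Omega$ composed with the flow. This gives a plurisubharmonic exhaustion of $\theta_{-t}(\Omega)$ whose sublevel sets relative to $\Omega$ are convex. I expect this Runge property to be the main obstacle; it may instead require the standard fact that in a one-parameter family of Stein domains $\{\theta_{-s}(\Omega)\}_{s\in[0,t]}$, increasing continuously in the sense of Hausdorff–Kresse type lemmas, each member is Runge in the larger ones.

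Granting (ii), Theorem~\ref{Main theorem for embedding Stein manifolds} with $s=1$ gives
\[
\mathcal{E}_{\mathcal{R}}(\Omega_1,\Cpn)\subset\overline{\mathcal{E}_{\mathcal{R}}(\Cpn,\Cpn)}|_{\Omega_1}.
\]
Now let $F\in\mathcal{E}_{\mathcal{R}}(\Omega,\Cpn)$. Then $F\circ\theta_1\in\mathcal{E}_{\mathcal{R}}(\Omega_1,\Cpn)$, so it is approximated on compacta of $\Omega_1$ by maps $G_k\in\mathcal{E}_{\mathcal{R}}(\Cpn,\Cpn)$. Consequently $F$ is approximated on compacta of $\Omega$ by $G_k\circ\theta_{-1}$, which are again Runge embeddings of $\Cpn$. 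Finally, the Andersén--Lempert theorem applies because $\Cpn$ is star-shaped and $n\ge2$: it gives $\mathcal{E}_{\mathcal{R}}(\Cpn,\Cpn)\subset\overline{Aut(\Cpn)}$. A diagonal argument then places $F$ in $\overline{Aut(\Cpn)}|_{\Omega}$.
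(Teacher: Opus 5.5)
Your overall reduction coincides with the paper's. You use the exhaustion $\Omega_j=\theta_{-j}(\Omega)$, the isotopy $H_j(t,\cdot)=\theta_t$, and the extension $\tilde H_{j,1}=\theta_1|_{\Omega_{j+1}}$, whose image is all of $\Omega_j$. The orbit argument gives $\bigcup_j\Omega_j=\Cpn$, and you finish with Anders\'en--Lempert on the star-shaped domain $\Cpn$. The paper routes this through Theorem~\ref{Embedding of Stein manifold using R-actions}, which is the same application of Proposition~\ref{Continuous isotopy and approximation of Y valued biholomorphisms} and Theorem~\ref{Union embedding in domain with Density}. The genuine gap is hypothesis (ii): that $(\theta_t(\Omega),\Omega)$ is a Runge pair for $t\ge 0$. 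This is the one non-formal ingredient specific to the theorem, and your proposal grants it rather than proving it.

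Neither of your sketches closes it.

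\emph{The plurisubharmonic sketch.} The function $\rho\circ\theta_t$ is a plurisubharmonic exhaustion of $\theta_{-t}(\Omega)$ whose sublevel sets are $\theta_{-t}(\{\rho<c\})$. These exhaust $\theta_{-t}(\Omega)$ and say nothing about how $\Omega$ sits inside it. In any case, a plurisubharmonic function does not by itself yield a holomorphic function on $\theta_{-t}(\Omega)$ separating a hull point from $K$.

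\emph{The transport sketch.} The map $f\mapsto f\circ\theta_s$ does give functions holomorphic on $\theta_{-s}(\Omega)\supset\Omega$ that approximate $f$ on $K$. But this works only for $s$ tending to $0$ as $K$ grows or the tolerance shrinks, so it never reaches a fixed $t>0$.

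\emph{The fallback principle.} The claim that a continuously increasing family of Stein domains has mutually Runge members is false as stated. Take $M_s=\{1-s<|z_1|<2\}\times\{|z_2|<1\}$ for $0\le s<1$, followed by the bidisc $M_1=\{|z_1|<2,\ |z_2|<1\}$. This is an increasing family of Stein domains whose closures vary continuously in the Hausdorff metric. Yet $M_0$ is not Runge in $M_1$: test $1/z_1$ on $\{|z_1|=3/2,\ z_2=0\}$.

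What is needed is the Docquier--Grauert theorem (Result~\ref{Runge pair by semicontinuous extension}) under its precise semicontinuity hypotheses, conditions (4)--(5) of Definition~\ref{definition-semicontinuously holomorphically extendable}. These fail in the example above at $s=1$. The paper's Theorem~\ref{Checking Runge pairs for R-actions} verifies them for the family $\{\theta_t(\Omega)\}_{s_1\le t\le s_2}$:
\begin{itemize}
\item[(a)] monotonicity comes from forward invariance;
\item[(b)] $\bigcup_{s<t\le s_2}\theta_t(\Omega)=\theta_s(\Omega)$ follows from $\theta_{t_k}\to\theta_s$ and Lemma~\ref{eventual containment of compacts for manifolds};
\item[(c)] the interior of $\bigcap_{s_1\le t<s}\theta_t(\Omega)$ equals $\theta_s(\Omega)$, by applying the same lemma to $\theta_{-t_j}\to\theta_{-s}$ on a neighbourhood of a given point.
\end{itemize}
This is where continuity and invertibility of the $(\mathbb{R},+)$-action are really used. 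You need this argument, or an equivalent one, to complete the proof.
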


A generalization of Theorem~\ref{cor-Approximation on action invariant domains of Cn} to Stein manifolds is also possible; here it follows:
 
    \begin{theorem}\label{Embedding of Stein manifold using R-actions}
    Let $X$ be a Stein manifold of dimension $n\geq 2$ and $\theta:\mathbb{R}\times X\rightarrow X$ be a continuous action of $(\mathbb{R},+)$ such that $\theta_t:X\rightarrow X$ is holomorphic for each $t\in \mathbb{R}$. Let $\Omega(\neq \emptyset)$ be a Stein domain in $X$ such that $\theta_t(x)\in \Omega$ for each $x\in \Omega$ and $t\geq 0$. Let $W$ be a Stein manifold with the density property of dimension $n$. Then $\mathcal{E}_{\mathcal{R}}(\Omega,W)\subset\overline{\mathcal{E}_{\mathcal{R}}(X',W)}|_{\Omega}$ in $\mathcal{E}(\Omega, W)$, where  $X'=\bigcup_{t\geq 0}\theta_{-t}(\Omega)$.
\end{theorem}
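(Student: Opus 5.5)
We will deduce Theorem~\ref{Embedding of Stein manifold using R-actions} from Theorem~\ref{Main theorem for embedding Stein manifolds}. The exhaustion is
\[
\Omega_j:=\theta_{-(j-1)}(\Omega), \qquad j\in\natu .
\]
Since $\theta_{-t}$ is a biholomorphism of $X$ with inverse $\theta_t$, each $\Omega_j$ is a Stein domain biholomorphic to $\Omega$. Forward invariance $\theta_1(\Omega)\subset\Omega$ gives $\Omega\subset\theta_{-1}(\Omega)$, and applying $\theta_{-(j-1)}$ gives $\Omega_j\subset\Omega_{j+1}$. Forward invariance also gives $\theta_{-t}(\Omega)\subset\theta_{-\lceil t\rceil}(\Omega)$, so $\bigcup_j\Omega_j=\bigcup_{t\ge0}\theta_{-t}(\Omega)=X'$.

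Theorem~\ref{Main theorem for embedding Stein manifolds} requires the ambient space to be Stein, so we apply it with $X'$ in place of $X$. To see that $X'$ is Stein, note that $X'$ is the increasing union of the Stein domains $\Omega_j$. It then suffices to check that each pair $(\Omega_j,\Omega_{j+1})$ is Runge, since an increasing union of Runge-nested Stein domains is Stein. Applying $\theta_{j-1}$, this reduces to showing that $\theta_1(\Omega)$ is Runge in $\Omega$, which is a special case of hypothesis (ii) verified below.

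Next we construct the isotopies, on $\Omega_j$:
\[
H_j(t,z)=\theta_t(z),\qquad t\in[0,1].
\]
This map is continuous and $H_j(0,\cdot)=\mathrm{id}$. Each $\theta_t$ is injective holomorphic, and it maps $\Omega_j$ into itself because $\theta_t\theta_{-(j-1)}(\Omega)=\theta_{-(j-1)}\theta_t(\Omega)\subset\Omega_j$. For (iii), $\theta_1$ extends to all of $X$, and
\[
\theta_1(\Omega_{j+1})=\theta_1\theta_{-j}(\Omega)=\Omega_j,
\]
so $\tilde H_{j,1}=\theta_1|_{\Omega_{j+1}}$ maps $\Omega_{j+1}$ biholomorphically onto $\Omega_j$. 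The pair $(\Omega_j,\Omega_j)$ is trivially Runge.

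The substantive point is (ii): we must show $(\theta_t(\Omega_j),\Omega_j)$ is Runge. Conjugating by $\theta_{j-1}$, it is enough to show that $\theta_t(\Omega)$ is Runge in $\Omega$ for $t\in[0,1]$. I would use the one-parameter family $\{\theta_s(\Omega)\}_{s\in[0,t]}$. It is decreasing in $s$, each member is Stein, and it varies continuously. The Stein property is preserved under this holomorphic isotopy, so a Runge-type result for continuous isotopies of Stein domains applies: a Stein domain obtained from $\Omega$ through an isotopy of injective holomorphic self-maps starting at the identity is Runge in $\Omega$. This should either be proved earlier in the paper or be a standard fact.

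If that is not available, I would argue directly through holomorphic convexity. Let $K\subset\theta_t(\Omega)$ be compact and let $\widehat K$ denote its $\mathcal O(\Omega)$-hull. Let $s_0$ be the infimum of those $s\in[0,t]$ with $\widehat K\subset\theta_s(\Omega)$. Openness follows from the continuity of the action, and a Kontinuitätssatz/Oka-type argument along the flow gives closedness. Together these show that the hull $\widehat K$ stays inside $\theta_t(\Omega)$. I expect this Runge verification to be the main obstacle.

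Finally, $\mathcal{E}_{\mathcal R}(\Omega_1,W)=\mathcal{E}_{\mathcal R}(\Omega,W)$. If this set is empty the inclusion is vacuous; otherwise Theorem~\ref{Main theorem for embedding Stein manifolds} applies and its case $s=1$ gives
\[
\mathcal{E}_{\mathcal R}(\Omega,W)\subset\overline{\mathcal{E}_{\mathcal R}(X',W)}|_{\Omega},
\]
which is the assertion.
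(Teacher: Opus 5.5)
\textbf{Gap: the Runge property of $(\theta_t(\Omega),\Omega)$ is not established.}

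Your reduction matches the paper's proof. You use the exhaustion $\Omega_j=\theta_{-(j-1)}(\Omega)$ of $X'$, the isotopy $\theta_t$, and the extension $\theta_1|_{\Omega_{j+1}}$ onto $\Omega_j$. You get Steinness of $X'$ from Result~\ref{Stein property of increasing union}, and you correctly observe that everything reduces to one claim: $(\theta_t(\Omega),\Omega)$ is a Runge pair for $t\ge 0$. That claim is the only nontrivial input, and your proposal does not prove it.

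The ``standard fact'' you appeal to is false as stated. Take a Fatou--Bieberbach map $F:\mathbb{C}^2\to\mathbb{C}^2$ with non-Runge image (Wold), normalized so that $F(0)=0$ and $DF(0)=I$. Then $F_\tau(z)=F(\tau z)/\tau$, with $F_0=\mathrm{id}$, is a continuous isotopy of injective holomorphic self-maps of $\mathbb{C}^2$ starting at the identity. Its terminal image is Stein but not Runge. This is why Anders\'en--Lempert theory must assume Runge images along the isotopy rather than derive them. What rescues your situation is that the family $\theta_s(\Omega)$ is nested, but that rescue needs a genuine theorem.

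Your fallback argument does not supply it. With ``infimum'' corrected to ``supremum'', the openness step is fine: by Lemma~\ref{eventual containment of compacts for manifolds}, compacts of $\theta_s(\Omega)$ eventually lie in $\theta_{s'}(\Omega)$ as $s'\downarrow s$. Closedness, however, is the entire difficulty. From $\widehat K\subset\theta_s(\Omega)$ for all $s<s^*$ you only get $\widehat K\subset\bigcap_{s<s^*}\theta_s(\Omega)$. That set can be strictly larger than $\theta_{s^*}(\Omega)$: for the translation flow on a half-space it is the closed half-space. Keeping the hull off $\partial\theta_{s^*}(\Omega)$ is exactly where pseudoconvexity must enter, and naming a Kontinuit\"atssatz does not do that.

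The paper fills this step in Theorem~\ref{Checking Runge pairs for R-actions} using the Docquier--Grauert theorem (Result~\ref{Runge pair by semicontinuous extension}). It shows that $\theta_{s_2}(\Omega)$ is semicontinuously holomorphically extendable to $\theta_{s_1}(\Omega)$ through the Stein domains $\theta_s(\Omega)$, $s_1\le s\le s_2$. The two substantive conditions are:
\begin{itemize}
\item $\bigcup_{u>s}\theta_u(\Omega)=\theta_s(\Omega)$, which follows from Lemma~\ref{eventual containment of compacts for manifolds};
\item $\theta_s(\Omega)$ equals the interior of $\bigcap_{u<s}\theta_u(\Omega)$, proved using the globally defined inverses $\theta_{-u}$ together with the same lemma.
\end{itemize}
Docquier--Grauert then gives exactly the closedness you were missing. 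With that result in hand, the rest of your argument goes through as written.
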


We now discuss the applications of Theorem~\ref{Main theorem for embedding Stein manifolds} in the context of Runge embedding. It was an open question for sometime whether $\mathbb{C}^*\times \mathbb{C}$ admits a holomorphic Runge embedding in $\mathbb{C}^2$, which is equivalent to whether $\mathcal{E}_{\mathcal{R}}(\mathbb{C}^*\times \mathbb{C},\mathbb{C}^2)\neq \emptyset$. Origin of this question traces back to dynamics \cite{BedfordSmillie} by E. Bedford and J. Smillie. It was solved positively by Bracci, Raissy and Stens{\o}nes  in \cite{BracciRaissyStensones}. F. Forstneri\v c and E. F. Wold \cite{ForstnericWoldRungetubes} gave a different proof. Their approach yields a huge class of examples of non-trivial Runge domains in Stein manifolds with the density property.
For any Runge domain $\Omega$ in $\Cpn \times \mathbb{C}$ Serre \cite{Serre} proved that $H^q(\Omega,\mathbb{C})=0,\ q\geq n+1.$ It gives a cohomological obstruction on Runge embedding of $(\mathbb{C}^*)^{n+1}=(\Cpn\times \mathbb{C})\setminus\{(z_1,\dots,z_{n+1}):z_1z_2\dots z_{n+1}=0\}$ in $\mathbb{C}^{n+1}$ as $H^{n+1}((\mathbb{C}^*)^{n+1},\mathbb{C})\neq 0$. Therefore, 
 $(\mathbb{C}^n\times \mathbb{C})\setminus\{g=0\}$ does not admit a Runge embedding in $\mathbb{C}^{n+1}$ for an arbitrary $g\in \mathcal{O}(\mathbb{C}^{n+1})$. 
  Here we present: 
\begin{theorem}\label{cor-Runge embedding of Cn-A x C}
    Let $f\in \mathcal{O}(\Cpn)$ and $A=\{z \in \Cpn: f(z)=0\}$. Then $(\Cpn\setminus A)\times \mathbb{C}$ admits a holomorphic Stein and Runge embedding in 
    $\mathbb{C}^{n+1}$.
\end{theorem}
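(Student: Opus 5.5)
The plan is to deduce the statement from Theorem~\ref{Embedding of Stein manifold using R-actions}, applied with $W=\mathbb{C}^{n+1}$ and $X=(\Cpn\setminus A)\times\cplx$. Since $n+1\geq 2$, $W$ has the density property by Anders\'en--Lempert. Also $X=\{(z,u)\in\mathbb{C}^{n+1}: f(z)\neq 0\}$ is the complement of a hypersurface in $\mathbb{C}^{n+1}$, hence Stein. We may assume $f\not\equiv 0$; otherwise $X$ is empty and there is nothing to prove.

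The key construction is a holomorphic $(\rea,+)$-action on $X$ that contracts in the fibre variable towards the section $u=1/f(z)$, together with an invariant Stein domain that is visibly Runge in $\mathbb{C}^{n+1}$. Define
\[
\theta_t(z,u)=\Bigl(z,\ e^{-t}\bigl(u-\tfrac{1}{f(z)}\bigr)+\tfrac{1}{f(z)}\Bigr),\qquad (z,u)\in X,\ t\in\rea .
\]
Each $\theta_t$ is a holomorphic automorphism of $X$, and $\theta_t\circ\theta_s=\theta_{t+s}$. The action is continuous in $(t,z,u)$. It satisfies
\[
f(z)\,\pi_2\theta_t(z,u)-1=e^{-t}\bigl(uf(z)-1\bigr),
\]
where $\pi_2$ denotes projection to the last coordinate.

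Let $\Omega=\{(z,u)\in\mathbb{C}^{n+1}: |uf(z)-1|<1/2\}$. On $\Omega$ we automatically have $f(z)\neq 0$, so $\Omega\subset X$. Moreover $\Omega$ is connected, because the map $(z,u)\mapsto(z,uf(z)-1)$ is a biholomorphism of $X$ onto $(\Cpn\setminus A)\times\{|v|<1/2\}$.

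The identity above shows that $\Omega$ is invariant under $\theta_t$ for $t\geq0$. It also shows that $\theta_{-t}(\Omega)=\{(z,u)\in X: |uf-1|<e^{t}/2\}$, so $X'=\bigcup_{t\geq0}\theta_{-t}(\Omega)=X$.

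Next I check that the inclusion $\iota:\Omega\hookrightarrow\mathbb{C}^{n+1}$ lies in $\mathcal{E}_{\mathcal{R}}(\Omega,\mathbb{C}^{n+1})$. The domain $\Omega$ is a sublevel set $\{|g|<1/2\}$ of the entire function $g(z,u)=uf(z)-1$. Such sublevel sets are Stein and Runge in $\mathbb{C}^{n+1}$. To see this, note that $\Omega$ is exhausted by the sets $\{|g|\leq r,\ |(z,u)|\leq R\}$, which are $\mathcal{O}(\mathbb{C}^{n+1})$-convex. Then every holomorphic function on $\Omega$ can be approximated by entire functions; for instance, use the standard fact that an analytic polyhedron defined by entire functions is Runge, or the Oka--Weil theorem on these compacta. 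This step is the only one needing a genuine argument, and it is standard.

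Theorem~\ref{Embedding of Stein manifold using R-actions} now gives
\[
\iota\in\mathcal{E}_{\mathcal{R}}(\Omega,\mathbb{C}^{n+1})\subset\overline{\mathcal{E}_{\mathcal{R}}(X,\mathbb{C}^{n+1})}|_{\Omega}.
\]
Since the left-hand set is nonempty, $\mathcal{E}_{\mathcal{R}}(X,\mathbb{C}^{n+1})\neq\emptyset$. That is, there is an injective holomorphic map $\Psi:X\to\mathbb{C}^{n+1}$ with Runge image. Its image is biholomorphic to the Stein manifold $X$, so it is a Stein Runge domain, which proves the theorem.

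The main obstacle I anticipate is choosing the action and domain so that three conditions hold at once: $\Omega$ must be Runge in $\mathbb{C}^{n+1}$, forward-invariant, and have backward saturation equal to all of $X$. The naive choices fail. For example, the image of $(z,w)\mapsto(z,f(z)w)$ is the non-Runge set $\{f\neq0\}$. The affine contraction towards the graph of $1/f$ resolves this, because it turns $|uf-1|$ into a Lyapunov function of the action.
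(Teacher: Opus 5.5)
Your proof is correct. It follows the same overall strategy as the paper: apply Theorem~\ref{Embedding of Stein manifold using R-actions} with $W=\cplx^{n+1}$ to a fibrewise contracting $(\mathbb{R},+)$-action on $(\Cpn\setminus A)\times\cplx$.

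In fact your action is exactly the paper's action $(z,z')\mapsto(z,e^{-t}z')$ conjugated by the shear $\Phi(z,z')=(z,z'+1/f(z))$, which is an automorphism of $X$. In the paper's coordinates your domain $\{|uf-1|<1/2\}$ becomes the Hartogs-type domain $\{|f(z)z'|<1/2\}$. This has the same shape as the paper's $D=\{|z'|^2e^{\phi(\rho(z))}<1\}$.

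The genuine difference is how the invariant domain with Runge image is produced.
\begin{itemize}
\item The paper first uses Lemma~\ref{Lemma- Runge neighborhood of a variety}, which is built from a plurisubharmonic exhaustion, to get a Stein Runge neighbourhood $\Omega'$ of $S=\{f(w)w'=1\}$ inside $\Phi(X)$. A second plurisubharmonic construction then shrinks this to an invariant $D$ with $\Phi(D)\subset\Omega'$. Runge-ness is checked through the chain of pairs $(D,\Omega)$, $(\Phi(D),\Omega')$, $(\Omega',\cplx^{n+1})$.
\item You instead take a sublevel set of the single entire function $g=uf-1$. Invariance is immediate because $|g\circ\theta_t|=e^{-t}|g|$. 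Containment in $X$ is automatic because the level $1/2$ lies below the value $|g|=1$ taken on $\{f=0\}$. Runge-ness follows directly from Oka--Weil on the polynomially convex compacta $\{|g|\le r\}\cap\overline{B}_R$.
\end{itemize}
Your route makes the auxiliary lemma and both plurisubharmonic constructions unnecessary. What the paper's lemma buys in return is a Runge neighbourhood inside an arbitrary prescribed neighbourhood of $S$, which is more than this theorem needs.

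One small slip: in the connectedness argument, the map $(z,u)\mapsto(z,uf(z)-1)$ carries $\Omega$, not $X$, onto $(\Cpn\setminus A)\times\{|v|<1/2\}$.
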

   
   \noindent Note that, for $n\geq2$, the normal bundle associated to a holomorphic embedding $\psi:\Cpn\setminus A \rightarrow \mathbb{C}^{n+1}$, which has $\mathcal{O}(\mathbb{C}^{n+1})$-convex image, is not, in general, trivial. Hence, the above theorem does not follow from \cite{ForstnericWoldRungetubes}. Although, if $n=1$, it directly follows from \cite[Corollary~1.2]{ForstnericWoldRungetubes}. 
   
   Next we find domains $\Omega\subset\Cpn$ for which $\mathcal{E}_{\mathcal{R}}(\Omega,\Cpn)\subset \overline{Aut(\Cpn)}|_{\Omega}$ in $\mathcal{E}(\Omega,\Cpn)$ can be shown by a different method. This allows us to get some other results on Runge embeddings and approximation of injective holomorphic maps. We first consider the Runge domains in $\Cpn$ with some intrinsic property on it to get our desired approximation results. 

\begin{theorem}\label{Starshaped}
    Let $\Omega$ be a domain in $\Cpn$, $n\geq 2$. Assume that there exists a continuous isotopy of biholomorphism $H: [0,1]\times \Omega\rightarrow \Omega$, such that $H(0,z)=z$ and $H$ satisfies that $H_1(\Omega)\subset D$, where $D$ is a star-shaped domain in $\Omega$. Then, $\mathcal{E}(\Omega,\Cpn)$ is path connected.\\
     Additionally, if we assume $H_t(\Omega)$ Runge in $\Cpn$ for each $t\in[0,1]$, then $\mathcal{E}_{\mathcal{R}}(\Omega,\Cpn)$ is path connected and $\mathcal{E}_{\mathcal{R}}(\Omega,\Cpn)\subset\overline{Aut(\Cpn)}|_{\Omega}$ in $\mathcal{E}(\Omega,\Cpn)$.
     \end{theorem}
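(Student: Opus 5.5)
The plan is to join any embedding to a linear map in two stages: first shrink the domain by the isotopy $H$, then use the star-shaped domain $D$ to deform to the differential at a point. For path connectedness of $\mathcal{E}(\Omega,\Cpn)$, fix $F\in\mathcal{E}(\Omega,\Cpn)$. We first join $F$ to $F\circ H_1$ by the path $t\mapsto F\circ H_t$. This is a continuous path in $\mathcal{E}(\Omega,\Cpn)$, because $H$ is a continuous isotopy of injective holomorphic maps and composition is continuous in the compact open topology.

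Next, $F\circ H_1=(F|_D)\circ H_1$, and $F|_D$ is an injective holomorphic map on the star-shaped domain $D$. Let $p$ be a star center of $D$. The Alexander-type isotopy
\[ G_s(z)=\tfrac{1}{s}\bigl(F(p+s(z-p))-F(p)\bigr)+F(p), \quad s\in(0,1], \qquad G_0(z)=F(p)+dF_p(z-p), \]
is a continuous path of injective holomorphic maps on $D$. It joins $F|_D$ to the affine map $A_F=F(p)+dF_p(\cdot-p)$. Composing with $H_1$, the path $s\mapsto G_s\circ H_1$ joins $F\circ H_1$ to $A_F\circ H_1$ inside $\mathcal{E}(\Omega,\Cpn)$.

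It remains to join affine maps. Since $GL_n(\cplx)$ is path connected, any two affine injective maps can be joined, and hence any $A_F\circ H_1$ can be joined to $A_{F'}\circ H_1$. Concatenating these paths gives path connectedness. The one point needing care is continuity of $G_s$ as $s\to0$, uniformly on compacts of $D$; this follows from the Taylor expansion. The ambiguity in choosing the center $p$ is harmless.

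For the Runge part, the claim that $\mathcal{E}_{\mathcal{R}}(\Omega,\Cpn)$ is path connected requires each map along the path to have Runge image. The image of $F\circ H_t$ is $F(H_t(\Omega))$. Here $H_t(\Omega)$ is Runge in $\Cpn$, hence Runge in $\Omega$. Since $F$ is a biholomorphism of $\Omega$ onto the Runge domain $F(\Omega)$, the set $F(H_t(\Omega))$ is Runge in $F(\Omega)$, hence in $\Cpn$ by transitivity. Along the second leg, the image of $G_s\circ H_1$ is an affine image of $F(\Omega_s')$, where $\Omega_s'=p+s(H_1(\Omega)-p)$. This set is Runge in $D_s=p+s(D-p)$, since it is a scaled copy of $H_1(\Omega)$. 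We also need $F(D_s)$ to be Runge in $\Cpn$. Here I would use Andersén–Lempert: $F|_D$ with Runge image on the star-shaped domain $D$ extends along the dilation isotopy, and $F(D_s)$ is Runge in $F(\Omega)$ because $D_s$ is Runge in $\Omega$. The latter holds since a star-shaped domain is Runge in $\Cpn$, and then a fortiori in $\Omega$. Transitivity then gives Runge images throughout, with affine maps preserving Runge.

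For the approximation statement, the path $F_t$ above lies in $\mathcal{E}_{\mathcal{R}}(\Omega,\Cpn)$, and every $F_t(\Omega)$ is Runge. By the Andersén–Lempert–Forstnerič–Rosay theorem, an isotopy of injective holomorphic maps with Runge images starting at an automorphism (the affine end) can be approximated locally uniformly by automorphisms at every time. Running it from the affine end to $F$ gives $F\in\overline{Aut(\Cpn)}|_{\Omega}$.

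The main obstacle I expect is the Runge bookkeeping in the second leg: guaranteeing that $F(D_s)$ and $F(\Omega'_s)$ are Runge in $\Cpn$ uniformly in $s$. This step relies on the assumption that $F(\Omega)$ is Runge together with the Runge-ness of star-shaped domains in $\Omega$.
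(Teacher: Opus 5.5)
The path-connectedness part is essentially the paper's argument. The paper's path $\gamma$ also shrinks by $H$, applies the Alexander trick on $D$ composed with $H_1$, and ends at a linear map, and your handling of the normalization through $GL_n(\mathbb{C})$ is fine. The Runge part, however, has two defects as written.

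\emph{The approximation step.} The path you hand to the Forstneri\v{c}--Rosay theorem (Result \ref{Forstneric Rosay continuous isotopy}) runs from $F$ through $F\circ H_1$ to $A_F\circ H_1$. Its ``affine end'' is therefore $A_F\circ H_1$, which is not an automorphism of $\Cpn$. The theorem only transports approximability along an isotopy whose initial map is already known to be approximable by automorphisms. Knowing that $H_1$ (hence $A_F\circ H_1$) is approximable is exactly the first of the paper's two applications of Result \ref{Forstneric Rosay continuous isotopy}. There it is applied to $t\mapsto H_t$, starting at $H_0=\mathrm{Id}$ on the Runge domain $\Omega=H_0(\Omega)$, using that every $H_t(\Omega)$ is Runge. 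Only afterwards does the paper run the second isotopy, $\tilde H_t$ from $2H_1$ to $F$. An equivalent repair in your setup is to append the leg $t\mapsto A_F\circ H_{1-t}$, whose images $A_F(H_{1-t}(\Omega))$ are Runge, so that the path really ends at the affine automorphism $A_F$. Alternatively, take $F'=\mathrm{Id}$ in your concatenation.

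\emph{The Runge bookkeeping on the Alexander leg.} You route this through $F(D_s)$ and the claim that a star-shaped domain is Runge in $\Cpn$. Nothing in the hypotheses gives this, since $D$ is not assumed pseudoconvex. The appeal to Anders{\'e}n--Lempert is circular: that theorem presupposes Runge-ness of the dilated images rather than producing it. None of this is needed:
\begin{itemize}
\item $\Omega'_s=p+s(H_1(\Omega)-p)$ is the image of the Runge domain $H_1(\Omega)$ under the affine automorphism $z\mapsto p+s(z-p)$, hence Runge in $\Cpn$.
\item $\Omega'_s\subset D\subset\Omega$ by star-shapedness, so $(\Omega'_s,\Omega)$ is a Runge pair.
\item By Lemma \ref{Rungepair biholomorphic invariant}, $F(\Omega'_s)$ is Runge in the Runge domain $F(\Omega)$, and therefore in $\Cpn$.
\end{itemize}
This holds for each $s\in(0,1]$ separately, and Result \ref{Forstneric Rosay continuous isotopy} needs no uniformity in $s$. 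At $s=0$ the image $A_F(H_1(\Omega))$ is Runge directly. This is precisely the paper's argument that $E_t(\Omega)=2(1-t)H_1(\Omega)$ is Runge.
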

     Note that, it differs from Theorem \ref{cor-Approximation on action invariant domains of Cn} as we do not need a global action on $\Cpn$ and invariant domains in this case.
    As a consequence of this result, we get several classes of examples of such domains $\Omega\subset\Cpn$ where approximation by automorphisms holds.
    
    Any  domain in $\mathbb{C}$ is simply connected if and only if it admits a $\mathbb{R}_+$-complete holomorphic vector field with a unique globally asymptotically stable equilibrium point if and only if it is Runge in $\mathbb{C}$, thanks to  Riemann mapping theorem and Runge approximation theorem.
    But for $n\geq 2$, a domain in $\Cpn$ that admits a $\mathbb{R}_+$-complete holomorphic vector field with a unique globally asymptotically stable equilibrium point,
      need not  be Runge in $\Cpn$. 
      It is also not known whether they are biholomorphic to  Runge domains in $\Cpn$. We get an answer from the following theorem, more generally for Stein manifolds.
    \begin{theorem}\label{Runge embedding of Stein manifolds which admits a R+complete holomorphic vector field $V$ which has a unique equilibrium point that is globally asymptotically stable}
    Let $X$ be a Stein manifold of dimension $n\geq 2$. $X$ admits a $\mathbb{R}_+$-complete holomorphic vector field $V$ which has a unique equilibrium point that is globally asymptotically stable. Then $X$ admits a holomorphic Stein and Runge embedding in $\Cpn$. 
\end{theorem}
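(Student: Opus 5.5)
Let $\phi_t$ denote the forward flow of $V$, defined for all $t\ge 0$ because $V$ is $\mathbb{R}_+$-complete, and let $p$ be the globally asymptotically stable equilibrium. The plan is to apply Theorem~\ref{Main theorem for embedding Stein manifolds} with $W=\Cpn$, which has the density property for $n\ge 2$. The exhaustion will be built from a small coordinate ball around $p$ pulled back by the flow.

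First I construct the exhaustion. Since $p$ is asymptotically stable, there is a neighbourhood basis of $p$ that is forward invariant. I will try to take $U$ to be a small ball in a holomorphic chart $\varphi$ centred at $p$. The ball should be chosen, for example, as a sublevel set of a strictly plurisubharmonic Lyapunov function near $p$, such as a suitably modified $|\varphi|^2$ when the linear part $DV(p)$ has eigenvalues with negative real part. Such a $U$ is Stein, satisfies $\phi_t(U)\subset U$ for $t\ge0$, and $\phi_t(U)$ is Runge in $U$. The Runge property holds because $\phi_t(U)$ is the sublevel set of the plurisubharmonic exhaustion $\rho\circ\phi_t^{-1}$ of itself, or alternatively because these sets are nested Stein domains obtained by a flow. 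Global attraction gives $X=\bigcup_{t\ge0}\phi_t^{-1}(U)$. I then set $\Omega_j=\{x:\phi_j(x)\in U\}$. Each $\Omega_j$ is biholomorphic to $\phi_j(\Omega_j)=U\cap\phi_j(X)$. Since $\phi_j$ is injective, $\Omega_j\cong U\cap \phi_j(X)$ is an intersection of Stein open sets, so it is Stein, and the $\Omega_j$ increase to $X$.

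Next I check hypotheses (i)--(iii). The isotopy is $H_j(t,x)=\phi_t(x)$ on $\Omega_j$, which maps $\Omega_j$ into itself by forward invariance. Its terminal map $\phi_1$ extends to $\Omega_{j+1}$ and maps it into $\Omega_j$. The Runge conditions (ii) and (iii) follow after conjugating by $\phi_j$: they reduce to $(\phi_t(V_j),V_j)$ being a Runge pair, where $V_j=\phi_j(\Omega_j)$. For this I would use that $\Omega_j$ is Stein and that $\phi_t(\Omega_j)$ is a sublevel set of the plurisubharmonic function $\rho\circ\phi_{j}\circ\phi_t^{-1}$ inside $\Omega_j$.

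Finally, I need $\mathcal{E}_{\mathcal{R}}(\Omega_1,\Cpn)\neq\emptyset$. Conjugating by $\phi_1$, $\Omega_1$ is biholomorphic to $\phi_1(X)\cap U\subset U$. If I can arrange $\Omega_1$ itself to be $U$, for instance by reindexing so that the first stage is a ball in the chart, then $\varphi(U)$ is a ball in $\Cpn$, which is Runge. Theorem~\ref{Main theorem for embedding Stein manifolds} then yields a Runge embedding $\Psi:X\to\Cpn$. Its image is Runge in $\Cpn$ and hence Stein.

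The main obstacle is producing $U$ with all properties at once: a Stein chart domain, forward invariant, and with the flow images Runge in it. This is delicate when $DV(p)$ is not hyperbolic, and also when $p$ is only asymptotically stable rather than exponentially attracting. In that case I would try to replace the chart ball with $\bigcup$-type constructions, or appeal to a plurisubharmonic Lyapunov function obtained by averaging $\rho\circ\phi_t$ over $t\in[0,T]$.
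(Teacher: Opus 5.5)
Your strategy is sound and differs from the paper's. The paper exhausts $X$ by the relatively compact invariant sets $\Omega_j=\bigcup_{t\ge0}\theta_t(K_j^\circ)$. It shows each stage has a Runge embedding by flowing it into a chart ball that sits inside a larger $\Phi$-like domain. Stage-to-stage approximation then comes from Corollary~\ref{Corollary need to prove discrete union embedding problem}, which rests on Forstneri\v{c}--Rosay in $\Cpn$ via Theorem~\ref{Starshaped}. It concludes with Theorem~\ref{Union embedding in domain with Density}, which never needs the stages to be Stein.

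Your exhaustion $\Omega_j=\phi_j^{-1}(U)$ makes Stein-ness and $\phi_1(\Omega_{j+1})\subset\Omega_j$ automatic. The flow is then itself the compressing isotopy, and Theorem~\ref{Main theorem for embedding Stein manifolds} applies directly. The price is that you need a forward-invariant Stein chart neighbourhood $U$, which the paper's ball-inside-a-$\Phi$-like-domain trick avoids. You should also record that $\Omega_j$ is connected: it is forward invariant, and every forward orbit tends to $p\in\Omega_j$.

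As written, though, the proof has two genuine gaps.

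\textbf{Gap 1: the construction of $U$.} You leave it open, and your fallbacks are not carried out. The missing observation is that the ``delicate'' case cannot occur for holomorphic flows. By Lyapunov stability, pick a connected $U_p$ with $\phi_t(U_p)$ inside a fixed chart ball for all $t\ge0$. By Montel--Vitali, the pointwise convergence $\phi_t\to p$ is then locally uniform. Hence $D\phi_t(p)=e^{tA}\to 0$ with $A=DV(p)$, so every eigenvalue of $A$ has negative real part. Now solve $A^*P+PA=-I$ with $P>0$. The derivative of $z^*Pz$ along $V$ is $-|z|^2+O(|z|^3)$. So small ellipsoids $U=\{z^*Pz<\epsilon\}$ in the chart are forward invariant and convex, hence Stein and Runge in $\Cpn$.

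\textbf{Gap 2: the Runge-pair justifications.} They do not work.
\begin{itemize}
\item Saying ``$\phi_t(U)$ is a sublevel set of an exhaustion of itself'' tells you nothing about $\phi_t(U)$ inside $U$.
\item The function $\rho\circ\phi_j\circ\phi_t^{-1}$ is defined only on $\phi_t(X)$, not on $\Omega_j$. So the psh-sublevel criterion cannot be applied in $\Omega_j$.
\item Condition (iii) does not reduce to $(\phi_t(V_j),V_j)$. Conjugating by $\phi_j$ sends $\phi_1(\Omega_{j+1})$ to $\phi_{j+1}(\Omega_{j+1})=V_{j+1}$. This is strictly larger than $\phi_1(V_j)=\phi_{j+1}(\Omega_j)$ whenever $\Omega_{j+1}\neq\Omega_j$, and Runge-ness of the smaller set gives nothing for $V_{j+1}$.
\end{itemize}

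The right tool is the Hamada--Iancu--Kohr results quoted in the paper. In the chart, $V_j=U\cap\phi_j(X)$ is a connected forward-invariant domain attracted to $p$, i.e.\ $\Phi$-like with $\Phi=-V$.
\begin{itemize}
\item Result~\ref{Rungepairinsemigroups} gives $(\phi_t(V_j),V_j)$ Runge, which is (ii).
\item Result~\ref{Rungepairphilikedomains}, applied to the invariant subdomain $V_{j+1}\subset V_j$, gives (iii).
\end{itemize}
Both then pull back to $\Omega_j$ via Lemma~\ref{Rungepair biholomorphic invariant}. The same result makes $V_1$ Runge in the convex set $U$, hence in $\Cpn$. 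So $\mathcal{E}_{\mathcal{R}}(\Omega_1,\Cpn)\neq\emptyset$ with no reindexing.

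With these two repairs your argument goes through.
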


The following theorem gives us another Runge embedding theorem for an increasing union of Stein manifolds into $\Cpn$. It generalizes  \cite[Theorem~3.4]{ArosioBracciWold}, \cite[Theorem~4.5]{Hamada}, \cite[Theorem~5.1,V5]{ChatterjeeGorai} substantially.
\begin{theorem}\label{Runge embedding of increasing sequence of Stein domains}
    Let $X$ be a complex manifold of dimension $n\geq 2$. Let $\{\Omega_j\}_{j\in \mathbb{N}}$ be a family of nonempty Stein domains of $X$ such that 
    \[
    \Omega_1\subset \hdots \subset \Omega_j\subset \hdots \subset\cup_{j\in \mathbb{N}}\Omega_j=X.
    \]
    Suppose that 
    \begin{itemize}
  \item [(i)]  each $(\Omega_j,\Omega_{j+1})$ is a Runge pair $(j\in \mathbb{N})$.
  
    \item [(ii)] each $\Omega_j$ admits a $\mathbb{R}_+$-complete holomorphic vector field with a unique equilibrium point which is globally asymptotically stable on $\Omega_j$ $(j\in \mathbb{N})$.
    \end{itemize}
    Then, $X$ admits a Stein and Runge embedding into $\Cpn$; and furthermore, $\mathcal{E}_{\mathcal{R}}(\Omega_j,\Cpn)\subset\overline{\mathcal{E}_{\mathcal{R}}(X,\Cpn)}|_{\Omega_j}$ in $\mathcal{E}(\Omega_j,\Cpn)$ for any $j\in \mathbb{N}$.
    \end{theorem}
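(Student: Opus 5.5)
We reduce to Theorem~\ref{Main theorem for embedding Stein manifolds} with $W=\Cpn$, which has the density property for $n\geq 2$. Two things must be checked: $\mathcal{E}_{\mathcal{R}}(\Omega_1,\Cpn)\neq\emptyset$, and each $\Omega_j$ carries an isotopy $H_j$ with properties (i)--(iii). One subtlety is that Theorem~\ref{Main theorem for embedding Stein manifolds} assumes the ambient $X$ is Stein. Since $X$ is an increasing union of Stein domains with $(\Omega_j,\Omega_{j+1})$ Runge, we expect $X$ to be Stein; this is a Behnke--Stein type statement. Alternatively, we rerun the proof of Theorem~\ref{Main theorem for embedding Stein manifolds}, which presumably uses Steinness only of the $\Omega_j$. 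Either way, $X$ will be Stein a posteriori once it is embedded onto a Runge domain of $\Cpn$.

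\emph{Nonemptiness.} Applying Theorem~\ref{Runge embedding of Stein manifolds which admits a R+complete holomorphic vector field $V$ which has a unique equilibrium point that is globally asymptotically stable} to $\Omega_1$, using hypothesis (ii), gives a Runge embedding $\Omega_1\hookrightarrow\Cpn$. So $\mathcal{E}_{\mathcal{R}}(\Omega_1,\Cpn)\neq\emptyset$.

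\emph{The isotopies.} Fix $j$. Let $V_j$ be the $\mathbb{R}_+$-complete field on $\Omega_j$ with globally asymptotically stable equilibrium $p_j$, and let $\phi^j_t$ ($t\geq0$) be its forward flow. Each $\phi^j_t$ is an injective holomorphic self-map of $\Omega_j$. We set $H_j(t,z)=\phi^j_{Tt}(z)$ for a large time $T$ to be chosen. Property (i) is immediate.

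For (ii), we need $\phi^j_s(\Omega_j)$ to be Runge in $\Omega_j$. The image of a Stein manifold under the time-$s$ map of a complete forward flow is Runge; this is the standard fact (as in \cite{ChatterjeeGorai, Forstneric2025}) that the flow domain $\phi_s(\Omega_j)$ is exhausted by $\mathcal{O}(\Omega_j)$-convex sets. Concretely, a function on $\phi_s(\Omega_j)$ pulled back by $\phi_s$ can be approximated through the flow, or one uses the Andersén--Lempert/Forstneri\v c--Rosay statement that images of isotopies of Runge domains stay Runge. We cite this.

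For (iii), we first use global asymptotic stability: $p_j$ is attracting, so for $T$ large, $\phi^j_T(\Omega_j)$ lies in a small coordinate ball around $p_j$. The real difficulty is to extend $\phi^j_T$ to an injective map $\tilde H_{j,1}:\Omega_{j+1}\to\Omega_j$ whose image is Runge. The flows $\phi^j$ and $\phi^{j+1}$ are unrelated, so the natural extension is not by the flow itself. Instead we compose. By Theorem~\ref{Runge embedding of Stein manifolds which admits a R+complete holomorphic vector field $V$ which has a unique equilibrium point that is globally asymptotically stable} there is a Runge embedding $G:\Omega_{j+1}\to\Cpn$, and by a Fatou--Bieberbach/ball argument we may shrink $G(\Omega_{j+1})$ into a small ball $B$. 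However, $G|_{\Omega_j}$ need not equal $\phi^j_T$. So we instead change the isotopy: we take $H_j$ to be the flow for $t\in[0,1/2]$, followed on $[1/2,1]$ by an isotopy inside the small ball $B\subset\Omega_j$ joining $\phi^j_T$ to $\iota\circ G|_{\Omega_j}$. Here $\iota$ places $B$ into the chart at $p_j$. Path-connectedness of $\mathcal{E}_{\mathcal{R}}(\Omega_j,\Cpn)$ from Theorem~\ref{Starshaped} (applied with $D$ a ball) supplies such a Runge isotopy, after rescaling into $B$. The terminal map $\iota\circ G$ then extends to $\Omega_{j+1}$. Its image is Runge in $B$ and hence, since balls near $p_j$ are Runge in $\Omega_j$ for a Stein $\Omega_j$ in a suitable holomorphically convex chart, Runge in $\Omega_j$.

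\emph{Main obstacle.} The delicate point is this interpolation in (iii): keeping every intermediate image Runge in $\Omega_j$ while staying inside $\Omega_j$. We would handle it by working in the Runge embedded picture $F(\Omega_j)\subset\Cpn$, where star-shaped sets and balls are available, and invoking Theorem~\ref{Starshaped}. Once (i)--(iii) hold, Theorem~\ref{Main theorem for embedding Stein manifolds} yields both the Runge embedding of $X$ and the approximation statement $\mathcal{E}_{\mathcal{R}}(\Omega_j,\Cpn)\subset\overline{\mathcal{E}_{\mathcal{R}}(X,\Cpn)}|_{\Omega_j}$.
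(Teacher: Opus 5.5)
There is a genuine gap. Your reduction to Theorem~\ref{Main theorem for embedding Stein manifolds} breaks down at hypothesis (iii), and the obstruction is structural, not technical: under the assumptions of this theorem, (iii) can be impossible to satisfy. Take $X=\mathbb{C}^n$, $\Omega_1$ the unit ball $B$, and $\Omega_j=\mathbb{C}^n$ for $j\geq 2$. Hypotheses (i) and (ii) hold, with the field $-z$. Yet (iii) for $j=1$ demands an injective holomorphic map $\mathbb{C}^n\to B$, which Liouville's theorem rules out.

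This example exposes two false steps in your construction of (iii).
\begin{itemize}
\item Global asymptotic stability does not imply that $\phi^j_T(\Omega_j)$ lies in a small coordinate ball for large $T$. Convergence to $p_j$ is uniform only on compact subsets. For $\Omega_j=\mathbb{C}^n$ and $V=-z$, every $\phi_T(z)=e^{-T}z$ is surjective.
\item No ``Fatou--Bieberbach/ball argument'' can map $\Omega_{j+1}$ injectively into a bounded ball when $\Omega_{j+1}$ contains a nonconstant entire curve. Fatou--Bieberbach domains are never bounded.
\end{itemize}
So no interpolation inside $B$ can rescue the plan. In general, the extension to $\Omega_{j+1}$ cannot come from a self-map $\Omega_{j+1}\to\Omega_j$.

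The missing idea is to bypass isotopies entirely, as the paper does (see the remark after the theorem). Apply Theorem~\ref{Union embedding in domain with Density} directly. Its only nontrivial hypothesis is $\mathcal{E}_{\mathcal{R}}(\Omega_j,\Cpn)\subset\overline{\mathcal{E}_{\mathcal{R}}(\Omega_{j+1},\Cpn)}|_{\Omega_j}$, which is Corollary~\ref{Corollary need to prove discrete union embedding problem}. The argument runs as follows.
\begin{itemize}
\item Fix a Runge embedding $\phi$ of the Stein manifold $\Omega_{j+1}$. It comes from the same embedding theorem for Stein manifolds with a globally attracting $\mathbb{R}_+$-complete field that you used for $\Omega_1$.
\item By (i), $\phi(\Omega_j)$ is Runge in $\Cpn$, and it carries the pushed-forward field $\phi_*V_j$.
\item Corollary~\ref{cor-Approximation by automorphism-on-FID} then approximates $F\circ\phi^{-1}$ by automorphisms $\Psi_k$ of $\Cpn$.
\item The maps $\Psi_k\circ\phi$ lie in $\mathcal{E}_{\mathcal{R}}(\Omega_{j+1},\Cpn)$ and converge to $F$ on $\Omega_j$.
\end{itemize}
Here the extension comes from a fixed embedding of the larger domain composed with automorphisms of $\Cpn$. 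Flowing into a small ball is used only on the relatively compact invariant subdomains of Lemma~\ref{properties of invariant open exhaustion}, where it is valid.

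Your nonemptiness step, and obtaining Steinness of $X$ from Result~\ref{Stein property of increasing union}, are fine as they stand.
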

    \begin{remark}
    Here we do not need to consider isotopies as in Theorem \ref{Main theorem for embedding Stein manifolds} rather we need an intrinsic property of each Stein manifold in terms of vector fields given in assumption $(ii)$, and relative Runge property given in assumption $(i)$.
    \end{remark}
Next we present one more theorem of similar flavour. Here the method of proof of above theorem does not work.
      \begin{theorem}\label{thm-semigroupAction}
         Let $X$ be a Stein manifold of dimension $n\geq 2$ and $\theta:\mathbb{R}_+\times X\rightarrow X$ be a continuous semigroup action of $(\mathbb{R}_+,+)$ such that for each $t\geq 0$, $\theta_t=\theta(t,\cdot):X\rightarrow X  $ is holomorphic. Let $\Omega\subset X$ be a Stein domain that is invariant under $\theta_t \ (\forall t\geq0)$ and each orbit $\{\theta_t(z):t\in \mathbb{R}_+\}(z\in X)$ intersects $\Omega$. Let $W$ be a Stein manifold of dimension $n$ with the density property. Then  $\mathcal{E}_{\mathcal{R}}(\Omega,W)\subset \overline{\mathcal{E}_{\mathcal{R}}(X,W)}|_{\Omega}$ in $\mathcal{E}(\Omega,W)$. 
        
     \end{theorem}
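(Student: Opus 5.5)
The plan is to deduce Theorem~\ref{thm-semigroupAction} from Theorem~\ref{Main theorem for embedding Stein manifolds}, applied to the exhaustion of $X$ by the pullbacks of $\Omega$ under the integer-time maps of the semigroup. If $\mathcal{E}_{\mathcal{R}}(\Omega,W)=\emptyset$ there is nothing to prove, so fix some $F\in\mathcal{E}_{\mathcal{R}}(\Omega,W)$.

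\emph{Step 1: the exhaustion.} Set $\Omega_1=\Omega$ and $\Omega_j=\theta_{j-1}^{-1}(\Omega)$ for $j\geq 1$. These sets have the following properties.
\begin{itemize}
\item[(a)] Each $\Omega_j$ is open, being the preimage of an open set.
\item[(b)] The sequence is increasing: if $\theta_{j-1}(x)\in\Omega$, then invariance gives $\theta_j(x)=\theta_1(\theta_{j-1}(x))\in\Omega$.
\item[(c)] The union is $X$: if $\theta_t(x)\in\Omega$ for some $t\geq 0$, then $\theta_{m}(x)\in\Omega$ for every integer $m\geq t$, again by invariance.
\item[(d)] Each $\Omega_j$ is connected: any $x\in\Omega_j$ is joined to $\theta_{j-1}(x)\in\Omega$ by the path $s\mapsto\theta_s(x)$, $s\in[0,j-1]$. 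This path stays in $\Omega_j$, since $\theta_{j-1}(\theta_s(x))=\theta_s(\theta_{j-1}(x))\in\Omega$. Moreover $\Omega\subset\Omega_j$ is connected.
\item[(e)] Each $\Omega_j$ is Stein. If $\varphi$ is a continuous psh exhaustion of $\Omega$ and $\rho$ one of $X$, then $\max(\varphi\circ\theta_{j-1},\rho)$ is a psh exhaustion of $\Omega_j$.
\end{itemize}

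\emph{Step 2: injectivity.} I need each $\theta_t$ to be injective. I would argue that a continuous one-parameter semigroup of holomorphic self-maps of a complex manifold is generated by a forward-complete holomorphic vector field $V$. Concretely, $\theta_t$ is close to the identity on compacts for small $t$, and one differentiates at $t=0$ as in the Berkson--Porta argument. Then $\theta_t$ is the time-$t$ flow of $V$, and uniqueness of solutions of ODEs run backward gives injectivity. Given this, $\theta_{j-1}$ maps $\Omega_j$ injectively into $\Omega$, and its image $\theta_{j-1}(\Omega_j)=\Omega\cap\theta_{j-1}(X)$ is Stein.

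\emph{Step 3: the isotopies.} Define $H_j(t,z)=\theta_t(z)$ for $t\in[0,1]$ and $z\in\Omega_j$. This map lands in $\Omega_j$ because the maps $\theta$ commute and $\Omega$ is invariant. Hypothesis (i) of Theorem~\ref{Main theorem for embedding Stein manifolds} holds since $\theta_0=\mathrm{id}$. For hypothesis (iii), $H_{j,1}=\theta_1$ extends to $\tilde H_{j,1}=\theta_1|_{\Omega_{j+1}}$, which maps $\Omega_{j+1}$ into $\Omega_j$ and is injective by Step 2.

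The condition $\mathcal{E}_{\mathcal{R}}(\Omega_1,W)\neq\emptyset$ holds trivially, since $F$ lies in it. Once all hypotheses are checked, Theorem~\ref{Main theorem for embedding Stein manifolds} with $s=1$ gives $F\in\overline{\mathcal{E}_{\mathcal{R}}(X,W)}|_{\Omega}$, which is the claim.

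\emph{Step 4 (main obstacle): the Runge conditions.} It remains to verify hypothesis (ii), that $\theta_t(\Omega_j)$ is Runge in $\Omega_j$ for $t\in[0,1]$, and the Runge part of (iii), that $\theta_1(\Omega_{j+1})$ is Runge in $\Omega_j$. I expect these two conditions to be the hard part.

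I would reduce to compacts. Take an $\mathcal{O}(\Omega_j)$-convex compact $K\subset\Omega_j$ and show that $K_t=\theta_t(K)$ stays $\mathcal{O}(\Omega_j)$-convex for all $t\geq 0$. The set of such $t$ contains $0$. It is open, because the flow is a holomorphic isotopy defined on a neighbourhood of $K_t$ inside the Stein manifold $\Omega_j$, so the Forstneri\v{c}--Rosay stability of polynomial/holomorphic convexity under small isotopies applies. It is closed, by a hull-continuity argument.

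Then exhaust $\theta_t(\Omega_j)$ by sets $\theta_t(K_m)$, with $K_m$ an exhaustion of $\Omega_j$ by holomorphically convex compacts. This gives the Runge property of $\theta_t(\Omega_j)$ in $\Omega_j$. Applying the same argument inside $\Omega_{j+1}$ and intersecting with $\Omega_j$ gives the Runge property of $\theta_1(\Omega_{j+1})$ in $\Omega_j$.

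If the closedness step fails, the fallback is to use Step 2 and push hulls forward by $\theta_t$ itself. A point of $\widehat{\theta_t(K)}\setminus\theta_t(K)$ would have a backward orbit that leaves $\Omega_j$ in finite time, and I would try to contradict the maximum principle along the orbit of $V$.
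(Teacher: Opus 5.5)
Your Steps 1--3 are sound. The sets $\Omega_j=\theta_{j-1}^{-1}(\Omega)$ are increasing, connected and Stein, and they exhaust $X$. $\theta_t$ maps $\Omega_j$ into itself, and $\theta_1$ maps $\Omega_{j+1}$ injectively into $\Omega_j$; injectivity is Abate's result, Result~\ref{injectivity of continuous semigroup action}. Runge-ness of $\theta_1(\Omega_{j+1})$ in $\Omega_{j+1}$ does imply Runge-ness in $\Omega_j$.

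\textbf{The gap is Step~4.} It carries the entire content of the theorem, and the continuity argument you sketch does not work as stated.
\begin{itemize}
\item Openness: there is no general theorem that $\mathcal{O}$-convexity of a compact set survives a small holomorphic isotopy. Forstneri\v{c}--Rosay \emph{assume} that every $\Phi_t(K)$ is polynomially convex; they do not derive it.
\item Closedness: this cannot come from ``hull continuity''. The map $K\mapsto\widehat K$ is only upper semicontinuous, which is the wrong direction for passing to a limit.
\item Example: $\Phi_t(z,w)=(z,w-t\varepsilon/z)$ is a holomorphic isotopy of $\mathbb{C}^*\times\mathbb{C}$ with $\Phi_0=\mathrm{id}$. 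The curves $K_t=\Phi_t(K_0)=\{(z,(1-t)\varepsilon/z):|z|=1\}$ are polynomially convex for $t<1$. Off the curve $zw=(1-t)\varepsilon$ the polynomial $zw-(1-t)\varepsilon$ separates points from $K_t$; on the curve, the bounds $|z|\le 1$ and $|w|\le(1-t)\varepsilon$ force $|z|=1$. Yet $K_1=\{|z|=1\}\times\{0\}$ has hull $\overline{\mathbb{D}}\times\{0\}$.
\item Your fallback (backward orbits plus a maximum principle) is only a heuristic. A hull point need not lie in $\theta_t(\Omega_j)$, its backward orbit need not be defined, and no mechanism for the contradiction is given.
\end{itemize}

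\textbf{The missing ingredient is the Docquier--Grauert theorem} (Result~\ref{Runge pair by semicontinuous extension}). It is the rigorous form of a continuity method for Runge pairs of Stein domains, and you would apply it to the monotone family $u\mapsto\theta_u(\Omega_j)$. Conditions (1)--(4) of Definition~\ref{definition-semicontinuously holomorphically extendable} follow from the semigroup law, injectivity and Lemma~\ref{eventual containment of compacts for manifolds}. Condition~(5) is the problem: it requires $\theta_{u_0}(\Omega_j)$ to be a union of components of the interior of $\bigcap_{u<u_0}\theta_u(\Omega_j)$. Proving this uses the backward maps $\theta_{-u}$ and their continuity in $u$. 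With only a semigroup, the backward orbit of a point in that interior could a priori leave $X$ exactly at time $u_0$.

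This is why the paper proceeds as follows:
\begin{itemize}
\item It first builds a Stein manifold $\tilde X$ with an $(\mathbb{R},+)$-action $\tilde\theta$, together with a Runge embedding $f\colon X\to\tilde X$ satisfying $\tilde\theta_t\circ f=f\circ\theta_t$ (Theorem~\ref{Embedding of Steins having R_+ action into Steins having R actions}).
\item It then applies Theorems~\ref{Checking Runge pairs for R-actions} and~\ref{Embedding of Stein manifold using R-actions} on $\tilde X$ and pulls back along $f$.
\end{itemize}
Your exhaustion-based route can be finished with the same device. $f(\Omega_j)$ is a Stein domain in $\tilde X$ invariant under $\tilde\theta_t$ for $t\ge 0$. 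So Theorem~\ref{Checking Runge pairs for R-actions} makes $(\tilde\theta_t(f(\Omega_j)),f(\Omega_j))$ a Runge pair, and Lemma~\ref{Rungepair biholomorphic invariant} transports this to $(\theta_t(\Omega_j),\Omega_j)$. Without some such device, Step~4 remains unproved.
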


    In 1923, C. Loewner \cite{Loewner} introduced the classical Loewner theory for the unit disc in $\mathbb{C}$. Later it was developed with contributions of P. P. Kufarev \cite{Kufarev} in 1943 and C. Pommerenke \cite{Pommerenke} in 1965. In higher dimensions, J. A. Pfaltzgraff extended the basic theory to $\mathbb{C}^n$ in \cite{Pfaltzgraff1}, \cite{Pfaltzgraff2}. In \cite{BracciContrerasMadrigal2}, F. Bracci, M. D. Conteras and S. D{\'i}az-Madrigal proved that on a complete hyperbolic domain $D\subset\Cpn$ given a Herglotz vector field of order $d\geq 1$ (Definition \ref{Def-Herglotz vector field}) $G:D\times \mathbb{R}_+\rightarrow \Cpn$ there always exists a solution $(\phi_{s,t})_{0\leq s\leq t}$ to the Loewner ODE (see also \cite{ArosioBracci}),
   \begin{equation*}
       \frac{\partial\phi_{s,t}}{\partial t}(z)=G(\phi_{s,t}(z),t) \ \ a.e. \ t\in [s,+\infty).
   \end{equation*} 
   
   An open question in this topic is: {\em On a complete hyperbolic domains $D\subset \Cpn(n\geq 2)$ given a Herglotz vector field $G(z,t),$ does the Loewner PDE
   \begin{equation*}
     \frac{\partial f_{t}}{\partial t}(z)=-df_t(z)G(z,t),   \ a.e. \  t\geq 0, \ \forall z\in D,   
   \end{equation*}
   admits a $\Cpn$-valued solution?}
   Partial answers are given in \cite{GrahamHamadaKohr},\cite{DurenGrahamHamadaKohr},\cite{GrahamHamadaKohrKohr},\cite{Voda},\cite{Arosio3},\cite{Arosio2},\cite{Arosio1}. 
L. Arosio, F. Bracci, H. Hamada and G. Kohr in \cite{ArosioBracciHamadaKohr} showed that the Loewner PDE on complete hyperbolic manifolds always admits an (essentially unique) abstract univalent solution with values in a complex manifold, say $R$, of the same dimension. It made the problem a holomorphic embedding problem of $R$ into $\mathbb{C}^{dimR}$. 
Arosio, Bracci and Wold \cite{ArosioBracciWold} first used \textit{Anders{\'e}n-Lempert} theory to solve the Loewner PDE on star-shaped domains. For other general results using \textit{Anders{\'e}n-Lempert} theory one can see \cite{Hamada}, \cite{ChatterjeeGorai}, \cite{Forstneric2025}. The most general class of examples of such complete hyperbolic domains so far were domains invariant under positive time flows of a holomorphic vector field of $\Cpn$ which has a globally attracting fixed point. This follows from \cite{Forstneric2025}. 
Here we present: 
\begin{theorem}\label{Solution of Loewner on holomorphically convex domains in density property}
Let    $D\subset\Cpn$, $n\geq 2$, be a complete hyperbolic domain. Let $G:D\times \mathbb{R}_+\rightarrow \Cpn$ be a Herglotz vector field of order $d\in[1,+\infty]$. Assume that $D$ is biholomorphic to a Runge domain in $W$, where $W\subseteq \Cpn$ is a Stein domain with the density property.\\
    Then there exists a family of univalent mappings $f_t:D\rightarrow \Cpn \ (t\geq 0)$, which solves the Loewner PDE
\begin{equation}\label{Loewner PDE1}
      \frac{\partial f_{t}}{\partial t}(z)=-df_t(z)G(z,t),   \ a.e. \  t\geq 0, \ \forall z\in D,  
     \end{equation}
     Moreover, $R=\bigcup_{t\geq 0}f_t(D)$ is a Runge and Stein domain in $W$ and any other solution to (\ref{Loewner PDE1}) with values in $\Cpn$ is of the form $(\Phi\circ f_t)$ for a suitable holomorphic mapping $\Phi:R\rightarrow \Cpn$.
\end{theorem}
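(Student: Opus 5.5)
The plan is to pass through the abstract Loewner range and reduce the problem to an embedding problem that Theorem~\ref{Main theorem for embedding Stein manifolds} can handle.

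\emph{Step 1: the abstract solution and an exhaustion.} By Arosio--Bracci--Hamada--Kohr \cite{ArosioBracciHamadaKohr}, the Herglotz vector field $G$ has an evolution family $(\phi_{s,t})$ (Bracci--Contreras--D\'iaz-Madrigal) and an abstract univalent solution $g_t:D\to R_{\rm abs}$. Here $R_{\rm abs}=\bigcup_{t\ge0}g_t(D)$ is an $n$-dimensional complex manifold, and $g_s=g_t\circ\phi_{s,t}$. Every other solution with values in a manifold factors through $(g_t)$, which gives the uniqueness clause once existence is settled. Set $\Omega_j=g_j(D)$ for $j\in\mathbb N$. Then $\Omega_j\subset\Omega_{j+1}$ and $\bigcup_j\Omega_j=R_{\rm abs}$. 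Each $\Omega_j$ is biholomorphic to $D$, so it is Stein: a complete hyperbolic domain in $\Cpn$ that is biholomorphic to a Runge domain of a Stein $W$ is Stein. Steinness of $R_{\rm abs}$ itself is not given at the start; I would take it from the known fact that an increasing union of Stein open sets forming Runge pairs is Stein, once the Runge property is established below.

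\emph{Step 2: the hypotheses of Theorem~\ref{Main theorem for embedding Stein manifolds}.} Let $\psi:D\to W$ be a Runge embedding, so $\psi\circ g_1^{-1}\in\mathcal E_{\mathcal R}(\Omega_1,W)\neq\emptyset$. The isotopy on $\Omega_j$ should come from the evolution family. Pulled back to $D$ by $g_j$, the step $\Omega_{j+1}\to\Omega_j$ should correspond to $\phi_{j,j+1}$, i.e.\ $\tilde H_{j,1}=g_j\circ\phi_{j,j+1}\circ g_{j+1}^{-1}$. Define $H_{j,t}=g_j\circ\phi_{j,j+t}\circ g_j^{-1}$ for $t\in[0,1]$. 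This is continuous in $t$, equals the identity at $t=0$, and consists of injective holomorphic self-maps of $\Omega_j$. I would check the extension property (iii) using the relation $g_s=g_t\circ\phi_{s,t}$, adjusting indices or inverting the roles of $\phi$ as needed. If the composite at $t=1$ does not come out in the right form, I would instead build the isotopy from the flow of the time-frozen vector fields $G(\cdot,t)$, which are infinitesimal generators.

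\emph{Step 3: the Runge conditions (main obstacle).} Hypotheses (ii) and (iii) require $\phi_{s,t}(D)$ to be Runge in $D$. I would prove this from the ODE. The maps $\phi_{s,t}$ arise as locally uniform limits of compositions of flows of the semicomplete fields $G(\cdot,t_k)$, discretised in time. The image of a complete hyperbolic Stein domain under the flow of a semicomplete field is Runge, via the standard isotopy criterion: a continuous family of Stein subdomains $\phi_{s,\tau}(D)$ starting at $D$ is Runge if it is monotone decreasing. Justifying monotonicity and Steinness along the family is the delicate step. It uses that $\phi_{s,\tau}(D)=\phi_{s,t}^{-1}$-preimages, together with the semigroup law.

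\emph{Step 4: conclusion.} With the hypotheses verified, Theorem~\ref{Main theorem for embedding Stein manifolds} gives a Runge embedding $\Psi:R_{\rm abs}\to W\subseteq\Cpn$. Set $f_t=\Psi\circ g_t$. Differentiating $g_s=g_t\circ\phi_{s,t}$ shows that $f_t$ solves \eqref{Loewner PDE1}. Moreover $R=\Psi(R_{\rm abs})=\bigcup_t f_t(D)$ is Runge in $W$, and it is Stein because it is biholomorphic to a Stein manifold.
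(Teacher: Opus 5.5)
Your architecture is the paper's: exhaust the abstract Loewner range $N=\bigcup_t g_t(D)$ by the sets $g_j(D)$, use the isotopy $H_{j,t}=g_j\circ\phi_{j,j+t}\circ g_j^{-1}$, apply Theorem~\ref{Main theorem for embedding Stein manifolds}, and get uniqueness from the universal property of the abstract solution. \textbf{The gap is Step~3.} It is the only genuinely nontrivial input, and the route you sketch for it does not work, for two reasons.

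\emph{Limits.} Approximating $\phi_{s,t}$ by discretised compositions of flows of the frozen fields $G(\cdot,t_k)$ gives nothing. Even if every composition has Runge image in $D$, in several variables the Runge property of images is not stable under locally uniform limits of injective holomorphic maps (cf.\ non-Runge Fatou--Bieberbach domains). So nothing passes to $\phi_{s,t}(D)$.

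\emph{Monotonicity.} The family you want to feed into the isotopy criterion, $\tau\mapsto\phi_{s,\tau}(D)$, is not monotone for non-autonomous $G$. From $\phi_{s,\tau'}=\phi_{\tau,\tau'}\circ\phi_{s,\tau}$ you only get $\phi_{s,\tau'}(D)\subset\phi_{\tau,\tau'}(D)$. For a concrete case, on the bidisc take $G(z,t)=-z$ for $t\le 1$ and $G(z,t)=(1-z_1^2,1-z_2^2)$ for $t>1$. The image first shrinks to a small bidisc about $0$ and is then carried by hyperbolic automorphisms toward $(1,1)$, eventually leaving $\phi_{0,1}(D)$.

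The missing idea is to vary the \emph{initial} time. For $s\le\tau\le\tau'\le t$ one has $\phi_{\tau,t}=\phi_{\tau',t}\circ\phi_{\tau,\tau'}$. Hence $\tau\mapsto\phi_{\tau,t}(D)=g_t^{-1}(g_\tau(D))$ increases from $\phi_{s,t}(D)$ to $D$ through Stein domains; equivalently, $g_\tau(D)$ increases in the Loewner range. This, not $\tau\mapsto\phi_{s,\tau}(D)$, is the monotone family on which a Docquier--Grauert type argument (Result~\ref{Runge pair by semicontinuous extension}) can be run.

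The conclusion you need is exactly Hamada's theorem (Result~\ref{Runge pairs in Loewner range Hamada}), which the paper simply quotes. It gives that $(g_k(D),g_{k+s}(D))$ is a Runge pair. Transporting by $g_{k+s}^{-1}$ and then by $g_k$ shows that $(H_t(g_k(D)),g_k(D))$ is a Runge pair. The same result supplies the Runge pairs $(\Omega_j,\Omega_{j+1})$ you need for Steinness of $N$ via Result~\ref{Stein property of increasing union}.

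\textbf{A smaller error in Step~2.} The extension of $H_{j,1}$ is $\tilde H_{j,1}=g_j\circ g_{j+1}^{-1}$, not $g_j\circ\phi_{j,j+1}\circ g_{j+1}^{-1}$. Your formula restricts on $\Omega_j$ to $g_j\circ\phi_{j,j+1}\circ\phi_{j,j+1}\circ g_j^{-1}\neq H_{j,1}$. With the correct formula, $\tilde H_{j,1}(\Omega_{j+1})=\Omega_j$, so the Runge part of (iii) is automatic. The frozen-flow fallback is therefore unnecessary; it would in any case not produce a terminal map extending to $\Omega_{j+1}$.
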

Note that, if $D$ is a complete hyperbolic Runge domain in $\Cpn$ then it satisfies the assumption of the above theorem for $W=\Cpn$ for $n\geq 2$, and the Loewner PDE always admits an $\Cpn$-valued solution on $D$.  Now the following corollary is an immediate consequence of Theorem \ref{Solution of Loewner on holomorphically convex domains in density property} and Theorem \ref{Runge embedding of Stein manifolds which admits a R+complete holomorphic vector field $V$ which has a unique equilibrium point that is globally asymptotically stable}.

\begin{corollary}\label{cor-Solution of Loewner on complete hyperbolic domains that admits special vector fields}
Let $D$ be a complete hyperbolic domain in $\Cpn$. Let $G:D\times \mathbb{R}_+\rightarrow \Cpn$ be a Herglotz vector field of order $d\in[1,+\infty]$. Assume that there exists $t_0\geq 0$ such that $G(\cdot,t_0)$ has a unique globally asymtotically stable equilibrium point.
    Then there exists a family of univalent mappings $f_t:D\rightarrow \Cpn \ (t\geq 0)$, which solves the Loewner PDE
\begin{equation*}
      \frac{\partial f_{t}}{\partial t}(z)=-df_t(z)G(z,t),   \ a.e. \  t\geq 0, \ \forall z\in D,  
     \end{equation*}
\end{corollary}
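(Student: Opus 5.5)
The plan is to derive the corollary from Theorem~\ref{Solution of Loewner on holomorphically convex domains in density property} with $W=\Cpn$. The only hypothesis of that theorem still to be checked is that $D$ is biholomorphic to a Runge domain in $\Cpn$. I will obtain this from Theorem~\ref{Runge embedding of Stein manifolds which admits a R+complete holomorphic vector field $V$ which has a unique equilibrium point that is globally asymptotically stable} applied to $X=D$ and $V=G(\cdot,t_0)$. For that I need three facts: $D$ is Stein of dimension $n\geq 2$, the field $V$ is $\mathbb{R}_+$-complete on $D$, and $V$ has a unique globally asymptotically stable equilibrium point. The last is given in the hypothesis.

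\emph{Stein.} A complete hyperbolic domain in $\Cpn$ is taut, and a taut domain is pseudoconvex. Hence $D$ is Stein.

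\emph{$\mathbb{R}_+$-completeness.} This is the step that needs care. By the definition of a Herglotz vector field (Definition~\ref{Def-Herglotz vector field}), for almost every $t$ the field $G(\cdot,t)$ is an infinitesimal generator on $D$. In other words, $\dot z=G(z,t)$ generates a semigroup of holomorphic self-maps of $D$, so $G(\cdot,t)$ is $\mathbb{R}_+$-complete.

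There are two issues with using this for $t_0$:
\begin{itemize}
\item Since the condition holds only for almost every $t$, it may fail at the particular time $t_0$.
\item The dimension assumption $n\geq 2$ is implicit in the corollary and must be stated.
\end{itemize}
For the first issue, I would first try to use that the set of infinitesimal generators on a complete hyperbolic domain is closed in the topology of locally uniform convergence. The idea is to approximate $G(\cdot,t_0)$ by $G(\cdot,t)$ with $t$ near $t_0$. This needs continuity in $t$, which Herglotz fields do not generally have, so it may not work. If it does not, I would read the hypothesis on $t_0$ as already including that $G(\cdot,t_0)$ is a generator, i.e.\ an $\mathbb{R}_+$-complete field. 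This reading is natural, because the phrase ``globally asymptotically stable'' only makes sense for a field whose forward flow exists for all positive time.

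Once $D$ is Stein and $V=G(\cdot,t_0)$ is $\mathbb{R}_+$-complete with a unique globally asymptotically stable equilibrium, Theorem~\ref{Runge embedding of Stein manifolds which admits a R+complete holomorphic vector field $V$ which has a unique equilibrium point that is globally asymptotically stable} gives an injective holomorphic map $\Psi:D\to\Cpn$ with $\Psi(D)$ Runge in $\Cpn$. Thus $D$ is biholomorphic to a Runge domain in $W=\Cpn$, and $\Cpn$ is Stein with the density property for $n\geq 2$ by Anders\'en--Lempert. Theorem~\ref{Solution of Loewner on holomorphically convex domains in density property} then gives the univalent family $f_t:D\to\Cpn$ solving the Loewner PDE. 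I expect the completeness-at-$t_0$ point above to be the only real obstacle.
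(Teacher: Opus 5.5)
Your proposal is correct and follows the paper's proof. The paper likewise gets a Runge embedding of $D$ into $\Cpn$ from the field $G(\cdot,t_0)$, citing Proposition~\ref{Runge embedding of phi-like domains} (the domain-in-$\Cpn$ version of the theorem you invoke), and then applies Theorem~\ref{Solution of Loewner on holomorphically convex domains in density property} with $W=\Cpn$; your reading that the hypothesis at $t_0$ already includes $\mathbb{R}_+$-completeness matches Definition~\ref{Definition- Manifolds that admits a vector field globally asymptotically stable}, which the paper uses implicitly along with the tacit assumption $n\geq 2$.
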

 For $n\geq 2$ we give the first example of a non-Runge domains in $\Cpn$ where the Loewner PDE admits a $\Cpn$-valued solution (see Example \ref{Example-Solution of Loewner on non-Runge domains} for details).
\begin{remark}
    For $n=1$ any complete hyperbolic Runge domain is simply connected, hence biholomorphic to the unit disc in $\mathbb{C}$. It is known \cite{ContrerasMadrigalGumenyuk-disc} that the Loewner PDE on the unit disc admits a $\mathbb{C}$-valued solution.
\end{remark}

Some ideas about the proofs: our proof of Theorem~\ref{Main theorem for embedding Stein manifolds} mainly depends on {\em  Anders{\'e}n-Lempert theory} and some union embedding technique for an increasing sequence of complex manifolds. We can break the idea of the proof into the following steps,
\begin{itemize}
    \item[(a)] In the first step we shall use Anders{\'e}n-Lempert theory to show $\mathcal{E}_{\mathcal{R}}(\Omega_j,W)\subset \overline{\mathcal{E}_{\mathcal{R}}(\Omega_{j+1},W)}|_{\Omega_j}$ in $\mathcal{E}(\Omega,W)$, for each $j\in \mathbb{N}$.  
    \item[(b)] Then we shall find an injective holomorphic map $\Psi:X\rightarrow W$, as a limit of a sequence $\{\Psi_j\}_{j\in \mathbb{N}}$ , where $\Psi_j\in \mathcal{E}_{\mathcal{R}}(\Omega_j,W).$
    \item[(c)] Next, we shall construct a family $\{L_j\}_{j\in \mathbb{N}}$ of compact subsets of $\Phi(X)$ such that for any compact $L\subset \Phi(X)$ there exists a $j_0$ such that $L\subset L_{j_0}$. Also, for any $L_j$, and any holomorphic function in its neighborhood can be approximated by holomorphic functions of $W$ uniformly on $L_j$. 
    \item[(d)] At the end, for any $F\in \mathcal{E}_\mathcal{R}(\Omega_s,W)$, we shall find a sequence $\{F_j\}_{j\in \mathbb{N}}$ in $\mathcal{E}_\mathcal{R}(X,W)$ that converges to $F$ locally uniformly on $\Omega_s$.
\end{itemize}
We shall use Proposition \ref{Continuous isotopy and approximation of Y valued biholomorphisms} to prove step $(a)$, and Theorem \ref{Union embedding in domain with Density} to complete the proof. 

\noindent Our proof of Theorem~\ref{cor-Approximation on action invariant domains of Cn} and Theorem~\ref{Embedding of Stein manifold using R-actions} uses Theorem~\ref{Main theorem for embedding Stein manifolds}. The $(R,+)$-action enables us to use Docquier-Grauert theorem to bypass Runge pair conditions and also helps us to construct the required homotopy. Theorem~\ref{cor-Runge embedding of Cn-A x C} needs some explicit construction along with Theorem~\ref{Main theorem for embedding Stein manifolds}. The proofs of Theorems~\ref{Main theorem for embedding Stein manifolds}, \ref{cor-Approximation on action invariant domains of Cn}, and \ref{Embedding of Stein manifold using R-actions}, \ref{cor-Runge embedding of Cn-A x C} are presented in Section~\ref{sec-main}.
\smallskip

\noindent We explicitly construct a Forstneri\v{c}-Rosay homotopy for approximation of biholomorphism in our proof of Theorem~\ref{Starshaped}. Along with that we prove Theorem~\ref{Runge embedding of Stein manifolds which admits a R+complete holomorphic vector field $V$ which has a unique equilibrium point that is globally asymptotically stable} and 
Theorem~\ref{Runge embedding of increasing sequence of Stein domains} in Section~\ref{sec-approx}. Proofs of these theorem uses $R_+$-action crucially; a technical lemma (Lemma~\ref{properties of invariant open exhaustion}) is presented which collects the properties of the domains present in the above theorems.
\smallskip

\noindent Proof of Theorem~\ref{thm-semigroupAction} needed to transfer the semigroup action into a group action. This is done in Theorem~\ref{Embedding of Steins having R_+ action into Steins having R actions} in Section~\ref{sec-transferSemigroup}.
\smallskip

\noindent In our proof of Theorem~\ref{Solution of Loewner on holomorphically convex domains in density property} our main theorem Theorem~\ref{Main theorem for embedding Stein manifolds} is used crucially. Final section is devoted to this.


\section{Preliminaries}\label{sec-prelims}
Let $M$ be a complex manifold and $K\subset M$ be a compact subset of $M$. Let $\mathcal{O}(M)=\{f:M\rightarrow \mathbb{C}: f\text{ is holomorphic on $M$}\}$, and
\[\mathcal{O}(K)=\{f:K\rightarrow\mathbb{C}: f\text{ is holomorphic in a neighborhood of $K$ in $M$}\},\]\[\mathcal{O}(M)|_{K}=\{f|_K:f\in \mathcal{O}(M)\}.\]

 Although the following lemma is a well-known result we write a proof as we have often used it throughout the paper. 
\begin{lemma}\label{Rungepair biholomorphic invariant}
   Let $\tilde{\Omega}, M$ be two complex manifold of the same dimension. Let $\Omega $ be an open set in $\tilde{\Omega}$ such that $(\Omega,\tilde{\Omega})$ is a Runge pair. Let $\Phi:\tilde{\Omega}\rightarrow M$ be any injective holomorphic map. Then $(\Phi(\Omega),\Phi(\tilde{\Omega}))$ is a Runge pair if and only if $(\Omega,\tilde{\Omega})$ is a Runge pair. Furthermore, If $K$ is a compact set in $\tilde{\Omega}$ such that  $\mathcal{O}(\tilde{\Omega})|_K$ is dense in $\mathcal{O}(K)$ then $\mathcal{O}(\Phi(\tilde{\Omega}))|_{\Phi(K)}$ is dense in $\mathcal{O}(\Phi(K))$.
\end{lemma}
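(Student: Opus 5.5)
The whole argument rests on one observation: an injective holomorphic map between manifolds of the same dimension is a biholomorphism onto its image, which is open. Holomorphic functions can therefore be pulled back and pushed forward along $\Phi$. Set $\Psi=\Phi^{-1}:\Phi(\tilde{\Omega})\to\tilde{\Omega}$; it is holomorphic by the inverse function theorem, since an injective equidimensional holomorphic map has nonvanishing Jacobian determinant. Composition with $\Phi$ then gives an algebra isomorphism
\[
\Phi^*:\mathcal{O}(\Phi(\tilde{\Omega}))\to\mathcal{O}(\tilde{\Omega}),\qquad g\mapsto g\circ\Phi,
\]
and composition with $\Phi|_\Omega$ gives an isomorphism $\mathcal{O}(\Phi(\Omega))\to\mathcal{O}(\Omega)$. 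These isomorphisms commute with restriction from $\tilde{\Omega}$ to $\Omega$.

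The next step is to check that $\Phi^*$ is a homeomorphism for the compact-open topologies. Compact sets $L\subset\Phi(\Omega)$ correspond bijectively to compact sets $\Phi^{-1}(L)\subset\Omega$. Moreover,
\[
\sup_L|g-h|=\sup_{\Phi^{-1}(L)}|g\circ\Phi-h\circ\Phi|,
\]
so locally uniform approximation transfers in both directions.

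For the Runge pair equivalence, suppose $(\Omega,\tilde{\Omega})$ is a Runge pair. Take $g\in\mathcal{O}(\Phi(\Omega))$ and a compact set $L\subset\Phi(\Omega)$. Approximate $g\circ\Phi$ on $\Phi^{-1}(L)$ by some $f\in\mathcal{O}(\tilde{\Omega})$. Then $f\circ\Psi\in\mathcal{O}(\Phi(\tilde{\Omega}))$ approximates $g$ on $L$ to the same accuracy. The converse runs identically with the roles of $\Phi$ and $\Psi$ exchanged, so the equivalence is immediate.

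For the compact-set statement, take $g\in\mathcal{O}(\Phi(K))$, meaning $g$ is holomorphic on an open neighborhood $U$ of $\Phi(K)$ in $M$. The one point needing care is that $U$ need not lie inside $\Phi(\tilde{\Omega})$. Since $\Phi(\tilde{\Omega})$ is open, we may shrink $U$ to $U\cap\Phi(\tilde{\Omega})$, which is still a neighborhood of $\Phi(K)$. Then $g\circ\Phi$ is holomorphic on the open neighborhood $\Phi^{-1}(U)$ of $K$, so it lies in $\mathcal{O}(K)$. By hypothesis it is approximated uniformly on $K$ by some $f\in\mathcal{O}(\tilde{\Omega})$, and $f\circ\Psi\in\mathcal{O}(\Phi(\tilde{\Omega}))$ approximates $g$ uniformly on $\Phi(K)$. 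The statement should therefore be read with $\mathcal{O}(\Phi(K))$ taken relative to $\Phi(\tilde{\Omega})$, which is equivalent after this shrinking. That shrinking is the only real subtlety; everything else is bookkeeping.
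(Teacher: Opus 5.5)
Your proposal is correct and follows the paper's proof: pull back by $\Phi$, approximate using the Runge hypothesis, and push forward by $\Phi^{-1}$, with the converse obtained by running the same argument for $\Phi^{-1}$. Your step of shrinking the neighborhood of $\Phi(K)$ into the open set $\Phi(\tilde{\Omega})$ just makes explicit what the paper leaves implicit when it says ``the same proof works for the second statement.''
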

\begin{proof}
    Take $f\in \mathcal{O}(\Phi(\Omega))$. Then $f\circ \Phi\in \mathcal{O}(\Omega)$.
    Since $(\Omega,\tilde{\Omega})$ is a Runge pair, there exists a sequence $\{f_k\}_{k\in \mathbb{N}}$ in $\mathcal{O}(\tilde{\Omega})$ such that $f_k$ converges to $f\circ \Phi$ locally uniformly on $\Omega$.
    This shows $f_k\circ \Phi^{-1}\in \mathcal{O}(\Phi(\Omega))$ and $f_k\circ  \Phi^{-1}$ converges to $f$ locally uniformly on $\Phi(\Omega)$.\\
    Assume that $(\Phi(\Omega),\Phi(\tilde{\Omega}))$ is a Runge pair. We know that $\Phi^{-1}:\Phi(\tilde{\Omega})\rightarrow \tilde{\Omega}$ is an injective holomorphic map. Hence from the first part $(\Phi^{-1}(\Phi(\Omega)),\Phi^{-1}(\Phi(\tilde{\Omega})))=(\Omega,\tilde{\Omega})$ is a Runge pair.\\
    The same proof works for the second statement.
\end{proof}



The following result is the central result of Anders{\'e}n-Lempert theory (discovered in \cite{AndersenLempert}). This $\mathcal{C}^p$-version $(p\geq 0)$ is proved by Forstneri\v c in \cite{Forstnericcont} (see also \cite{ForstnericRosay}).

\begin{result}\cite[Theorem~1.1]{Forstnericcont}\label{Forstneric Rosay continuous isotopy}
  Let $\Omega \subset \Cpn (n\geq 2)$ be a Runge domain, and let $B$ be the closed unit ball in $\mathbb{R}^k$. Assume that $F: B\times  \Omega\rightarrow \Cpn$ is a mapping of class $\scr{C}^p(0\leq p<\infty)$ such that for each $x\in B$, $F_x=F(x,\cdot):\Omega\rightarrow \Cpn$ is a biholomorphic mapping onto a Runge domain $\Omega_x\subset \Cpn$, and the map $F_0$ can be approximated by automorphisms of $\Cpn$, uniformly on compacts of $\Omega$. Then for each compact set $K\subset\Omega$ and each $\epsilon>0$ there exists a smooth map $\Phi: B\times \Cpn\rightarrow\Cpn$ such that $\Phi_x=\Phi(x,\cdot)$ is a holomorphic automorphism of $\Cpn$ for every $x\in B$ and $||F-\Phi||_{\scr{C}^p(B\times K)}<\epsilon $. 
\end{result}


 The following version on manifolds with the density property, has been stated by Varolin \cite{Varolin}. We take the following continuous version from \cite{ForstnericKutzschebauch}. 
\begin{result}\cite[Theorem~1.3]{ForstnericKutzschebauch}\label{Continuous isotopy in Manifold with the density}
    Let $X$ be a Stein manifold with the density property. Let $\Phi_t:\Omega\rightarrow \Omega_t=\Phi_t(\Omega)\subset X$ is a continuous isotopy of biholomorphic maps between Stein and Runge domains in $X$ with $\Phi_0=Id_{\Omega}$, then $\Phi_1$ can be approximated uniformly on compacts in $\Omega$ by holomorphic automorphisms of $X$.
\end{result}
The following lemma is an easy corollary to Result \ref{Continuous isotopy in Manifold with the density}. We will use it in the proof of Proposition \ref{Continuous isotopy and approximation of Y valued biholomorphisms}.
\begin{lemma}\label{Continuous isotopy in Manifold with the density-modified}
    Let $X$ be a Stein manifold with the density property and  $H_t :\Omega \rightarrow H_t(\Omega)\subset  X\ (t\in [0,1])$ be a continuous isotopy of biholomorphism into $X$. Also assume that for each $t\in[0,1]$, $H_t(\Omega)$ is Stein and Runge domain in $X$ with $H_0=Id_{\Omega}$. Then $H_1^{-1}$ can be approximated by holomorphic automorphisms of $X$ locally uniformly on $H_1(\Omega)$.
\end{lemma}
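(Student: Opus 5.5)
The plan is to reverse the isotopy and apply Result~\ref{Continuous isotopy in Manifold with the density} to the inverse maps. Put $\Omega_t:=H_t(\Omega)$, which is Stein and Runge in $X$ by hypothesis, and work from the terminal domain $\Omega_1$. For $t\in[0,1]$ define
\[
G_t:=H_{1-t}\circ H_1^{-1}:\Omega_1\rightarrow H_{1-t}(\Omega)=\Omega_{1-t}\subset X .
\]
Each $G_t$ is a biholomorphism from $\Omega_1$ onto $\Omega_{1-t}$, which is a Stein and Runge domain in $X$. At the endpoints, $G_0=H_1\circ H_1^{-1}=Id_{\Omega_1}$ and $G_1=H_0\circ H_1^{-1}=H_1^{-1}$. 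So $\{G_t\}$ is an isotopy of biholomorphisms between Stein Runge domains, based at the identity of $\Omega_1$ and ending at $H_1^{-1}$. Result~\ref{Continuous isotopy in Manifold with the density}, applied with $\Omega$ replaced by $\Omega_1$ and $\Phi_t=G_t$, then says that $G_1=H_1^{-1}$ is approximable uniformly on compacts of $\Omega_1=H_1(\Omega)$ by automorphisms of $X$. This is exactly the claim.

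The only point needing care is continuity of $(t,w)\mapsto G_t(w)$ on $[0,1]\times\Omega_1$. Since $H_1^{-1}$ is a fixed holomorphic map, this reduces to joint continuity of $(t,z)\mapsto H_{1-t}(z)$, which is the continuity of the isotopy $H$. No continuity of $t\mapsto H_t^{-1}$ is needed, because only the single inverse $H_1^{-1}$ appears.

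I expect the main obstacle to be matching the exact meaning of ``continuous isotopy'' in Result~\ref{Continuous isotopy in Manifold with the density}. It may demand, for instance, that the domains $\Omega_t$ vary continuously, or that a whole compact $K$ can be followed along the isotopy. If so, I would argue as follows: for a compact $K\subset\Omega_1$, the set $\bigcup_t G_t(K)$ is the continuous image of $[0,1]\times K$, hence compact, and lies in $\bigcup_t\Omega_{1-t}$. This is the same situation as for the forward isotopy $H_t$, so any uniformity hypothesis satisfied by $H$ transfers directly to $G$.

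If instead one insists on using the statement for $H$ itself, an alternative route is the following. First approximate $H_1$ on a compact set $L\subset\Omega$ by automorphisms $\Phi_k$. Then observe that on a compact $K\subset H_1(\Omega)$ the maps $\Phi_k^{-1}$ converge to $H_1^{-1}$ by the standard inverse-map argument (Rouché/Hurwitz-type uniform convergence of inverses on $K\subset H_1(\mathrm{int}\,L)$). This route needs more bookkeeping, so I would use the reversed isotopy as the main argument.
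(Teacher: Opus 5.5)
Your proposal is correct and matches the paper's proof: it uses the same reversed isotopy $G_t=H_{1-t}\circ H_1^{-1}$ on $H_1(\Omega)$, notes that $G_0=Id$ and that $G_t(H_1(\Omega))=H_{1-t}(\Omega)$ is Stein and Runge, and then applies Result~\ref{Continuous isotopy in Manifold with the density} to obtain $G_1=H_1^{-1}$. Your continuity check and the alternative route through inverses of approximating automorphisms are valid, but the argument does not need them.
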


\begin{proof}
     Take the isotopy $G:[0,1]\times H_1(\Omega) \rightarrow X$ given by $G(t,w)=H_{1-t}\circ H_1^{-1}(w)$.
     Clearly, $G$ is a continuous and $G_0(w)=w, \ \forall w\in H_1(\Omega)$. For each $t\in [0,1]$, $G_t(H_1(\Omega))=H_{1-t}(\Omega)$, which is Stein and Runge in $X$. Now we can use Result \ref{Continuous isotopy in Manifold with the density} to conclude that  $G_1=H_1^{-1}$ can be approximated by holomorphic automorphisms of $X$ locally uniformly on $H_1(\Omega)$.
\end{proof}

    The following approximation theorem is also due to Anders{\'e}n and Lempert \cite{AndersenLempert}.
    \begin{result}\cite[Theorem~2.1]{AndersenLempert}\label{Andersen-Lempert Star theorem}
        Let $D\subset \Cpn$ be a star-shaped domain in $\Cpn$. Then any biholomorphic map $F:D\rightarrow\Cpn$ such that $F(D)$ is Runge in $\Cpn$ can be approximated by automorphisms of $\Cpn$ locally uniformly on $D$.
    \end{result}

Let $Hol(X,X)$ denote the space of all holomorphic self maps on a complex manifold $X$ endowed with the compact open topology. The following result is from \cite{Abate}. The semigroup is taken to be continuous by definition in that paper.
\begin{result}\cite[Proposition~1]{Abate}\label{injectivity of continuous semigroup action}
    Let $\Phi: \mathbb{R}_+\rightarrow Hol(X,X)$ be a one-parameter semigroup on a complex manifold $X$. Then $\Phi_t$ is injective for all $t\geq 0$.
\end{result}
Now, we mention a lemma from \cite{ArosioBracciHamadaKohr}.
 \begin{result}\cite[Corollary~3.2]{ArosioBracciHamadaKohr}\label{eventual containment of compacts}
     Let $U$ be an open set in $\Cpn$ and $\{F_k\}_{k\in \mathbb{N}}$ $(F_k:U\rightarrow \Cpn)$ be a sequence of injective holomorphic maps. Assume that $F_k$ converges to another injective holomorphic map $F:U\rightarrow\Cpn$. Then for any compact set $L\subset F(U)$ there exists an integer $k(L)$ such that $L\subset F_k(U)$ for all $k\geq k(L)$.
 \end{result}
  Next we state and prove Result \ref{eventual containment of compacts} for complex manifolds. This will be used in our proofs of Theorem \ref{Union embedding in domain with Density}, Lemma \ref{Hurwitz-non-degenerate limit of injective}, and Theorem \ref{Checking Runge pairs for R-actions}.
  \begin{lemma}\label{eventual containment of compacts for manifolds}
     Let $M,N$ be complex manifolds of the same dimension. Let $\{F_j\}_{j\in \mathbb{N}}$ $(F_j:M\rightarrow N)$ be a sequence of injective holomorphic maps. Assume that $F_j$ converges to another injective holomorphic map $F:M\rightarrow N$. Then for any compact set $L\subset F(M)$ there exists an integer $j(L)$ such that $L\subset F_j(M)$ for all $j\geq j(L)$.
 \end{lemma}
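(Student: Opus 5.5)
The idea is to reduce to a local statement near each point of $F(M)$ and then use a degree (argument principle) argument, which is exactly Hurwitz's theorem in several variables. By compactness of $L$, it suffices to show the following. For every point $q=F(p)\in L$ there is a neighborhood $V_q$ of $q$ and an integer $j_q$ with $V_q\subset F_j(M)$ for all $j\geq j_q$. Finitely many $V_q$ then cover $L$, and $j(L)$ is the maximum of the corresponding $j_q$.

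To prove the local claim, fix $p\in M$ with $F(p)=q$. Since $F$ is injective holomorphic between manifolds of the same dimension, $F$ is a biholomorphism onto the open set $F(M)$ (the Jacobian of an injective equidimensional holomorphic map is nonvanishing). Choose holomorphic charts $\varphi$ around $p$ and $\psi$ around $q$, with $\varphi(p)=0$ and $\psi(q)=0$. Choose a small closed ball $\overline{B}=\overline{B(0,r)}$ such that:
\begin{itemize}
\item $\varphi^{-1}(\overline{B})$ is compact in the chart domain;
\item $F(\varphi^{-1}(\overline{B}))$ lies in the domain of $\psi$.
\end{itemize}
Put $f=\psi\circ F\circ\varphi^{-1}$ on a neighborhood of $\overline B$. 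This is a biholomorphism onto its image, and $f(B)$ is an open neighborhood of $0$ in $\mathbb{C}^n$. Since $F_j\to F$ uniformly on the compact set $\varphi^{-1}(\overline{B})$, for large $j$ the set $F_j(\varphi^{-1}(\overline{B}))$ also lies in the domain of $\psi$. So $f_j=\psi\circ F_j\circ\varphi^{-1}$ is defined on a neighborhood of $\overline B$ and $f_j\to f$ uniformly on $\overline B$.

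We are now reduced to the Euclidean setting of Result \ref{eventual containment of compacts}, and we can either invoke it directly with $U=B$ or argue as follows. Let $\delta=\operatorname{dist}(0,f(\partial B))>0$, which is positive by injectivity of $f$, and take $V=B(0,\delta/2)$. Choose $j_q$ such that $\sup_{\overline B}|f_j-f|<\delta/2$ for $j\ge j_q$. For $w\in V$, the straight-line homotopy $f+s(f_j-f)-w$ has no zeros on $\partial B$. Hence the Brouwer degree of $f_j-w$ on $B$ equals that of $f-w$, which is $1$. Therefore $w\in f_j(B)$. Pulling back, $V_q=\psi^{-1}(V)\subset F_j(M)$ for all $j\ge j_q$.

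The only delicate point, mild as it is, is the chart bookkeeping. The uniform convergence on the compact set $\varphi^{-1}(\overline B)$ is what guarantees that the images $F_j(\varphi^{-1}(\overline B))$ eventually lie in a single target chart, so that $f_j$ makes sense and the degree argument of Result \ref{eventual containment of compacts} applies verbatim. With that in place, the finite subcover of $L$ finishes the proof.
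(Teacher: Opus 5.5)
Your proposal is correct and follows essentially the same route as the paper: you localize with charts around $p$ and $F(p)$, use uniform convergence on a compact coordinate neighbourhood so that the images under $F_j$ land in a single target chart, apply the Euclidean statement (Result \ref{eventual containment of compacts}) to get a neighbourhood of each point of $L$ that is eventually contained in $F_j(M)$, and finish with a finite subcover. Your optional degree argument only reproves that Euclidean step; note that the claim that the degree of $f-w$ on $B$ equals $1$ needs $B(0,\delta)\subset f(B)$, which holds because $\partial f(B)\subset f(\partial B)$ by injectivity of $f$ on $\overline{B}$.
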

 \begin{proof}
     Let $b\in L$. We choose a chart $(V,\psi)$ of $N$ such that $b\in V,$ and another chart $(U,\phi)$ of $M$ such that $U\Subset M $ and \[b\in F(\bar{U})\subset V.\]
By assumption,  $F_j$ converges to $F$ uniformly on $\bar{U}$. So that there exists a $j_0\in \mathbb{N}$ such that $F_j(U)\subset V, \ \forall j\geq j_0$.
     Here $\psi\circ F_j\circ \phi^{-1}:\phi(U)\rightarrow \psi(V)$ $(j\geq j_0)$
is a sequence of injective holomorphic maps converging to  $\psi\circ F\circ \phi^{-1}:\phi(U)\rightarrow \psi(V)$. Let us choose a neighborhood $V_b\Subset F(U)$ of $b$ such that $\psi(\bar{V}_b)\subset \psi(V)$. Using Result \ref{eventual containment of compacts}, there exists a $k(b)\in \mathbb{N}$ such that $\psi(\bar{V}_b)\subset(\psi\circ F_j\circ \phi^{-1})(\phi(U))\Rightarrow \bar{V}_b\subset F_j(U) , \ \forall j\geq k(b)$.  Here, $\{V_b\}_{b\in L}$ is an open cover for $L$. 
Since $L$ is compact, there exists a finite set $\{b_1,\dots,b_m\}\subset L$ such that $L\subset \cup^m_{j=1}V_{b_j}$, and \[L\subset F_j(M), \ \forall j\geq max\{k(b_j):1\leq j\leq m\}.\]
This proves our lemma.
\end{proof}
Let $M,N$ be two complex manifolds of the same dimension. Let $F:M\rightarrow N$ be a holomorphic map and $p\in M$. We say that $F$ is {\em non-degenerate at $p$} if the differential $DF_p:T_pM\rightarrow T_{F(p)}N$ is an isomorphism.
 The following lemma  follows from Hurwitz's lemma. Since we did not find an explicit reference for it, we provide a short proof. We will use it in Theorem \ref{Union embedding in domain with Density}.
\begin{lemma}[Hurwitz]\label{Hurwitz-non-degenerate limit of injective}
    Let $M,N$ be two complex manifolds of the same dimension and $M$ is connected. Let $\{F_j\}_{j\in \mathbb{N}}$ $(F_j:M\rightarrow N)$ be a sequence of injective holomorphic maps converging locally uniformly to $F:M\rightarrow N$, and $F$ is non-degenerate at $p\in M$. Then $F$ is an injective holomorphic map on $M$.
\end{lemma}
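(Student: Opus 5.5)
The plan is to combine the classical several-variable Hurwitz theorem with the fact that a non-degenerate holomorphic map between equidimensional manifolds is locally injective. First, consider the holomorphic Jacobian determinant in local charts. Let $Z=\{q\in M: F \text{ is degenerate at } q\}$. In local coordinates around any point of $M$, $Z$ is the zero set of $\det DF$, which is a holomorphic function, so $Z$ is an analytic subset of $M$. Since $p\notin Z$ and $M$ is connected, $Z$ is a proper analytic subset. Hence $M\setminus Z$ is connected, open and dense in $M$.

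Next we show that $F$ is non-degenerate everywhere, i.e.\ $Z=\emptyset$, using Hurwitz. Each $F_j$ is injective and equidimensional, so $\det DF_j$ vanishes nowhere in local charts (an injective holomorphic map between equidimensional manifolds is biholomorphic onto its open image). Fix $q\in M$ and charts around $q$ and $F(q)$. By local uniform convergence, for large $j$ the maps $F_j$ land in the target chart near $q$. In coordinates, $\det D(\psi\circ F_j\circ\phi^{-1})\to \det D(\psi\circ F\circ \phi^{-1})$ locally uniformly by the Weierstrass theorem on derivatives. The limit function is not identically zero on the (connected) coordinate ball, by the identity principle applied along a chain of charts from $p$, since $Z$ is proper and nowhere dense. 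The classical one-variable Hurwitz theorem, applied on complex lines through $q$ on which the limit is not identically zero, shows that a locally uniform limit of nowhere-vanishing holomorphic functions is either identically zero or nowhere zero. Hence $\det DF(q)\neq 0$, so $Z=\emptyset$.

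Finally we prove injectivity. Suppose $F(a)=F(b)=c$ with $a\neq b$. Choose disjoint relatively compact coordinate neighbourhoods $U_a\ni a$ and $U_b\ni b$ on which $F$ is a biholomorphism onto open neighbourhoods of $c$; this is possible by the inverse function theorem, since $F$ is non-degenerate at $a$ and at $b$. Apply Lemma~\ref{eventual containment of compacts for manifolds} to the injective maps $F_j|_{U_a}\to F|_{U_a}$ and to $F_j|_{U_b}\to F|_{U_b}$ with the compact set $L=\{c\}$, which lies in both $F(U_a)$ and $F(U_b)$. For large $j$ we then get $c\in F_j(U_a)\cap F_j(U_b)$, contradicting the injectivity of $F_j$ on $U_a\cup U_b$. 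This shows that $F$ is injective.

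The main obstacle is the step that passes from non-degeneracy at the single point $p$ to non-degeneracy everywhere. There one must check that the Jacobian determinant is not identically zero in every chart, which follows from connectedness of $M$ and the identity theorem. One must then apply Hurwitz's theorem in several variables to the Jacobians, most simply by restricting to a complex line through $q$ on which $\det DF$ does not vanish identically.
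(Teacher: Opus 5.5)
Your proof is correct and follows the paper's route: you apply Hurwitz to the Jacobian determinants to pass from non-degeneracy at $p$ to non-degeneracy everywhere, then use the eventual-containment lemma on disjoint neighbourhoods of two preimages of the same point to contradict injectivity of some $F_j$. The differences are cosmetic. The paper shows the non-degeneracy set is open and (by Hurwitz) closed, where you use that a proper analytic subset of a connected manifold is nowhere dense. You also take $L=\{c\}$ instead of a closed neighbourhood $\bar{V}_w$, which is a harmless simplification.
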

\begin{proof}
We first show $F$ is non-degenerate at every point of $M$. Let us denote \[Q=\{z\in M: F \text{ is non-degenerate at z}\}.\] It is well known that $Q$ is open. Since each $F_j$ is injective, from Hurwitz's lemma it follows  that $Q$ is also closed. Since $M$ is connected and $Q\neq \emptyset$, we get that $M=Q$.

  On the contrary, let us assume that $F$ is not injective. Then there exists $a,b\in M$ such that $a\neq b$, and $F(a)=F(b)=w.$ Since $F$ is non-degenerate at $a$ and $b$, we can choose neighborhoods $U_a$ and $U_b$ of $a$ and $b$ respectively such that $\bar{U}_a\cap\bar{U}_b=\emptyset$ and $F|_{U_a}$, $F|_{U_b}$ are injective. Take a neighborhood $V_w$ of $w$ such that $w\in \bar{V}_w\subset F(U_a)\cap F(U_b)$. Now $\bar{V}_w\subset F(U_a)$ and $F_j|_{U_a}$ converges to $F|_{U_a}$ uniformly on $U_a$. Since $F|_{U_a}$ is injective, by Lemma \ref{eventual containment of compacts for manifolds}, we get that there exists a $N(a)\in \mathbb{N}$ such that $V_w\subset F_j(U_a),\ \forall j\geq N(a).$ Similarly, there exists a $N(b)\in \mathbb{N}$ such that $V_w\subset F_j(U_b), \ \forall j\geq N(b)$. Therefore we obtain that \[ V_w\subset F_N(U_a)\cap F_N(U_b), \ \text{ where } N=max\{N(a),N(b)\}\]
  Since $U_a\cap U_b=\emptyset$, it contradicts the fact that $F_N$ is injective on $M$. Hence $F:M\rightarrow N$ must be injective.
\end{proof}
Brickman introduced the definition of a $\Phi$-like domain in \cite{Brickman}, as a generalization of starlike and spirallike domains in $\cplx$. Gurganus defined it for higher dimensions in \cite{Gurganus}. In addition, one can find a weaker definition in \cite[Definition 4.22]{HamadaIancuKohr}, and we need this to state the following two results.
\begin{definition}\label{Definition of phi like domains}
    Let $\Omega\subset \Cpn$ be a domain. If $0 \in \Omega$ and there exists a holomorphic mapping $\Phi$ from $\Omega$ into $\Cpn$ such that $\Phi(0)=0$, and, for every $z\in \Omega$, the initial value problem 
    \begin{equation}
        \frac{\partial w}{\partial t}(z,t)=-\Phi(w(z,t)), \ \ t\geq 0, \ \ w(z,0)=z,
    \end{equation}
    has a solution $w(z,\cdot)$ on $[0,\infty)$ such that $w(z,t) \in \Omega$, $t \geq 0$, and $w(z,t) \rightarrow 0$, as $t\rightarrow \infty$, then we say that $\Omega$ is {\em $\Phi$-like domain}.
\end{definition}
We mention the following two results from \cite{HamadaIancuKohr}. If $D$ is invariant under $w_t$, for each $t\geq 0$, then $D$ is $\Phi|_D$-like. Using this fact we add the equivalent part in Result \ref{Rungepairphilikedomains}. 
\begin{result}\cite[Theorem~1.1]{HamadaIancuKohr}\label{Rungepairphilikedomains}
    Let $E\subset \Cpn$ be a $\Phi$-like domain.  Let $w$ be the semigroup on $E$ generated by $\Phi$. If $D\subseteq E $ is $\Phi|_{D}$-like then $(D,E)$ is a Runge pair. It is equivalent to say if $D$ is invariant under $w_t$, for each $t\geq 0$, then $(D,E)$ is a Runge pair. 
\end{result}
\begin{result}\cite[Corollary~4.12]{HamadaIancuKohr}\label{Rungepairinsemigroups}
  Let $E\subset \Cpn$ be a $\Phi$-like domain. Let $w$ be the semigroup on $E$ generated by $\Phi$. Then $(w_t(E),E)$ is a Runge pair, for all $t \geq 0$.  
\end{result}
As an analogue to $\Phi$-like domains on manifolds we define the following (see also \cite{CG-manifold}).
\begin{definition}\label{Definition- Manifolds that admits a vector field globally asymptotically stable}

Let $M$ be a complex manifold. We say that {\em $M$ admits a $\mathbb{R}_+$-complete holomorphic vector field $V$ with a unique equilibrium point $p$ which is globally asymptotically stable on $M$} if \begin{enumerate}
    \item  the initial value problem $\frac{\partial \theta}{\partial t}(t,z)= V(\theta(t,z)), \ t\geq 0,\ \theta(0,z)=z\in M$, has a solution for all $t\in[0,\infty)$ and for all $z\in M$.
    \item $\underset{t\rightarrow \infty}{lim}\theta(t,z)=p$ for each $z\in M$.
    \item For any neighborhood $V_p$ of $p$ in $M$, there exists a neighborhood $U_p$ of $p$ such that $\theta(t,z)\in V_p, \ \forall t\geq 0, \ \forall z\in U_p$. 
\end{enumerate} 
 \end{definition}
 The map $\theta:\mathbb{R}_+\times M\rightarrow M$ in condition $(1)$ is called {\em flow map } of the vector field $V$. It is well-known that it satisfies $\theta(t+s,z)=\theta(t,\theta(s,z)), \forall t,s\geq 0, \ z\in M .$


The following definition is taken from \cite{DocquierGrauert}. It was introduced by Docquier and Grauert. We shall use it in the proof of Theorem \ref{Checking Runge pairs for R-actions}.
\begin{definition}\cite[Definition~20]{DocquierGrauert}\label{definition-semicontinuously holomorphically extendable}
    
    Let $M$ be a non-empty open subset of a complex manifold $\tilde{M}$. Then we say that $M$ is \textit{semicontinuously holomorphically extendable} to $\tilde{M}$ by means of a family $(M_t)_{0\leq t\leq 1}$ of non empty open subsets of $\tilde{M}$ if the following hold:
    \begin{enumerate}
        \item $M_t$ is a Stein manifold for all $t$ in a dense subset of $[0,1]$,
        \item $M_0=M$ and $\bigcup_{0\leq t\leq 1}M_t=\tilde{M}$,
        \item $M_s\subset M_t$ for all $0\leq s< t\leq 1$,
        \item $\bigcup_{0\leq t<t_0}M_t$ is a union of connected components of $M_{t_0}$ for $0<t_0\leq 1$,
        \item $M_{t_0}$ is a union of connected components of interior part of $\bigcap_{t_0<t\leq 1}M_t$ for $0\leq t_0<1$.
    \end{enumerate}
\end{definition}
We take the following result from \cite{DocquierGrauert}. This is due to Docquier and Grauert.
\begin{result}\cite[Satz 17-Satz 19]{DocquierGrauert}\label{Runge pair by semicontinuous extension}
    Let $M$ is a non-empty Stein open subset of a Stein manifold $\tilde{M}$. If $M$ is semicontinuously holomorphically extendable to $\tilde{M}$, then $(M,\tilde{M})$ is a Runge pair. 
\end{result}
The following result is well-known. This was shown by K. Stein in \cite{Stein} (see also \cite{DocquierGrauert}).
\begin{result}\cite[Satz~1.2,1.3]{Stein}\label{Stein property of increasing union}
    Let $M$ be a complex manifold. Assume that $M=\bigcup_{j\in \mathbb{N}}M_j$, each $M_j$ is a Stein domain in $M$, and for each $j\in \mathbb{N}$ we have  $M_j\subset M_{j+1}$, and $(M_j,M_{j+1})$ is a Runge pair . Then $M$ is a Stein manifold. Also $(M_j,M)$ is a Runge pair, for each $j\in \mathbb{N}$. 
\end{result}


 We now mention some results from literature to show the existence of $\Cpn$-valued solutions of the Loewner PDE on complete hyperbolic domains biholomorphic to Runge domains. Our survey here is mainly from  \cite{ArosioBracciWold}.

Let $D\subset \Cpn$ be a domain. A holomorphic vector field $H$ on $D$ is called \textit{semicomplete$ ($or $\mathbb{R}_+$-complete)} if the Cauchy problem \begin{equation}
    \frac{d}{ds}w(s)=H(w(s)), \ w(0)=z_0
\end{equation}
has a solution $w$ defined for all $s\in [0,+\infty)$ and for all $z_0\in D$.

 \begin{definition}\label{Def-Herglotz vector field}
    Let $D\subset \Cpn$ be a domain. A \textit{Herglotz vector field of order $d\in [1,\infty]$} on D is a mapping $G: D\times \mathbb{R}_+ \rightarrow \Cpn$ with the following properties:
     \begin{enumerate}
         \item[(i)] 
       The mapping $G(z,\cdot)$ is measurable on $\mathbb{R}^+$ for all $z\in D$.
       \item[(ii)]  The mapping $G(\cdot,t)$ is holomorphic vector field on $D$ for all $t \in \mathbb{R}^+$.
       \item[(iii)]  For any compact set $K\subset D$ and all $T>0$, there exists a function $C_{T,K} \in L^d([0,T],\mathbb{R}_+)$ such that 
        \begin{equation*}
            ||G(z,t)||\leq C_{T,K}(t), \ \ \ z\in K, \ a.e. \ t \in[0,T].
        \end{equation*}
    \item[(iv)]   $D\in z\rightarrow G(z,t)$ is semicomplete for almost every $t\in [0,+\infty)$.
 \end{enumerate}
\end{definition}
\begin{definition}\label{Definition-evolution family}
    Let $D\subset \Cpn$ be a domain. A family $(\phi_{s,t})_{0\leq s\leq t}$ of holomorphic self-mappings of $D$ is an \textit{evolution family of order $d\in[1,+\infty]$} if it satisfies
    \begin{equation*}
        \phi_{s,s}=id, \ \ \phi_{s,t}=\phi_{u,t}\circ\phi_{s,u}, \ \ 0\leq s\leq u\leq t, 
    \end{equation*}
    and if for any $T>0$ and for any compact set $K\subset\subset D$ there exists a function $c_{T,K}\in  L^d([0,T],\mathbb{R}_+)$ such that
    \begin{equation}
        ||\phi_{s,t}(z)-\phi_{s,u}(z)||\leq \int_u^tc_{T,K}(\xi)d\xi, \ \ z\in K, \ 0\leq s\leq u\leq t\leq T.
    \end{equation}
\end{definition}
The following result is required in our proof of Theorem \ref{Solution of Loewner on holomorphically convex domains in density property}. This shows an one-one correspondence between the Herglotz vector fields of order $d$ and evolution family of order $d$. It is taken from \cite{ArosioBracci}(see also \cite{BracciContrerasMadrigal2}).
\begin{result}\cite[Theorem~1.5]{ArosioBracci} \label{Starlike-existence of phi}
    Let $D$ be a complete hyperbolic domain in $\Cpn$. Then for any Herglotz vector field $G(z,t)$ of order $d\in [1,\infty]$ there exists a unique evolution family $(\phi_{s,t})$ of order $d$ over $D$ such that for all $z\in D$
    \begin{equation}\label{Loewner ODE}
        \frac{\partial\phi_{s,t}}{\partial t}(z)=G(\phi_{s,t}(z),t) \ \ a.e. \ t\in [s,+\infty).
    \end{equation}
    Conversely, for any evolution family $(\phi_{s,t})$ of order $d\in [1,\infty]$ over $D$ there exists a Herglotz vector field $G$ of order $d$ such that (\ref{Loewner ODE}) is satisfied. Moreover, if $H$ is another weak holomorphic vector field which satisfies (\ref{Loewner ODE}) then $G(z,t)=H(z,t)$ for all $z\in D$ and almost every $t\in \mathbb{R^{+}}$.
\end{result}
We now define the Loewner PDE and its solutions.
\begin{definition}
        Let $D$ be a complete hyperbolic domain in $\Cpn$. The partial differential equation 
    \begin{equation}\label{Loewner PDE}
      \frac{\partial f_t}{\partial t}(z)=-df_t(z)G(z,t), \ a.e \ t\geq 0,\ z\in D, 
     \end{equation}
     where $G(z,t)$ is a Herglotz vector field of order $d\in [1,+\infty]$, is called the Loewner PDE. A $solution$ to (\ref{Loewner PDE}) is a family of holomorphic mappings $(f_t)$ from $D$ to a complex manifold $Q$ of dimension $n$ such that the following holds:
     \begin{itemize}
  \item[(i)]  The mapping $t\rightarrow f_t$ is continuous with respect to the compact open topology in $Hol(D,Q)$.
  \item [(ii)] For all fixed $z\in D$ the mapping $t\rightarrow f_t(z)$ is locally absolutely continuous in $\mathbb{R}_+$.
   \item [(iii)] For all fixed $z\in D$ equality (\ref{Loewner PDE}) holds almost everywhere in $\mathbb{R}_+$.
     \end{itemize}
     \end{definition}
The following proposition is mentioned in \cite{ArosioBracciWold}. It is the generalized version of Proposition $2.3$ in  \cite{ContrerasMadrigalGumenyuk}. It also follows from different results of \cite{ArosioBracciHamadaKohr}.
     \begin{result}\cite[Proposition~2.6]{ArosioBracciWold}\label{Properties of solutions of Loewner chain}
         Let $D\subset \Cpn$ be a complete hyperbolic domain, let $Q$ be a complex manifold of dimension $n$ endowed with a Hermitian distance $d_Q$. Let $G(z,t)$ be a Herglotz vector field on $D$ of order $d\in [1,+\infty]$. Let $(f_t:D\rightarrow Q)$ be a solution to the Loewner PDE (\ref{Loewner PDE}).  Then 
         \begin{itemize}
         \item [(i)] The family $(f_t:D\rightarrow Q)$ is of order $d$, that is for any compact set $K\subset D$ and all $T>0$, there exists a function $c_{K,T}\in L^d([0,T],\mathbb{R}_+)$ such that 
         \[
         d_Q(f_s(z),f_t(z))\leq \int^t_s C_{K,T}(\zeta)d\zeta, \ \ z\in K, 0\leq s \leq t\leq T.
         \]
         
         \item[(ii)] There exists a set of zero measure $E\subset \mathbb{R}_+$ such that for all $z\in D$ and all $t\in \mathbb{R}_+\setminus E$ the partial derivative $\partial f_t(z)/\partial t$ exists and equality \ref{Loewner PDE} holds.
         \smallskip
         
         \item[(iii)] If $(\phi_{s,t}:D\rightarrow D)$ is the evolution family which solves the Loewner ODE (\ref{Loewner ODE}), then 
         \[
         f_s=f_t\circ \phi_{s,t}, \ \ 0\leq s \leq t.
         \]
         \end{itemize}
     \end{result}
 The following theorem is extremely important for our work as it has  reduced the problem of solving the Loewner PDE in a complete hyperbolic domain in $\Cpn$ to the embedding problem
of $n$-dimensional Loewner range into $\Cpn$. The following result is written with various results of \cite{ArosioBracci}, \cite{ArosioBracciHamadaKohr}  and it is mentioned in \cite{ArosioBracciWold}.
\begin{result}\cite[Theorem~2.8]{ArosioBracciWold}\label{Starlike-existence of univalent family for Loewner PDE}
Let $D\subset \Cpn$ be a complete hyperbolic domain and $G(z,t)$  be a Herglotz vector field of order $d\in [1,\infty]$ on $D$. Let $(\phi_{s,t}:D\rightarrow D)$ be the evolution family (necessarily of order $d$) which solves the Loewner ODE (\ref{Loewner ODE}).\\ If $Q$ is a complex manifold of dimension $n$ and $(f_t:D\rightarrow Q)$ is a family of univalent mappings satisfying the functional equation  $f_s=f_t\circ\phi_{s,t}$ $(0\leq s\leq t)$ then $(f_t)$ is a solution of order $d$ to the Loewner PDE (\ref{Loewner PDE}). \\ Furthermore there always exists a family of univalent mappings $(f_t:D\rightarrow N)$ satisfying the functional equation   $f_s=f_t\circ\phi_{s,t}$ $(0\leq s\leq t)$ with values in an $n$-dimensional complex manifold $N$ which depends on $G(z,t)$.
\end{result}
\noindent   The manifold $\bigcup_{t\geq 0}f_t(D)$ is called the Loewner Range associated to the evolution family $(\phi_{s,t})$ $(\text{or the Herglotz vector field } G(z,t))$.

 \begin{result}\cite[Corollary~5.4]{ArosioBracciHamadaKohr}\label{Solution of Loewner PDE with values in a manifold}
         Let $D$ be a complete hyperbolic domain in $\Cpn$. Let $G:D\times \mathbb{R}_+\rightarrow\Cpn$ be a Herglotz vector field of order $d\in [1,+\infty]$. Then there exists a Loewner chain $(f_t: D\rightarrow N)$ of order $d$ which solves the Loewner PDE
          \begin{equation}\label{Loewner PDE 2}
      \frac{\partial f_t}{\partial t}(z)=-df_t(z)G(z,t),\ \ a.e. \  t\geq 0, \ \forall z\in D,  
     \end{equation} where $N=\bigcup_{t\geq 0}f_t(D)$ is a complex manifold of dimension $n$ and any other solution to (\ref{Loewner PDE 2}) with values in a complex manifold $Q$ is of the form $(\Lambda\circ f_t)$ where $\Lambda : N\rightarrow Q$ is holomorphic.
     \end{result}
   
We will use the next result in the proof of Theorem \ref{Solution of Loewner on holomorphically convex domains in density property}. Since every complete hyperbolic manifold is taut (see \cite{Kiernan}), the following result is also valid for any complete hyperbolic manifold. It follows from the definition of evolution family (see Definition \ref{Definition-evolution family}) that any evolution family of order $\infty$ is an evolution family of order $d=1$. Hence the following result is also true for evolution family of order $\infty$.
\begin{result}\cite[Lemma ~2]{BracciContrerasMadrigal2}\label{continuity of phis,t}
    Let $M$ be a taut complex manifold. Let $(\phi_{s,t})$ be an evolution family of order $d\geq1$ in $M$. The map $(s,t)\rightarrow \phi_{s,t}\in Hol(M,M)$ is jointly continuous. Namely, given a compact set $K\subset M$ and two sequences $\{s_n\},\{t_n\}$ in $[0,+\infty)$, with $0\leq s_n \leq t_n, \ s_n \rightarrow s, \ t_n \rightarrow t$, then $\lim_{n\rightarrow \infty}\phi_{s_n,t_n}=\phi_{s,t}$ uniformly on $K$.
\end{result}

The following result is shown by H. Hamada in \cite{Hamada}. We will use this result in our proof of Theorem~\ref{Solution of Loewner on holomorphically convex domains in density property}.
\begin{result}\cite[Theorem 5.1]{Hamada}\label{Runge pairs in Loewner range Hamada}
    Let $M$ be a complete hyperbolic Stein manifold and $Q$ be a complex manifold of the same dimension. Let $(f_t:M\rightarrow Q)$ be a Loewner chain of order $d\in [1,\infty]$. Then $(f_{s_1}(M),f_{s_2}(M))$ is a Runge pair for $0\leq s_1 <s_2$. 
    \end{result}
       Note that, a Solution of the Loewner PDE (\ref{Loewner PDE}) for a Herglotz vector field of order $d\in [1,\infty]$, is a Loewner chain of order $d$  \cite[Definition~2.8]{Hamada}.  This follows from Result \ref{Properties of solutions of Loewner chain}.


\section{Approximations and Runge embeddings }\label{sec-main}
In this section we will prove Theorem~\ref{Main theorem for embedding Stein manifolds}.

\subsection{Approximation of holomorphic embeddings of same dimension}

\begin{proposition}
   \label{Continuous isotopy and approximation of Y valued biholomorphisms}
    Let $\Omega$ and $\Ot$ be two non-empty Stein domains in a complex manifold $M$ of dimension $n$, $n\geq2$, such that $\Omega\subset\Ot$. Let $H:[0,1]\times \Omega \rightarrow \Omega $ be a continuous isotopy of injective holomorphic maps from $\Omega$ into $\Omega$ such that 
    \begin{itemize}
   \item[(i)] $H(0,z)=z$ for all $z\in\Omega$; 
   \item [(ii)] $H_1 :\Omega\rightarrow\Omega$ extends to an injective holomorphic map $\tilde{H_1}: \tilde{\Omega}\rightarrow\Omega$; and
   \item[(iii)]$(H_t(\Omega),\Omega)$, $(\tilde{H_1}(\Ot),\Omega)$ are Runge pairs in $M$ for all $t\in [0,1]$.
   \end{itemize}
    Then, for any Stein manifold $W$ of dimension $n$ with the density property and any holomorphic embedding $F: \Omega \rightarrow W$ such that $(F(\Omega), W)$ is a Runge pair, can be approximated uniformly on compacts of $\Omega$ by holomorphic embeddings of $\Ot$ into $W$ whose images are Runge in $W$. In other words, $\mathcal{E}_{\mathcal{R}}(\Omega,W)\subset \overline{\mathcal{E}_{\mathcal{R}}(\tilde{\Omega},W)}|_{\Omega}$ in $\mathcal{E}(\Omega,W)$.
 \end{proposition}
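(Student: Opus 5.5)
The idea is to transport the isotopy $H$ into $W$ by $F$, approximate the transported terminal map by automorphisms of $W$, and then precompose with the extension $\tilde{H}_1$. Fix $F\in\mathcal{E}_{\mathcal{R}}(\Omega,W)$, a compact $K\subset\Omega$ and $\epsilon>0$. On the Stein Runge domain $F(\Omega)\subset W$ we consider the isotopy
\[
G_t=F\circ H_t\circ F^{-1}:F(\Omega)\rightarrow W,\qquad t\in[0,1].
\]
It is continuous in $t$, each $G_t$ is injective holomorphic, and $G_0=Id_{F(\Omega)}$.

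We first check that each $G_t(F(\Omega))=F(H_t(\Omega))$ is Stein and Runge in $W$. It is Stein because it is biholomorphic to $H_t(\Omega)$. That image is Stein since $(H_t(\Omega),\Omega)$ is a Runge pair in the Stein manifold $\Omega$ (an open Runge subset of a Stein manifold is holomorphically convex, hence Stein). For the Runge property, Lemma~\ref{Rungepair biholomorphic invariant} applied to $F$ shows that $(F(H_t(\Omega)),F(\Omega))$ is a Runge pair. Combining this with $(F(\Omega),W)$ Runge by transitivity, $F(H_t(\Omega))$ is Runge in $W$.

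By Result~\ref{Continuous isotopy in Manifold with the density}, $G_1$ is then approximable by automorphisms of $W$ uniformly on compacts of $F(\Omega)$. Equivalently, Lemma~\ref{Continuous isotopy in Manifold with the density-modified} applied to this isotopy shows that $G_1^{-1}=F\circ H_1^{-1}\circ F^{-1}$ is approximable by automorphisms on compacts of $F(H_1(\Omega))$. We use the latter form. Choose $\alpha\in Aut(W)$ close to $G_1^{-1}$ on the compact set $F(H_1(K))\subset F(H_1(\Omega))$, and set
\[
\Psi=\alpha\circ F\circ\tilde{H}_1:\tilde{\Omega}\rightarrow W.
\]
This is well defined because $\tilde{H}_1(\tilde{\Omega})\subset\Omega$, and it is injective holomorphic as a composition of injective maps. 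On $K$ we have $\tilde{H}_1=H_1$, so $\Psi=\alpha\circ F\circ H_1$ is close to $G_1^{-1}\circ F\circ H_1=F\circ H_1^{-1}\circ H_1=F$ uniformly on $K$. Its image $\alpha(F(\tilde{H}_1(\tilde{\Omega})))$ is Runge in $W$: Lemma~\ref{Rungepair biholomorphic invariant} and the hypothesis $(\tilde{H}_1(\tilde{\Omega}),\Omega)$ Runge give $(F(\tilde{H}_1(\tilde{\Omega})),F(\Omega))$ Runge. Transitivity with $(F(\Omega),W)$ then makes $F(\tilde{H}_1(\tilde{\Omega}))$ Runge in $W$, and automorphisms preserve the Runge property.

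The main obstacle is the direction of approximation. Approximating $G_1$ itself would produce $\alpha^{-1}$, and then uniform closeness of $\alpha$ to $G_1$ on compacts does not immediately give closeness of $\alpha^{-1}$ to $G_1^{-1}$ on a prescribed compact. This is exactly why Lemma~\ref{Continuous isotopy in Manifold with the density-modified} is used, since it approximates $G_1^{-1}$ directly on $F(H_1(\Omega))$. A minor additional check is that Runge pairs are transitive and that a Runge open subset of a Stein manifold is Stein. Both are standard, via $\mathcal{O}$-convexity of compact hulls.
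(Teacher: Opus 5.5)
Your proposal is correct and is essentially the paper's proof. You conjugate $H_t$ by $F$, apply Lemma~\ref{Continuous isotopy in Manifold with the density-modified} to approximate $G_1^{-1}$ by automorphisms on $F(H_1(\Omega))$, compose with $F\circ\tilde{H}_1$, and obtain the Runge property of the image from Lemma~\ref{Rungepair biholomorphic invariant} plus transitivity, all exactly as the paper does.

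One correction is needed. Your stated reason that $H_t(\Omega)$ is Stein is false in general: a Runge pair in the paper's density sense need not have a Stein smaller member (the Hartogs figure inside the bidisc is a counterexample). The fact you need is immediate anyway, because $H_t(\Omega)$ is biholomorphic to the Stein manifold $\Omega$ via the injective map $H_t$. The paper itself leaves this Stein check implicit.
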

\begin{proof}
 If $\mathcal{E}_{\mathcal{R}}(\Omega,W)=\emptyset$ then the conclusion follows trivially. Assume $\mathcal{E}_{\mathcal{R}}(\Omega,W)\neq \emptyset$ and $F\in \mathcal{E}_{\mathcal{R}}(\Omega,W)$. 
  Let us now construct a new isotopy from the given $H$ by,  
  \begin{equation*}
   G:[0,1]\times F(\Omega)\rightarrow W , \ \    G(t,w)=F(H(t,F^{-1}(w))).
    \end{equation*}
    $G$ is clearly continuous and each $G_t=G(t,\cdot):F(\Omega)\rightarrow W$ is an injective holomorphic on $F(\Omega)$.
    Now on $\Omega$ we have,
    \begin{equation}\label{F-related1}
    G_t\circ F=F\circ H_t , \ \  t\in [0,1].
     \end{equation}
We first show that $G_t(F(\Omega))$ is Runge in $W$ for each $t \in [0,1]$. It is equivalent to show $F(H_t(\Omega))$ is Runge in $W$ by (\ref{F-related1}).
Since $(H_t(\Omega),\Omega)$ is a Runge pair in $M$ for each $t\in [0,1]$. Hence, we obtain that $(F(H_t(\Omega)),F(\Omega))$ is a Runge pair in $W$.
Since $F(\Omega)$ is Runge domain in $W$, we get $F(H_t(\Omega))$ is Runge in $W$.
Now, we have a continuous isotopy $G_t$ on $F(\Omega)\subset W$ that satisfies $G_0(w)=w, \ \forall w\in F(\Omega)$, and each $G_t(F(\Omega))$ is Runge domain in $W$. Using  Lemma \ref{Continuous isotopy in Manifold with the density-modified}, we get a sequence $\{\Psi_k\}_{k\in \mathbb{N}}$ of holomorphic automorphisms of $W$ that converges to $G^{-1}_1$ locally uniformly on $G_1(F(\Omega))$.
Take a sequence of holomorphic maps from $F_k:\tilde{\Omega}\rightarrow W $, $k\in \mathbb{N},$ defined by, $$F_k=\Psi_k\circ F\circ \tilde{H}_1 .$$
Each $F_k$ is well defined as $\tilde{H}_1(\Ot)\subset \Omega$ , and is an injective holomorphic map from $\Ot$ into $W$.
On $\Omega$ we have, $$F_k|_{\Omega}=\Psi_k\circ F\circ H_1=\Psi_k\circ G_1\circ F.$$
The sequence $\Psi_k$ converges to $G^{-1}_1$ locally uniformly on $G_1(F(\Omega))$. This implies $\Psi_k\circ G_1$ converges to $Id_{F(\Omega)}$ locally uniformly on $F(\Omega)$. Hence $\Psi_k\circ G_1\circ F=F_k|_{\Omega}$ converges to $F$ locally uniformly on $\Omega$.

Now we show for each $k\in \mathbb{N}$, $F_k(\Ot)$ is a Runge domain in $W$. 
Since $(\tilde{H}_1(\Ot),\Omega)$ is a Runge pair in $M$, this gives $(F(\tilde{H}_1(\Ot)),F(\Omega))$ is a Runge pair. Since $F(\Omega)$ is Runge in $W$, $F(\tilde{H}_1(\tilde{\Omega}))$ is a Runge domain in $W$.
So that $\Psi_k(F(\tilde{H}_1(\Ot)))=F_k(\Ot)$ is a Runge domain in $W$ for each $k\in \natu$ as image of a Runge domain  under an automorphism of $W$. Therefore we get a sequence $\{F_k\}_{k\in \mathbb{N}}$ of holomorphic embeddings from $\tilde{\Omega}$ into $W$ that approximates $F$ uniformly on compacts of $\Omega$ and each $F_k(\tilde{\Omega})$ is Runge in $W$. In other words, the closure of $\mathcal{E}_{\mathcal{R}}(\tilde{\Omega},W)|_{\Omega}$ in $\mathcal{E}(\Omega,W)$ contains $\mathcal{E}_{\mathcal{R}}(\Omega,W)$.

\end{proof}
\begin{remark}
    It is not required to assume  $\Omega$, $\tilde{\Omega}$ to be Stein domains in Proposition~\ref{Continuous isotopy and approximation of Y valued biholomorphisms} if $W=\cplx^n$. The proof is exactly same but we need to use Result \ref{Forstneric Rosay continuous isotopy} in the proof in place of Result \ref{Continuous isotopy in Manifold with the density}.
\end{remark}

\subsection{Embedding of the increasing union}
We now prove a theorem to extend the process of approximation for an increasing sequence of domains in a complex manifold.  
 

\begin{theorem}\label{Union embedding in domain with Density}
    Let $X$ and $W$ be two Stein manifolds of the same dimension. Let $\{\Omega_j\}_{j\in \mathbb{N}}$ be a family of connected non-empty open subsets of $X$ such that $\Omega_1\subset \hdots \subset \Omega_j\subset \hdots \subset\cup_{j\in \mathbb{N}}\Omega_j=X$. Assume that $\mathcal{E}_{\mathcal{R}}(\Omega_1,W)\neq \emptyset$ and $\mathcal{E}_{\mathcal{R}}(\Omega_j,W)\subset\overline{{\mathcal{E}}_{\mathcal{R}}(\Omega_{j+1},W)}|_{\Omega_j}$ in $\mathcal{E}(\Omega_j,W)$ for each $j\in \mathbb{N}$, then $X$ admits a holomorphic Stein and Runge embedding in $W$.
    Furthermore, for any $j\in \mathbb{N}$, $\mathcal{E}_{\mathcal{R}}(\Omega_j,W)\subset\overline{{\mathcal{E}}_{\mathcal{R}}(X,W)}|_{\Omega_j}$ in $\mathcal{E}(\Omega_j,W)$.
\end{theorem}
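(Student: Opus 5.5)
The plan is a standard inductive push-out construction. Fix compact exhaustions and a target $F\in\mathcal{E}_{\mathcal{R}}(\Omega_s,W)$; for the existence statement take $s=1$ and any $F\in\mathcal{E}_{\mathcal{R}}(\Omega_1,W)\neq\emptyset$. Choose compacts $K_j\subset\Omega_j$ with $K_j\subset \mathrm{int}K_{j+1}$ and $\bigcup_j K_j=X$; this is possible since the $\Omega_j$ are open and increasing. Fix also a point $p\in \mathrm{int}K_s$ and a complete metric $d$ on $W$.

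Set $\Psi_s=F$ and proceed inductively. Given $\Psi_j\in\mathcal{E}_{\mathcal{R}}(\Omega_j,W)$, the hypothesis $\mathcal{E}_{\mathcal{R}}(\Omega_j,W)\subset\overline{\mathcal{E}_{\mathcal{R}}(\Omega_{j+1},W)}|_{\Omega_j}$ provides $\Psi_{j+1}\in\mathcal{E}_{\mathcal{R}}(\Omega_{j+1},W)$ with $\sup_{K_j}d(\Psi_{j+1},\Psi_j)<\epsilon_j$. Then $\Psi_j$ converges locally uniformly on $X$ to a holomorphic $\Psi$, and $\sup_{K_s}d(\Psi,F)\le\sum_{j\ge s}\epsilon_j$ can be made arbitrarily small. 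Two properties of the limit must be forced by choosing the $\epsilon_j$ small. The first is injectivity. Make $\epsilon_j$ small enough, using Cauchy estimates in charts, that $D\Psi$ at $p$ stays within half the distance of $D\Psi_s(p)$ from the degenerate maps. Then $\Psi$ is non-degenerate at $p$, and Lemma~\ref{Hurwitz-non-degenerate limit of injective}, applied on each connected $\Omega_j$, gives injectivity on every $\Omega_j$ and hence on $X$.

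The second property, the Runge property of $\Psi(X)$, is the main obstacle. I would follow step (c) of the introduction. Since $\Psi_j(\Omega_j)$ is Runge and Stein in $W$, the set $L_j':=\widehat{\Psi_j(K_j)}_{\mathcal{O}(W)}$ is a compact subset of $\Psi_j(\Omega_j)$. After enlarging $K_j$ if necessary, I arrange $L_j'=\Psi_j(K_j)$, i.e.\ I replace $K_j$ by $\Psi_j^{-1}(L_j')$. This keeps the exhaustion property provided each $K_{j+1}$ is chosen after $\Psi_{j+1}$ and contains the previous sets. The $\mathcal{O}(W)$-convexity of $\Psi_j(K_j)$ is stable under small perturbation in the following sense: there is a Stein Runge neighborhood $U_j\Subset\Psi_j(\Omega_j)$, for example a sublevel set of a strictly plurisubharmonic exhaustion of $W$ intersected appropriately, containing $\Psi_j(K_j)$. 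Choose the later $\epsilon_k$ so that $\Psi_k(K_j)$ stays inside $\{\rho_j<0\}$ for a fixed plurisubharmonic $\rho_j$ on $W$ that separates $\Psi_j(K_j)$ from the complement of a compact neighborhood.

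Then $L_j:=\Psi(K_j)$ will have $\mathcal{O}(W)$-hull contained in $\Psi(X)$. More precisely, each compact subset of $\Psi(X)$ lies in some $L_j$, and I want $\mathcal{O}(W)|_{L_j}$ to be dense in $\mathcal{O}(L_j)$. I would prove this by approximating a function on a neighborhood of $L_j$: pull it back by $\Psi$, push it forward by $\Psi_k$ for large $k$ (using Lemma~\ref{eventual containment of compacts for manifolds} to get containment of images), and then use that $\Psi_k(\Omega_k)$ is Runge in $W$ together with the Oka--Weil theorem. Since $\Psi(X)=\bigcup_j \mathrm{int}\,L_j$ with each $L_j$ being $\mathcal{O}(W)$-convex, $\Psi(X)$ is Runge, and it is Stein as an increasing union of Runge Stein pieces (Result~\ref{Stein property of increasing union}).

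For the ``furthermore'' part, run the construction with $\sum_{j\ge s}\epsilon_j<1/m$ for each $m$. This produces elements of $\mathcal{E}_{\mathcal{R}}(X,W)$ that converge to $F$ on $K_s$. Because $K_s$ can be chosen to be any member of an exhaustion of $\Omega_s$, a diagonal argument gives locally uniform convergence on $\Omega_s$.
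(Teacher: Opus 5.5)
Your convergence argument, the injectivity argument (non-degeneracy at $p$ plus Lemma~\ref{Hurwitz-non-degenerate limit of injective} on each $\Omega_j$) and the bookkeeping for the ``furthermore'' part are fine. The gap is in the Runge step, and it has three parts.

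\emph{An assumption you do not have.} The enlargement $K_j\mapsto\Psi_j^{-1}\bigl(\widehat{\Psi_j(K_j)}_{\mathcal{O}(W)}\bigr)$ rests on ``$\Psi_j(\Omega_j)$ is Runge and Stein''. The theorem only assumes the $\Omega_j$ are connected open sets. Only $X$ is Stein, and you never use that. A Runge domain need not be Stein, and then the hull can escape it. For example, the shell $\{1/2<|z|<1\}\subset\mathbb{C}^2$ is Runge (by Hartogs extension to the ball), yet the hull of the sphere $|z|=3/4$ is the closed ball.

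\emph{The hull is not shown to lie in $\Psi(X)$.} Even granting Stein $\Omega_j$, $\rho_j$ only confines $\widehat{\Psi(K_j)}_{\mathcal{O}(W)}$ to $\{\rho_j<0\}\Subset\Psi_j(\Omega_j)$. You give no reason why this set lies in $\Psi(X)$. Your estimates control $d(\Psi,\Psi_j)$ only on $K_j$, not on a neighbourhood of $K_j$. So nothing guarantees that $\Psi$ covers the points of $\{\rho_j<0\}$ lying outside $\Psi_j(K_j)$. To get this you would need to approximate on a larger compact $K_j^+$, and use a version of Lemma~\ref{eventual containment of compacts for manifolds} that is uniform over all injective maps $\delta$-close to $\Psi_j$.

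\emph{The transport step does not approximate $g$.} In the ``pull back by $\Psi$, push forward by $\Psi_k$'' step, approximating $g\circ\Psi\circ\Psi_k^{-1}$ by some $G\in\mathcal{O}(W)$ on $\Psi_k(K_j)$ yields $G\circ\Psi_k\approx g\circ\Psi$ on $K_j$. That says nothing about $G-g$ on $\Psi(K_j)$.

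The missing idea is to spend the Steinness of $X$ at the outset, as the paper does. Take the exhaustion $K_j$ to be $\mathcal{O}(X)$-convex and, after reindexing the $\Omega_j$, with $K_j\subset\Omega_j$. Then for every $k\ge m$, Oka--Weil shows that $\mathcal{O}(X)|_{K_m}\subset\mathcal{O}(\Omega_k)|_{K_m}$ is dense in $\mathcal{O}(K_m)$. Lemma~\ref{Rungepair biholomorphic invariant} and the Runge property of $\Psi_k(\Omega_k)$ then give density of $\mathcal{O}(W)$ in $\mathcal{O}(\Psi_k(K_m))$ for all later $k$ at once. No perturbation argument is needed, and no condition on the $\epsilon_j$ beyond what non-degeneracy requires.

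In fact $\Psi_k(K_m)$ is then even $\mathcal{O}(W)$-convex. Its hull meets the Runge domain $\Psi_k(\Omega_k)$ only in $\Psi_k(K_m)$, and an $\mathcal{O}(W)$-hull cannot have a relatively clopen part disjoint from the original compact. This is exactly the uniform-in-$k$ convexity your perturbation argument was trying to manufacture.

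To finish, apply Lemma~\ref{eventual containment of compacts for manifolds} to $\Psi_k\circ\Psi^{-1}\to\mathrm{Id}$ on $\Psi(K_{N+1}^{\circ})$, together with uniform convergence on $K_{N+1}$. This gives $\Psi(K_N)\subset\Psi_k(K_{N+1})\subset\Psi(X)$ for large $k$. Hence any $g\in\mathcal{O}(\Psi(X))$ is approximated by elements of $\mathcal{O}(W)$ directly on $\Psi_k(K_{N+1})\supset\Psi(K_N)$.
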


\begin{proof}
    Since $X$ is a Stein manifold there exists a compact exhaustion $\{K_j\}_{j\in \mathbb{N}}$ of $X$ such that for each $j\in \mathbb{N}$ the compact $K_j$ is $\mathcal{O}(X)$-convex, $K_1\neq \emptyset$ and $K_j\subset K_{j+1}^{\circ}$. Fix $j\in \mathbb{N}$. We check that there exists a $\psi(j)\in \mathbb{N} $ such that $K_j\subset \Omega_{\psi(j)}$. Let us assume that such $\psi(j)$ does not exist. Then there exists a sequence $\{x_m\}_{m\in \mathbb{N}}$ in $K_j$  such that $x_m\in \Omega_{m+1}\setminus\Omega_m$, for each $m\in\mathbb{N}$. Since $K_j$ is compact, it has a subsequence $\{x_{m_k}\}$ that converges to $x$ in $K_j$. Now  $x\in \Omega_{j_0}$ for some $j_0\in \mathbb{N}$. This implies that $x_{m_k}\in \Omega_{j_0}, \ \forall k\geq k_0$ for some $k_0\in \mathbb{N}.$ This contradicts the fact $x_{m_k}\in \Omega_{m_k+1}\setminus\Omega_{m_k}$ as $x_{m_k}\notin \Omega_{j_0},  \forall m_k\geq j_0$. So that there exists a $\psi(j)\in \mathbb{N}$ such that $K_j\subset\Omega_{\psi(j)}$.   We can choose $\psi:\mathbb{N}\rightarrow \mathbb{N}$ such that $\psi(j)<\psi(j+1).$ Also from the assumption it follows that $\mathcal{E}_{\mathcal{R}}(\Omega_{\psi(j)},W)\subset\overline{{\mathcal{E}}_{\mathcal{R}}(\Omega_{\psi(j+1)},W)}|_{\Omega_j}$ in $\mathcal{E}(\Omega_{\psi(j)},W)$. Therefore, without loss of generality, we may assume $K_j\subset \Omega_j$ for each $j\in \mathbb{N}.$
    
    We first construct a sequence of injective holomorphic maps that converges uniformly on each compact of $X$ to an injective holomorphic map from $X$ into $W$.
    Since $\mathcal{E}_{\mathcal{R}}(\Omega_1,W)\neq \emptyset$, let $\Phi_1:\Omega_1\rightarrow W$ be an injective holomorphic map such that $(\Phi_1(\Omega_1),W)$ is a Runge pair. Again   $\mathcal{E}_{\mathcal{R}}(\Omega_1,W)\subset\overline{{\mathcal{E}}_{\mathcal{R}}(\Omega_{2},W)}|_{\Omega_1}$ and $K_1\subset \Omega_1$. This implies that given any $\epsilon >0$, we can get a $\Phi_2: \Omega_2 \rightarrow W$ such that $(\Phi_2(\Omega_2),W)$ is a Runge pair in $W$ and  $$\underset{x\in K_1}{sup}d_W(\Phi_2(x),\Phi_1(x))<\frac{\epsilon}{2^2},$$ 
    where $d_W:W\times W\rightarrow \mathbb{R}_+$ be the distance function that induces the topology on $W$. Continuing the procedure we obtain a sequence $\{\Phi_j\}_{j\in \mathbb{N}}$  $(\Phi_j: \Omega_j \rightarrow W)$ such that $(\Phi_{j}(\Omega_{j}),W)$ is Runge pair in $W\subseteq\Cpn$ and it satisfies that $$\underset{x\in K_j}{sup}d_W(\Phi_{j+1}(x),\Phi_j(x))<\frac{\epsilon}{2^{j+1}}.$$
 Note that, the sequence $\{\Phi_j\}_{j\in \mathbb{N}}$ is uniformly Cauchy in each compact of $X$. So that it converges uniformly on each compact set of $X$ to a holomorphic map $\Phi: X \rightarrow W$.\\
  If we choose $\epsilon$ small enough, then the limit $\Phi$ will be non-degenerate on $X$. Since $\Phi$ is non-degenerate and limit of injective holomorphic maps, using Lemma \ref{Hurwitz-non-degenerate limit of injective} it follows that it is itself an injective holomorphic map from $X$ into $W$.\\
Next, we claim that $\Phi(X)$ is Runge domain in $W$. Let $j\geq m$.
 Since $K_m$ is $\mathcal{O}(X)$-convex, by Oka-Weil theorem we get that $\mathcal{O}(X)|_{K_m}$ is dense in $\mathcal{O}(K_m)$. Again by assumption we know that \[K_m\subset \Omega_m \subset \Omega_j\subset X.\]
 It implies that $\mathcal{O}(X)|_{K_m}\subset \mathcal{O}(\Omega_j)|_{K_m}\subset \mathcal{O}(K_m)$. Therefore we get that $\mathcal{O}(\Omega_j)|_{K_m}$ is dense in $\mathcal{O}(K_m)$, $\forall j\geq m$.
Since $\Phi_j:\Omega_j\rightarrow W$ is an injective holomorphic map and  $\Phi_j(K_m)\subset\Phi_j(\Omega_j)$, by Lemma \ref{Rungepair biholomorphic invariant}, it follows that $\mathcal{O}(\Phi_j(\Omega_j))|_{\Phi_j(K_m)}$ is dense $\mathcal{O}(\Phi_j(K_m))$. So that, for any $f\in \mathcal{O}(\Phi_j(K_m))$ there exists a sequence $\{f_k\}_{k\in \mathbb{N}}$ in $\mathcal{O}(\Phi_j(\Omega_j))$ that converges to $f$ as $k$ tends to $\infty$, uniformly on $\Phi_j(K_m)$. Again we know that $(\Phi_j(\Omega_j),W)$ is Runge pair in $W$. Hence, for each $f_k\in \mathcal{O}(\Phi_j(\Omega_j))$ there exists a sequence $\{g_{k,i}\}_{i\in \mathbb{N}}$ in $\mathcal{O}(W)$ such that $g_{k,i}$ converges to $f_k$ as $i$ tends to $\infty$, uniformly on $\Phi_j(K_m)$. This implies that $g_{k,k}$ converges to $f$ as $k$ tends to $\infty$, uniformly on $\Phi_j(K_m)$. Therefore,
 for each $j,m\in \mathbb{N}$ with $j\geq m$, we conclude that \[\mathcal{O}(W)|_{\Phi_j(K_m)} \text{ is dense }\mathcal{O}(\Phi_j(K_m)).\]
 Next we show that for each $N$, there exists an integer $j(N)\in \mathbb{N}$ such that $$\Phi(K_N)\subset \Phi_{j(N)}(K_{N+1})\subset \Phi(X).$$

For $j>N$, $\Phi_j\circ\Phi^{-1}:\Phi(K^{\circ}_{N+1})\rightarrow W$ is a sequence of injective holomorphic maps converging to $Id$ on $\Phi(K^{\circ}_{N+1})$. For the compact set $\Phi(K_N)\subset Id(\Phi(K^{\circ}_{N+1}))$, by Lemma \ref{eventual containment of compacts for manifolds} there exists $m_1\in \mathbb{N}$ such that $$\Phi(K_N)\subset \Phi_j\circ\Phi^{-1}(\Phi(K^{\circ}_{N+1})) \ \forall j\geq m_1.$$
 \[ \Rightarrow \Phi(K_N)\subset \Phi_j(K^{\circ}_{N+1})\ \forall j\geq m_1.\]
 By construction, $\Phi$ is non-degenerate at every point of $X$. It implies that $\Phi(X)$ is open in $W$. Since $\Phi(K_{N+1})$ is compact subset of $\Phi(X)$, take an open subset $U$  in $\Phi(X)$ such that $\Phi(K_{N+1})\subset U\subset \Phi(X)$. Since $\Phi_j$ converges to $\Phi$ uniformly on $K_{N+1}$, so that there exists $m_2$ such that $\Phi_j(K_{N+1})\subset U$, $\forall j\geq m_2$.
Take $j(N)=max\{m_1,m_2,N+1\}$. Then we get that $$\Phi(K_N)\subset \Phi_{j(N)}(K_{N+1})\subset \Phi(X).$$ For each $N\in \mathbb{N}$, let us denote $L_N=\Phi_{j(N)}(K_{N+1})$. Then $L_N\subset \Phi(X)$, and for any compact set $L\subset\Phi(X)$, there exists a $l\in \mathbb{N}$ such that \[L\subset \Phi(K_l)\subset L_l=\Phi_{j_{(l)}}(K_{l+1})\subset \Phi(X).\] Since $\mathcal{O}(W)|_{L_l}$ is dense in $\mathcal{O}(L_l)$,  any $g\in \mathcal{O}(\Phi(X))$ can be approximated uniformly on $L\subset L_l$ by elements of $\mathcal{O}(W)$. Since $L$ is any compact set in $\Phi(X)$, we get that $(\Phi(X),W)$ is a Runge pair.
Therefore, we get  an injective holomorphic map $\Phi :X \rightarrow W$ such that $(\Phi(X),W)$ is a Runge pair. Clearly, $\Phi(X)$ is Stein as $X$ and $\Phi(X)$ is biholomorphic to each other.\\

 For the second part, fix $s\in \mathbb{N}$ and take any $F\in\mathcal{E}_{\mathcal{R}}(\Omega_s,W)$. Let $K$ be  any compact set in $\Omega_s$. Then there exists $q\in \mathbb{N}(>s)$ such that $K\subset K_j$ for all $j\geq q >s$.
 Again we have $\mathcal{E}_{\mathcal{R}}(\Omega_s,W)\subset\overline{{\mathcal{E}}_{\mathcal{R}}(\Omega_{q},W)}|_{\Omega_s}$ in $\mathcal{E}(\Omega_s,W)$. Given $\epsilon>0$ there exists a $F_{q}\in\mathcal{E}_{\mathcal{R}}(\Omega_q,W) $ such that \[\underset{x\in K}{sup}d_W(F_q(x),F(x))<\epsilon/2^{q+1}.\]
 We can choose a sequence $\{F_j\}_{j\geq q}$ $(F_j\in \mathcal{E}_{\mathcal{R}}(\Omega_j,W))$ such that $$\underset{x\in K_j}{sup}d_W(F_{j+1}(x),F_j(x))<\epsilon/2^{j+2}, \ \forall j\geq q.$$ Here $\{F_j\}_{j\geq q}$ is uniformly Cauchy in compacts of $X$. Hence $F_j$ converges to  a holomorphic map $\tilde{F}:X \rightarrow W$ locally uniformly on $X$. If we choose $\epsilon$ sufficiently small then $\tilde{F} $ is non degenerate and so injective from $X$ into $W$.
 In the same way as we did in the first part, we have $\tilde{F}(X)\subseteq W$ and $\tilde{F}(X)$ is Runge in $W$. We also obtain that for any $j\geq 0$,\begin{align*}
     \underset{x\in K}{sup}d_W(F_{j+q}(x),F(x))\ &<\underset{x\in K}{sup}d_W(F_{j+q}(x),F_{j-1+q}(x))+\dots+\underset{x\in K}{sup}d_W(F_{q}(x),F(x))\ \\
                    &<\frac{\epsilon}{2^{j+q+1}}+\dots+\frac{\epsilon}{2^{q+1}}<\frac{\epsilon}{2^{q}}.
 \end{align*}
 As $j$ tends to $\infty$, we get that $\underset{x\in K}{sup}d_W(\tilde{F}(x),F(x))<\epsilon$.
 As it is true for any compact $K\subset \Omega_s$ and arbitrarily small $\epsilon>0$, we get that $F\in \overline{{\mathcal{E}}_{\mathcal{R}}(X,W)}|_{\Omega_s}$.
\end{proof}

Now we can give a proof of our main theorem.

\begin{proof}[Proof of Theorem \ref{Main theorem for embedding Stein manifolds}]
    For any $j\in \mathbb{N}$, the pair $(\Omega_j,\Omega_{j+1})$ satisfies assumptions $(i),(ii),$ $(iii)$. So that we can use Proposition \ref{Continuous isotopy and approximation of Y valued biholomorphisms} to conclude that $\overline{\mathcal{E}_{\mathcal{R}}(\Omega_{j+1},W)}|_{\Omega_j}$ in $\mathcal{E}(\Omega_j,W)$ contains $\mathcal{E}_{\mathcal{R}}(\Omega_{j},W)$. By assumption, $\Omega_1\neq \emptyset$ and $\mathcal{E}_{\mathcal{R}}(\Omega_{1},W)\neq \emptyset$. Using Theorem \ref{Union embedding in domain with Density} it follows that $X$ admits a Stein and Runge embedding in $W$, and $\overline{\mathcal{E}_{\mathcal{R}}(X,W)}|_{\Omega_s}$ in $\mathcal{E}(\Omega_s,W)$ contains $\mathcal{E}_{\mathcal{R}}(\Omega_s,W)$ for any $s\in \mathbb{N}$.
    \end{proof}

\subsection{ Runge pairs generated by $(\mathbb{R},+)$-actions on Stein manifolds}
We use the following theorem to prove Theorem~\ref{cor-Approximation on action invariant domains of Cn}. It is helpful for us to bypass the Runge pair condition in Theorem~\ref{Main theorem for embedding Stein manifolds} in case of $(\mathbb{R},+)$ action on Stein manifolds.

\begin{theorem}\label{Checking Runge pairs for R-actions}
    Let $M$ be a complex manifold. $\theta:\mathbb{R}\times M\rightarrow M$ be a continuous group action of $(\mathbb{R},+)$ such that each $\theta_t=\theta(t,\cdot):M\rightarrow M\ (t\in \mathbb{R})$ is holomorphic. Assume that $\Omega$ is a Stein domain which is invariant under each $\theta _t$ for $t\geq 0 \ (\theta_t(z)\in \Omega$ if $z\in \Omega)$. Then $(\theta_{s_2}(\Omega),\theta_{s_1}(\Omega))$ is a Runge pair for $0\leq s_1\leq s_2$. 
\end{theorem}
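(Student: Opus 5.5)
We first reduce to the case $s_1=0$. Since $\theta_{s_1}:M\to M$ is a biholomorphism with inverse $\theta_{-s_1}$, and $\theta_{s_2}(\Omega)=\theta_{s_1}(\theta_{s_2-s_1}(\Omega))$, Lemma~\ref{Rungepair biholomorphic invariant} shows that $(\theta_{s_2}(\Omega),\theta_{s_1}(\Omega))$ is a Runge pair if and only if $(\theta_{s}(\Omega),\Omega)$ is a Runge pair, where $s=s_2-s_1\geq 0$. We may assume $s>0$. Put $\tilde M=\Omega$ and $M'=\theta_s(\Omega)$. Both are Stein, being biholomorphic to $\Omega$. By invariance, $M'\subset\Omega$.

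To show $(M',\Omega)$ is a Runge pair, we invoke the Docquier--Grauert criterion, Result~\ref{Runge pair by semicontinuous extension}. It suffices to exhibit $M'$ as semicontinuously holomorphically extendable to $\Omega$ in the sense of Definition~\ref{definition-semicontinuously holomorphically extendable}. We use the family
\[
M_t=\theta_{(1-t)s}(\Omega),\qquad 0\leq t\leq 1 .
\]
Then $M_0=\theta_s(\Omega)=M'$ and $M_1=\Omega$, and every $M_t$ is Stein, being biholomorphic to $\Omega$; this gives condition~(1). For $u\leq v$ we have $\theta_v(\Omega)=\theta_u(\theta_{v-u}(\Omega))\subset\theta_u(\Omega)$, so the family is increasing in $t$. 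This gives~(3), and also~(2), since the union is $M_1=\Omega$.

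The main work is conditions~(4) and~(5). Each $M_t$ is connected, so it suffices to prove the following two identities for $0<t_0\leq 1$ and $0\leq t_0<1$ respectively:
\[
\bigcup_{t<t_0}M_t=M_{t_0},\qquad M_{t_0}=\operatorname{int}\bigcap_{t>t_0}M_t .
\]
Write $a=(1-t_0)s$. After applying $\theta_{-a}$, which is a homeomorphism, the identities become the following two statements about $\Omega$.

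For~(4) we must show $\bigcup_{\epsilon>0}\theta_\epsilon(\Omega)=\Omega$. Take $z\in\Omega$. By continuity of $\theta$ and openness of $\Omega$, we have $\theta_{-\epsilon}(z)\in\Omega$ for small $\epsilon>0$. Then $z=\theta_\epsilon(\theta_{-\epsilon}(z))\in\theta_\epsilon(\Omega)$.

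For~(5) we must show $\operatorname{int}\bigcap_{\epsilon>0}\theta_{-\epsilon}(\Omega)=\Omega$. The inclusion $\supseteq$ holds because $\Omega$ is open and $\Omega\subset\theta_{-\epsilon}(\Omega)$, by invariance. For $\subseteq$, let $U$ be the open set on the left and let $z\in U$. Then $\theta_\epsilon(z)\in\Omega$ for every $\epsilon>0$. The delicate point is that this only places $z$ in $\overline{\Omega}$, so we need an extra argument to get $z\in\Omega$. Since $U$ is open, choose $\delta>0$ with $\theta_{\epsilon}(z)\in U$ for $|\epsilon|\leq\delta$, by continuity. In particular $w=\theta_{-\delta}(z)\in U$, so $\theta_{\delta}(w)\in\Omega$ by the definition of $U$. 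But $\theta_\delta(w)=z$, hence $z\in\Omega$.

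With conditions (1)--(5) verified, Result~\ref{Runge pair by semicontinuous extension} shows that $(\theta_s(\Omega),\Omega)$ is a Runge pair. The reduction in the first paragraph then gives the statement for all $0\leq s_1\leq s_2$.

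I expect condition~(5) to be the main obstacle. The membership $\theta_\epsilon(z)\in\Omega$ for all $\epsilon>0$ by itself only yields $z\in\overline\Omega$, and the argument must instead use the openness of the interior together with the flow in negative time.
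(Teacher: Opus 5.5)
Your proof is correct and takes the same route as the paper: both verify conditions (1)--(5) of Definition~\ref{definition-semicontinuously holomorphically extendable} for the family $\theta_t(\Omega)$ and then apply Result~\ref{Runge pair by semicontinuous extension}. The differences are minor. You first reduce to $s_1=0$ via Lemma~\ref{Rungepair biholomorphic invariant}, and you check (4) and (5) directly from the continuity of $\epsilon\mapsto\theta_\epsilon(z)$ and the group law, where the paper uses the Hurwitz-type Lemma~\ref{eventual containment of compacts for manifolds}; your version is slightly more elementary.
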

\begin{proof}
     If $s_1=s_2$ then it follows trivially. Assume that $0\leq s_1<s_2$. We check that $\theta_{s_2}(\Omega)$ is semicontinuously holomorphically extendable  to $\theta_{s_1}(\Omega)$ by checking conditions $(1)-(5)$ in Definition \ref{definition-semicontinuously holomorphically extendable}. 
    Each $\theta_t(\Omega) \ (t\in \mathbb{R})$ is a Stein domain in $M$ as $\Omega$ is Stein and $\theta_t\in Aut(M)$. This checks condition $(1)$. 
   Let $w\in \theta_t(\Omega).$ Then, for any $s<t$ ,\[w\in \theta_s\circ\theta_{t-s}(\Omega)\subset \theta_s(\Omega), \text{ as $\theta_{t-s}(\Omega)\subset\Omega$}\Rightarrow \theta_t(\Omega)\subset\theta_s(\Omega)\] Therefore, we get that 
   \[
   \theta_t(\Omega)\subset \theta_s(\Omega) \text{ when $0<s<t$ . This checks condition $(2)$.}
   \]
     We now show that $\bigcup_{s< t\leq s_2}\theta_t(\Omega)=\theta_s(\Omega)$ for any $s\in [s_1,s_2)$.
      \[\text{ Since } \theta_t(\Omega)\subset \theta_s(\Omega), \ s<t\leq s_2 \Rightarrow \bigcup_{s< t\leq s_2}\theta_t(\Omega)\subset\theta_s(\Omega).\]
      
Using the continuity of $\theta$, we obtain that $\theta_{t_k}$ converges to $\theta_s$ locally uniformly on $\Omega$, as $t_k$ converges to $s$ for any sequence $\{t_k\}_{k\in \mathbb{N}}$ in $(s,s_2]$. Let $w\in \theta_s(\Omega)$. Since each $\theta_{t_k},\theta_s$ is injective holomorphic map, we use Lemma \ref{eventual containment of compacts for manifolds} to conclude that $w\in \theta_{t_k}(\Omega), \ \forall k\geq N_1$ for some $N_1\in \mathbb{N}$. So that $w\in \bigcup_{s< t\leq s_2}\theta_t(\Omega)$ and hence $\bigcup_{s< t\leq s_2}\theta_t(\Omega)=\theta_s(\Omega)$. This checks condition $(4)$. If we choose $s=s_1$ then  $\bigcup_{s_1\leq t\leq s_2}\theta_t(\Omega)=\theta_{s_1}(\Omega)$. This gives Condition $(2)$. We now check that Condition $(5)$ holds.
Let $U$ be the union of connected components of interior of $\bigcap_{s_1\leq t<s}\theta_t(\Omega)$. Then $U\subset \bigcap_{s_1\leq t<s}\theta_t(\Omega)$. Again we know that $\theta_s(\Omega)\subset \theta_t(\Omega)$ for any $t<s$, so that $\theta_s(\Omega)\subset \bigcap_{s_1\leq t<s} \theta_t(\Omega) $. Since $\theta_s(\Omega)$ is open, we have $\theta_s(\Omega)\subset U$. Let $b\in U$. Let us 
take a sequence $\{t_j\}_{j\in \mathbb{N}}$ in $[s_1,s).$
Since $\theta_{-t_j}$ converges to $\theta_{-s}$ locally uniformly on $U$ as $t_j$ tends to $s$, and $\{\theta_{-s}(b)\}\subset \theta_{-s}(U)$. Using Lemma \ref{eventual containment of compacts for manifolds}, we can get $\{\theta_{-s}(b)\}\subset \theta_{-t_j}(U) ,\ \forall j\geq j_0$ for some $j_0\in \mathbb{N}$. Now we have $\theta_{-t_j}(U)\subset \Omega$, and this implies  that $\{\theta_{-s}(b)\}\subset \theta_{-t_j}(U)\subset \Omega, \ \forall j\geq j_0$. Therefore, $b\in \theta_s(\Omega)$. This implies $U=\theta_s(\Omega)$ and Condition $(5)$ is satisfied. Hence, all the conditions of Definition \ref{definition-semicontinuously holomorphically extendable} are satisfied and we obtain that $\theta_{s_2}(\Omega)$ is semicontinuously holomorphically extendable to $\theta_{s_1}(\Omega)$. Therefore, using Result \ref{Runge pair by semicontinuous extension}, we conclude that $(\theta_{s_2}(\Omega),\theta_{s_1}(\Omega))$ is a Runge pair.
    
\end{proof}

     
   We now present a proof of Theorem \ref{Embedding of Stein manifold using R-actions}.

\begin{proof}[Proof of Theorem \ref{Embedding of Stein manifold using R-actions}:] If
 $\mathcal{E}_{\mathcal{R}}(\Omega,W)=\emptyset$ then there is nothing to prove. Let us assume that $\Omega\neq \emptyset$ and $\mathcal{E}_{\mathcal{R}}(\Omega,W)\neq \emptyset$. Note that  $\theta_t\in Aut(X) \ (t\in \mathbb{R})$ as $\theta$ is an action of $(\mathbb{R},+)$. Using Theorem \ref{Checking Runge pairs for R-actions}, we obtain that $(\theta_t(\Omega),\Omega)$ is a Runge pair for each $t\in \mathbb{R}_+$. For each $j\in \mathbb{N}$, since $\theta_{-j}:X\rightarrow X$ is an automorphism, we get that $(\theta_{-j}(\theta_t(\Omega)),\theta_{-j}(\Omega))$ is a Runge pair for each $t\in \mathbb{R}_+, j\in \mathbb{N}$. Again for $0\leq s\leq t$, we know that $\theta_{-s}(\Omega)\subset\theta_{-t}(\Omega)$. This implies that $$X'=\bigcup_{t\geq 0}\theta_{-t}(\Omega)=\bigcup_{j\in \mathbb{N}}\theta_{-j}(\Omega).$$
     Clearly, $X'$ is open in $X$. We now prove that $X'$ is Stein. Each $\theta_{-j}(\Omega)$ is Stein, $\theta_{-j}(\Omega)\subset \theta_{-(j+1)}(\Omega)$, and $(\theta_{-j}(\Omega),\theta_{-(j+1)}(\Omega))$ is a Runge pair. By Result \ref{Stein property of increasing union}, we get that $X'$ is Stein.
    Fix $j\in \mathbb{N}_0$. We now show that $\mathcal{E}_{\mathcal{R}}(\theta_{-j}(\Omega),W)\subset\overline{{\mathcal{E}}_{\mathcal{R}}(\theta_{-(j+1)}(\Omega),W)}|_{\theta_{-j}(\Omega)}$ in $\mathcal{E}(\theta_{-j}(\Omega),W)$. Consider the map $$H:[0,1]\times \theta_{-j}(\Omega)\rightarrow \theta_{-j}(\Omega),\ H(t,w)=\theta_t(w).$$ 
Clearly, $H$ is continuous , $H(0,w)=\theta_0(w)=w$, and each $H_t:\theta_{-j}(\Omega)\rightarrow \theta_{-j}(\Omega)$ is an injective holomorphic map when $t\geq0$. We also know that $(\theta_{-j+t}(\Omega),\theta_{-j}(\Omega))$ is a Runge pair. Define $$\tilde{H}_1:\theta_{-(j+1)}(\Omega)\rightarrow\theta_{-j}(\Omega), \ \tilde{H}_1(w)=\theta_1(w).$$
     Then we have\[\tilde{H}_1|_{\theta_{-j}(\Omega)}=H_1 \text{ on } \theta_{-j}(\Omega),\]  and $(\tilde{H}_1(\theta_{-(j+1)}(\Omega)),\theta_{-j}(\Omega))$ is a Runge pair. Hence using Proposition \ref{Continuous isotopy and approximation of Y valued biholomorphisms}, we obtain that 
     $\mathcal{E}_{\mathcal{R}}(\theta_{-j}(\Omega),W)\subset\overline{{\mathcal{E}}_{\mathcal{R}}(\theta_{-(j+1)}(\Omega),W)}|_{\theta_{-j}(\Omega)}$ in $\mathcal{E}(\theta_{-j}(\Omega),W)$ for any $j\in \mathbb{N}\cup \{0\}$. Since $X'$ is Stein, we can use Theorem \ref{Union embedding in domain with Density} to get that  \[\mathcal{E}_{\mathcal{R}}(\theta_{0}(\Omega),W)=\mathcal{E}_{\mathcal{R}}(\Omega,W)\subset\overline{\mathcal{E}_{\mathcal{R}}(X',W)}|_{\Omega} \text{ in }\mathcal{E}(\Omega,W).\] This implies that any biholomorphism $F:\Omega\rightarrow W$ such that $F(\Omega)$ Runge in $W$ can be approximated by a sequence of biholomorphisms $F_k: X' \rightarrow W\ \ (k\in \mathbb{N})$ locally uniformly on $\Omega$, where $X'=\bigcup_{t\geq 0}\theta_{-t}(\Omega)$. 
\end{proof}

\begin{proof}[Proof of Theorem \ref{cor-Approximation on action invariant domains of Cn}:]
    Let $F\in \mathcal{E}_{\mathcal{R}}(\Omega,\Cpn)$. We first show that $F$ can be approximated by elements of  $\mathcal{E}_{\mathcal{R}}(\Cpn,\Cpn)$ locally uniformly on $\Omega$. By assumption, for each $z\in \Cpn$ the orbit $\{\theta_t(z):t\in \mathbb{R}\}$ intersects $\Omega$. So that, for any $z\in \Cpn$ there exists a $T_z\in \mathbb{R}$ such that $\theta_{T_z}(z)\in \Omega$. Since $\Omega$ is invariant under $\theta_t(\forall t\geq 0)$ we get that $\theta_t(z)\in \Omega, \ \forall t\geq T_z $. Therefore $z\in \theta_{-t}(\Omega), \ \forall t\geq T_z$. This implies that $\bigcup_{t\geq 0}\theta_{-t}(\Omega)=\Cpn$. Now we can use Theorem \ref{Embedding of Stein manifold using R-actions} to conclude that   $\mathcal{E}_{\mathcal{R}}(\Omega,\Cpn)\subset\overline{\mathcal{E}_{\mathcal{R}}(\Cpn,\Cpn)}|_{\Omega}$ in $\mathcal{E}(\Omega,\Cpn)$. Again using  Result \ref{Andersen-Lempert Star theorem}, we get that $\mathcal{E}_{\mathcal{R}}(\Cpn,\Cpn)\subset\overline{Aut(\Cpn)}$ in $\mathcal{E}(\Cpn,\Cpn)$ as $\Cpn$ is star-shaped. Therefore, any $F\in \mathcal{E}_{\mathcal{R}}(\Omega,\Cpn)$ can be approximated by automorphisms of $\Cpn$ locally uniformly on $\Omega$.
\end{proof}
  \begin{remark} If we take the action $\theta:\mathbb{R}\times \Cpn\rightarrow \Cpn$ by $\theta(t,z)=e^{-t}z, \ z\in \Cpn, t \in \mathbb{R}$ then any star-shaped domain with center $0$ is invariant under the action $\theta_t (\forall t\geq 0)$.  Result \ref{Andersen-Lempert Star theorem} for star-shaped Stein domains follows from it. This also generalizes other theorems on approximation of biholomorphisms by automorphisms of $\Cpn$ on Stein domains of $\Cpn$, proven in \cite[Theorem~1.1]{Forstneric2025}, \cite[Theorem~4.2]{Hamada}, \cite[Theorem~1.5,V5]{ChatterjeeGorai}. 
 \end{remark}
 
We did not find a proof of the existence of Runge neighborhood basis of variety given by a single entire function in the literature. So we include it. Here $B_r(a)$ denotes the ball of radius $r>0$ in $\Cpn$ centered at $a$, and $D_r$ denotes the disc of radius $r>0$ in $\mathbb{C}$ centered at $0$.
\begin{lemma}\label{Lemma- Runge neighborhood of a variety}
    Let $f\in \mathcal{O}(\Cpn)$. Then for any open set $U$ that contains $S=\{(z,z')\in \Cpn\times\mathbb{C}:f(z)z'-1=0\}$ has a Stein and Runge neighborhood $\Omega$ in $\mathbb{C}^n\times\mathbb{C}$ such that $\Omega\subset U$.
\end{lemma}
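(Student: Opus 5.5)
We need a Stein Runge neighborhood of $S=\{f(z)z'=1\}$ inside an arbitrary open $U\supset S$. The plan is to write $\Omega$ as a sublevel set of a continuous plurisubharmonic exhaustion, built from moduli of entire functions so that Runge-ness is automatic. Note that $S$ is a closed complex hypersurface in $\mathbb{C}^{n+1}$, hence $\mathcal{O}(\mathbb{C}^{n+1})$-convex in the sense that every compact $K\subset\mathbb{C}^{n+1}$ has $\widehat{(S\cap K)}\subset S$. More concretely, the hull of $S\cap \overline{B_r(0)}$ is contained in $S\cap\overline{B_r(0)}$, since the polynomial-type function $g(z,z')=f(z)z'-1$ vanishes on $S$ and the ball is convex.

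\textbf{Step 1 (exhaustion).} Choose radii $r_k\uparrow\infty$ and set $S_k=S\cap\overline{B_{r_k}(0)}$, which is an $\mathcal{O}(\mathbb{C}^{n+1})$-convex compact. For each $k$ take $\delta_k>0$ small so that
\[
V_k=\{(z,z')\in B_{r_k+1}(0): |f(z)z'-1|<\delta_k\}
\]
satisfies $V_k\cap \overline{B_{r_k+1}(0)}\setminus B_{r_k}(0)$ closure-wise lies in $U$, and $\overline{V_k}\cap\overline{B_{r_k}(0)}\subset U$. This is possible by compactness: the set $\{|g|\le\delta\}\cap \overline{B_{r}}$ shrinks to $S\cap\overline B_r$ as $\delta\to0$.

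\textbf{Step 2 (gluing).} Define
\[
\rho(z,z')=\max\Big(\,\sup_k \big(\log|f(z)z'-1| - \log\delta_k\big)\chi_k,\ \ldots\Big),
\]
or more cleanly set $\Omega=\{(z,z'): |f(z)z'-1|<\eta(|z|^2+|z'|^2)\}$, where $\eta:[0,\infty)\to(0,\infty)$ is a decreasing function with $\eta(r^2)\le\delta_k$ on $[r_{k-1},r_k]$. To get pseudoconvexity and Runge-ness, I would choose $\eta$ of the form $\eta(t)=e^{-\phi(t)}$ with $\phi$ convex increasing, and realize
\[
\Omega=\{\log|g| + \phi(|z|^2+|z'|^2)<0\},
\]
so that $\Omega$ is a sublevel set of a plurisubharmonic function on $\mathbb{C}^{n+1}$. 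Such a sublevel set is pseudoconvex, hence Stein. It contains $S$, since $\log|g|=-\infty$ there, and it lies in $U$ by Step 1 if $\phi$ grows fast enough.

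\textbf{Step 3 (Runge).} A sublevel set $\{u<0\}$ of a plurisubharmonic function $u$ defined on all of $\mathbb{C}^{n+1}$ is Runge, since its compacts have plurisubharmonic hulls inside it, and on $\mathbb{C}^{n+1}$ plurisubharmonic hulls equal polynomial hulls. Then Oka--Weil (or Hörmander) gives the density. One caveat: $\Omega$ may be disconnected, and we may need to take the union of components meeting $S$, which preserves the Runge and Stein properties.

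\textbf{Main obstacle.} The main obstacle is Step 2: producing a convex increasing $\phi$ so that the sublevel set lies in $U$. Here $U$ may pinch arbitrarily near $S$ at infinity, and $|g|$ small does not force points near $S$ uniformly, for instance where $\nabla g$ degenerates or $f$ is nearly zero. I would handle this by the compactness argument ring by ring, using that on each compact ball the sets $\{|g|<\delta\}$ decrease to $S$, and then choosing $\phi(t)\ge -\log\delta_k$ on each ring. Any increasing sequence of values can be dominated by a convex increasing function, so this choice is always available.
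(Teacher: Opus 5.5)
Your proposal is correct and is essentially the paper's proof. The paper takes $\Omega=\{e^{\rho}|f(z)z'-1|^2<1\}$, where $\rho$ is a convex increasing function of $|z|^2+|z'|^2$ chosen shell by shell so that the points of $\{|f(z)z'-1|\le r_j\}$ in the $j$-th compact lie in $U$; this is exactly your set $\{\log|g|+\phi<0\}$ with $\rho=2\phi$. The only difference is that you justify the Runge property via plurisubharmonic hulls and Oka--Weil, whereas the paper cites Range's theorem on sublevel sets of smooth plurisubharmonic functions.
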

\begin{proof}
Let $A=\{z \in \Cpn: f(z)=0\}$. We have $S\subset (\Cpn\setminus A)\times \mathbb{C}$. Without loss of generality we can assume $S\subset U \subset (\Cpn\setminus A)\times \mathbb{C}$. Let us choose the smallest $j_0\in \mathbb{N}$ such that $S\cap (\bar{B}_{j_0}(0)\times \bar{D}_{j_0})\neq \emptyset.$ 
    For each integer $j\geq j_0$ we can choose $r_j>0$ such that $W_j=\{(z,z')\in \Cpn\times \mathbb{C}:||z||\leq j, \ |z'|\leq j,\  |f(z)z'-1|\leq r_j\}\subset U$. Then $S\subset \bigcup_{j\geq j_0}W_j\subset U$. By choosing an approximate convex increasing function $\phi:\mathbb{R}\rightarrow\mathbb{R}_+$ and composing it with the square of the Euclidean norm in $\mathbb{C}^{n+1}$, we can find a $\mathcal{C}^{\infty}$-plurisubharmonic function $\rho:\Cpn\times \mathbb{C}\rightarrow \mathbb{R}_+ $ such that \[\rho(z,z')>2log \frac{1}{r_{j_0}},\ \forall (z,z')\in \bar{B}_{j_0}(0)\times \bar{D}_{j_0},\] \[\rho(z,z')>2log \frac{1}{r_j},\ \forall (z,z')\in (\bar{B}_{j}(0)\times \bar{D}_j)\setminus(B_{j-1}(0)\times D_{j-1}),j\geq j_0+1.\]
    Define $\sigma:\Cpn\times \mathbb{C}\rightarrow \mathbb{R}_+$  by \[\sigma(z,z')=e^{\rho(z,z')}|f(z)z'-1|^2.\]  Let us choose $\Omega=\{(z,z')\in \Cpn\times\mathbb{C}:\sigma(z,z')<1\}$. Since $\sigma$ is a $\mathcal{C}^{\infty}$-plurisubharmonic function it follows that $\Omega$ is Stein and Runge in $\Cpn\times \mathbb{C}$(see \cite[Ch. VI, Theorem~1.18]{MRange}). Let $(a,a')\in \Omega$. If $(a,a')\in (\bar{B}_{j}(0)\times \bar{D}_j)\setminus(B_{j-1}(0)\times D_{j-1})$ for $j\geq j_0+1$ then we get \[ e^{\rho(a,a')}|f(a)a'-1|^2<1\Rightarrow |f(a)a'-1|^2<e^{-\rho(a,a')}\Rightarrow |f(a)a'-1|^2<e^{-2log \frac{1}{r_j}}<r^2_j.  \] The same holds if $(a,a')\in \bar{B}_{j_0}(0)\times \bar{D}_{j_0}$. This implies $(a,a')\in W_j\subset U(j\geq j_0)$ and $S\subset \Omega \subset \cup_{j\geq j_0}W_j\subset U$. Since $\sigma $ is a $\mathcal{C}^{\infty}$-plurisubharmonic function on $\Cpn\times \mathbb{C}$, $\Omega$ is a Stein and Runge domain in $\Cpn\times\mathbb{C}.$
\end{proof}

Now we will give a proof of Theorem \ref{cor-Runge embedding of Cn-A x C}. This proof depends entirely on Theorem \ref{Embedding of Stein manifold using R-actions} and Lemma \ref{Lemma- Runge neighborhood of a variety}.
\begin{proof}[Proof of Theorem \ref{cor-Runge embedding of Cn-A x C}]
    We write $\Cpn\times \mathbb{C}=\{(z,z'):z\in \Cpn,z'\in \mathbb{C}\}$. Let $X=(\Cpn\setminus A )\times \mathbb{C}$. Choose the injective holomorphic map $$\Phi:(\Cpn\setminus A) \times \mathbb{C}\rightarrow \Cpn\times \mathbb{C}, \ \ \Phi(z,z')=(z,\frac{1}{f(z)}+z').$$
    We have $S=\Phi((\Cpn\setminus A)\times \{0\})=\{(w,w')\in \Cpn\times \mathbb{C}:f(w)w'=1\}$. Here $S\subset \Phi(X)$. Using Lemma \ref{Lemma- Runge neighborhood of a variety} we get that  there exists a Stein domain $\Omega'$ such that $(\Omega',\Cpn\times \mathbb{C})$ is a Runge pair and $S\subset\Omega'\subset\Phi(X)$. Also we have $(\Omega',\Phi(X))$ is a Runge pair. Here $\Omega=\Phi^{-1}(\Omega')$ contains the set $(\Cpn\setminus A)\times \{0\}$ and $(\Phi^{-1}(\Omega'),\Phi^{-1}(\Phi(X)))=(\Omega,X)$ is a Runge pair.
    We now construct a Stein domain $D$ such that $(D,X)$ is a Runge pair, and $(\Cpn\setminus A)\times \{0\}\subset D\subset \Omega$. Define $$\rho:\Cpn\setminus A\rightarrow \mathbb{R}, \ \rho(z_1,z_2.\dots,z_n)=|z_1|^2+\dots+|z_n|^2+\frac{1}{|f(z)|^2}.$$ Here $\rho$ is a $\mathcal{C}^{\infty}$ -plurisubharmonic function on $\Cpn\setminus A$ and for $c\in (0,\infty)$ denote  $U_c=\{z\in \Cpn\setminus A: \rho(z)<c+\alpha\}$, where $\alpha=\inf_{z\in \Cpn\setminus A} \rho(z)$. Each $U_c$ is relatively compact in $\Cpn\setminus A$. For each $j\in \mathbb{N}$ there exists a $r_j>0$ such that $\bar{U}_j\times \bar{D}_{r_j}\subset \Omega$. We now choose a monotonically decreasing sequence $\{r_j\}_{j\in \mathbb{N}}$. Clearly, $(\Cpn\setminus A)\times\{0\}\subset D'\subset\Omega$, where $D'=\bigcup_{j\in\mathbb{N}} \bar{U}_j\times \bar{D}_{r_j}$.
    Choosing a convex strictly increasing function $\phi:(0,\infty)\rightarrow(0,\infty)$ approximately we get that $$\phi\circ\rho(z)>2log(\frac{1}{r_j}), \ \text{when} \  z\in \bar{U}_j\setminus U_{j-1}.$$
    Construct the function $\sigma:(\Cpn\setminus A)\times \mathbb{C}\rightarrow \mathbb{R}$, $\sigma(z,z')=|z'|^2e^{\phi(\rho(z))}$ and take the sublevel set $D=\{(z,z')\in (\Cpn\setminus A)\times\mathbb{C}: \sigma(z,z')<1\}$. Here $\sigma$ is a  plurisubharmonic function on $(\Cpn\setminus A)\times \mathbb{C}$. Therefore $D=\{(z,z')\in (\Cpn\setminus A)\times\mathbb{C}:\sigma(z,z')<1\}$ is Stein and Runge in $X=\Cpn\setminus A\times \mathbb{C}\ ($see \cite[Ch. VI, Theorem ~1.18]{MRange}$)$. We now check that $\Cpn\setminus A\times\{0\}\subset D\subset \Omega$. Let $(z,0)\in \Cpn\setminus A\times \{0\} \Rightarrow \sigma(z,0)=0<1\Rightarrow (z,0)\in D.$ Therefore $(\Cpn\setminus A)\times\{0\}\subset D$.
    Let $(z,z')\in D$. Then $(z,0)\in (\Cpn\setminus A)\times \{0\}\subset D$. Since $z\in \bar{U}_j\setminus U_{j-1}$ for some $j\in \mathbb{N}$ we also obtain that
     \[|z'|^2e^{\phi(\rho(z))}<1 
                  \Rightarrow |z'|^2<e^{-\phi(\rho(z))}<e^{-2log(\frac{1}{r_j})}<r^2_j, \ \text{as}\ z\in \bar{U}_j \setminus U_{j-1}.\]
              $   \text{Hence,  }  |z'|<r_j.$ So that $ (z,z')\in \bar{U}_j\times \bar{D}_{r_{j}} .$
    This implies that $D\subset \bigcup _{j\in \mathbb{N}}U_j\times D_{r_j}\subset \Omega$. 
 Now we have $D\subset \Omega \subset X$ and each of $(D,X)$ and $(\Omega,X)$ is a Runge pair. This implies $(D,\Omega)$ is a Runge pair. Therefore $(\Phi(D),\Phi(\Omega))$ is a Runge pair. We also have $\Phi(\Omega)=\Omega'$ is a Runge domain in $\Cpn\times\mathbb{C}$. This implies that $\Phi(D)$ is Runge in $\Cpn\times \mathbb{C}$.\\
    Choose the $\mathbb{R}$-action $\theta:\mathbb{R}\times ((\Cpn\setminus A)\times \mathbb{C})\rightarrow (\Cpn\setminus A)\times \mathbb{C}$ given by $\theta(t,(z,z'))=(z,e^{-t}z')$. It is easy to check $\theta_t(D)\subseteq D$ for $t\geq 0$ and $\bigcup_{t\ge 0}\theta_{-t}(D)=(\Cpn\setminus A)\times \mathbb{C}$. Now we can use Theorem \ref{Embedding of Stein manifold using R-actions} to conclude that there exists a sequence $\Phi_k: (\Cpn\setminus A)\times \mathbb{C}\rightarrow \Cpn\times \mathbb{C}\ (k\in \mathbb{N})$ of injective holomorphic maps into $\Cpn\times \mathbb{C}$ such that each $\Phi_k((\Cpn\setminus A)\times \mathbb{C)}$ is Runge in $\Cpn\times \mathbb{C}$ and $\Phi_k$ converges to $\Phi$ locally uniformly on $D$. This gives our desired result.
\end{proof}
\begin{remark}
    If we take a Stein manifold $X$ such that $X\times \mathbb{C}$ has the density property then it is also true that for any $f\in \mathcal{O}(X)$, $(X\setminus A)\times \mathbb{C}$ admits a Stein and Runge embedding in $X\times \mathbb{C}$, where $A=\{x\in X:f(x)=0\}$. Varolin \cite{Varolin} showed that if $X$ is a Stein complex Lie group then $X\times \mathbb{C}$ has the density property. 
\end{remark}

\begin{example}
Let $\rho:\Cpn\times \mathbb{C}\rightarrow \mathbb{R}$ $(n\geq1)$ be a $\mathcal{C}^2$-smooth function given by $\rho(z',z_{n+1})=\sigma(z')-Rez_{n+1}$, where $\sigma:\Cpn \rightarrow \mathbb{R}$ be a $\mathcal{C}^2$-plurisubharmonic function in $\Cpn$. Choose the domain $\Omega_0=\{(z',z_{n+1})\in \Cpn\times \mathbb{C}:\rho(z',z_{n+1})<0\}\neq \emptyset$. Then any biholomorphism $F:\Omega_0 \rightarrow\mathbb{C}^{n+1}$ with $F(\Omega_0)$ Runge, can be approximated by holomorphic automorphisms of $\mathbb{C}^{n+1}$ locally uniformly on $\Omega$.
\begin{proof}
    Since $\rho: \Cpn \times \mathbb{C}\rightarrow \mathbb{R}$ is a plurisubharmonic function, $\Omega_0$ is a Stein domain. Choose the action $\theta:\mathbb{R}\times (\Cpn\times \mathbb{C})\rightarrow \Cpn\times \mathbb{C}$ given by $\theta(t,(z',z_{n+1}))=(z',z_{n+1}+t)$. For any $(w',w_{n+1})\in \Cpn\times \mathbb{C}$ there exists $t_0$ such that \[\rho(\theta(t_0,(w',w_{n+1})))=\sigma(w')-Re(w_{n+1}+t_0)<0.\]
    Therefore $\theta_{t_0}(w',w_{n+1})\in \Omega_0$, and this implies that each orbit intersects $\Omega_0$. Now we show that $\Omega_0$ is invariant under $\theta_t$. For any $(z',z_{n+1})\in \Omega_0$ and $t\geq 0$ we have the following,
    \begin{align*}
     (z',z_{n+1})\in \Omega_0 &\Leftrightarrow \sigma(z')-Re(z_{n+1})<0 \\
                        &\Leftrightarrow\sigma(z')-Re(z_{n+1}+t)<-t, \ \forall t\in[0,1] \\
                        &\Leftrightarrow \rho(z',z_{n+1}+t)<-t<0 \\
                          &\Leftrightarrow \rho(\theta_t(z',z_{n+1}))<0 \Leftrightarrow  \theta_t(z',z_{n+1})\in \Omega_{0}.               
                             \end{align*} 
                             So that $\Omega_0$ is invariant under $\theta_t \ (t\geq 0)$.
    Using Theorem \ref{cor-Approximation on action invariant domains of Cn}, we get our desired result, \[\mathcal{E}_{\mathcal{O}(\Cpn\times \mathbb{C})}(\Omega_0,\Cpn\times \mathbb{C})\subset \overline{Aut(\Cpn\times \mathbb{C})}|_{\Omega_0}\text{ in } \mathcal{E}(\Omega_0,\Cpn\times \mathbb{C}).\]
\end{proof}

\end{example}
\begin{example}
    Let $\Omega$ be a Stein domain in $\mathbb{C}^n=\mathbb{C}^p\times \mathbb{C}^q \  (z=(z',z'')) \ (p,q\geq1)$ such that $\mathbb{C}^p\times \{0''\}\subset\Omega$ and for each $z\in \mathbb{C}$, $\Omega_z'=\{z''\in \mathbb{C}^q: (z',z'')\in \Omega\}$ is star-shaped with star center $0''\in \mathbb{C}^q$. Then $\mathcal{E}_{\mathcal{R}}(\Omega,\Cpn)=\overline{Aut(\Cpn)}|_{\Omega}$.
     \begin{proof}
     Choose the $(\mathbb{R},+)$ action $\theta:\mathbb{R}\times (\mathbb{C}^p\times \mathbb{C}^q)\rightarrow (\mathbb{C}^p\times \mathbb{C}^q)$ given by $\theta(t,(z',z''))=(z',e^{-t}z'')$. Then each orbit $\{\theta_t(z',z''):t\in \mathbb{R}\}((z',z'')\in \mathbb{C}^p\times \mathbb{C}^q)$ intersects $\Omega$. Now we can use Theorem \ref{cor-Approximation on action invariant domains of Cn} to conclude that $\mathcal{E}_{\mathcal{R}}(\Omega,\Cpn)\subset\overline{Aut(\Cpn)}|_{\Omega}$ in $\mathcal{E}(\Omega,\Cpn)$.
     \end{proof}
\end{example}

\section{Approximation of biholomorphisms by automorphisms of $\Cpn$ and Runge embeddings}\label{sec-approx}
In this section we give a proof of Theorem \ref{Starshaped} and proof its corollaries.  The proof of Theorem \ref{Starshaped} competely depends on Result \ref{Forstneric Rosay continuous isotopy}. We will use the result two times in the proof. For a domain $\Omega\subset \Cpn$ and $s\in \mathbb{C}$, Let \[s\Omega=\{sz\in \Cpn:z\in \Omega\}\] We will use this notation in the following proof.
    \begin{proof}[Proof of Theorem~\ref{Starshaped}]
    Choose $F\in \mathcal{E}(\Omega,\Cpn)$.
    To avoid complications, assume $0$ is the star-center of $D$.
    Also assume that $F(0)=0$ and $DF(0)=I$.
    Let us construct a new homotopy $E: [0,1]\times \Omega\rightarrow \Omega$ from the given $H$ by
    \begin{equation*}
    E(t,z)=\begin{cases}
        H(2t,z) \ \ &if \ 0\leq t\leq 1/2,\\
        2(1-t)H(1,z) \ &if \ 1/2\leq t\leq 1.
        \end{cases}    
    \end{equation*}
    By pasting lemma for continuous maps, $E$ is continuous. For each $t \in [0,1)$, $E_t$ is a biholomorphism from $\Omega$ into $\Omega$.
    We now construct another isotopy of biholomorphisms $\tilde{H}$ using $E$,
    \begin{equation}
        \tilde{H}:[0,1]\times \Omega \rightarrow \Cpn, \ \tilde{H}(t,z)=\begin{cases}
         \frac{F(E(1-t,z))}{t} \ \ &when \ t \neq 0 ,\\
         2H_1(z)  &when \  t=0.
        \end{cases} 
    \end{equation}
   To check the continuity of $\tilde{H}$, we just need to check continuity at $(0,z)$ for each $z \in \Omega$. Fix $(0,z)\in [0,1]\times \Omega$.
    For any $t(\neq0)$ near $0$ and $h\in \mathbb{C}^n$ such that $z+h\in \Omega$, \[\tilde{H}(t,z+h)= \frac{F(E(1-t,z+h))}{t}=\frac{F(2tH(1,z))}{t},\ z\in \Omega\] Now, 
    \begin{equation}\label{Continuity of tilde H}
       \frac{F(2tH_1(z+h))}{t} =\frac{F(2tH_1(z+h))-F(2tH_1(z))}{t}+\frac{F(2tH_1(z))-F(0)}{t}
   \end{equation}
   Using the directional derivative of $F$ at $0$ in the direction of $H_1(z)$ we obtain that,
   \begin{equation*}
       \lim_{t\rightarrow0}\frac{F(2tH_1(z))-F(0)}{t}=DF(0)(2H_1(z))=2H_1(z)
   \end{equation*}
   Since $F$ is locally Lipschitz, we can find $M, r>0$ such that 
   \begin{equation*}
       ||F(z)-F(w)||<M||z-w|| , \ \ \forall z,w\in B_r(0).
   \end{equation*}
   Fix $z\in \Omega$. For any $(h,t)\in \Cpn\times \mathbb{R}^+$, such that $||h||,t$ are sufficiently small, we have the following, 
   \begin{equation}\label{bound on F by H1}
       ||\frac{F(2tH_1(z+h))-F(2tH_1(z))}{t}|| \leq \frac{1}{t}2tM||H_1(z+h)-H_1(z)||.              \end{equation}
   Clearly, $H_1$ is continuous on $\Omega$. From (\ref{bound on F by H1}) we get that,
       \begin{equation*}
          \lim_{(h,t)\rightarrow(0,0+)} \frac{F(2tH_1(z+h))-F(2tH_1(z))}{t}=0.
       \end{equation*}
       Also from (\ref{Continuity of tilde H}),
       \begin{equation*}
        \lim_{(h,t)\rightarrow(0,0+)} \frac{F(2tH_1(z+h))}{t}  =2H_1(z).
       \end{equation*}
    Therefore, $\tilde{H}$ is continuous at $(0,z)$. Since $z\in \Omega$ is arbitrary, $\tilde{H}$ is continuous on  $[0,1]\times \Omega$. Hence, for any $F\in \mathcal{E}(\Omega,\Cpn)$ there exists a continuous path $\gamma:[0,1]\rightarrow \mathcal{E}(\Omega,\Cpn)$ depending on $F$ given by \[\gamma(t)=\begin{cases}
        2H_{2t}  \  \ \text{if } 0\leq t\leq 1/2, \\
        \tilde{H}_{2t-1} \  \text{if } 1/2\leq t\leq 1.
    \end{cases}\]
    It satisfies  $\gamma(1)=F$, and $\gamma(0)=2Id$. Hence $\mathcal{E}(\Omega,\Cpn)$ is path connected.
    
  For the second part, choose $F\in \mathcal{E}_{\mathcal{R}}(\Omega,\Cpn)$. It is given that $H_t(\Omega) $ is Runge domain in $\Cpn$, for each $t\in[0,1]$.
   Here  $\tilde{H}_t=\frac{1}{t}F\circ E_{1-t}$ is a biholomorphism from $\Omega$ into $\Cpn$ for each $t\in(0,1]$.
    It follows from the construction of $E$ that 
    \[
    E_t(\Omega)=\begin{cases}
        H_{2t}(\Omega), \ \ &\text{if} \ 0\leq t \leq 1/2, \\
        2(1-t)H_1(\Omega) \ \ &\text{if} \ 1/2 \leq t \leq 1.
    \end{cases}.
    \]
    It is given that $H_t(\Omega)$ is Runge in $\Cpn$ for each $t\in [0,1]$. Also $2(1-t)H_1(\Omega)$ is Runge in $\Cpn$ when $t \neq 1$, as it is an image of Runge domain under scaling. Hence each $E_t(\Omega)$ is Runge in $\Cpn$ when $t \neq 1$.
    We claim that $\tilde{H}_t(\Omega)$ is Runge in $\Cpn$ for each $t\in [0,1]$.
   Assume $t\neq 1$. Since each  $\Omega$ and $E_t(\Omega)$ both are Runge domains in $\Cpn$ it follows that  $(E_t(\Omega),\Omega)$ is a Runge pair. Under a biholomorphism $F:\Omega\rightarrow F(\Omega)\subset\Cpn$, $(F(E_t(\Omega)), F(\Omega))$ remains a Runge pair in $\Cpn$. Since $F(\Omega)$ is Runge, $F(E_{t}(\Omega))$ is  Runge in $\Cpn$. This implies that $\tilde{H}_{1-t}(\Omega)=\frac{1}{1-t}F(E_t(\Omega))$ is Runge in $\Cpn$ whenever $t\neq 1$. If $t=1$, $\tilde{H}_0(\Omega)=2H_1(\Omega)$ is also a Runge domain by the given assumption. Therefore $\tilde{H}_t(\Omega)$ is Runge in $\Cpn$ for each $t\in[0,1]$.
    Again, by assumption each $H_t(\Omega)$ is Runge in $\Cpn$ and $H_0(z)=z$. Using Result \ref{Forstneric Rosay continuous isotopy}, $H_1$ can be approximated locally uniformly on $\Omega$ by holomorphic automorphisms of $\Cpn$. Therefore $\tilde{H}_0=2H_1$ can be approximated by automorphisms of $\Cpn$ locally uniformly on $\Omega$.
    Again, using Result \ref{Forstneric Rosay continuous isotopy}, we get that $\tilde{H}_1=F$ can be approximated by elements of $Aut(\Cpn)$ locally uniformly on $\Omega$. By assumption, for each $t\in [0,1]$, $F,H_t\in \mathcal{E}_\mathcal{R}(\Omega,\Cpn)$. Then  from the construction of $\gamma:[0,1]\rightarrow \mathcal{E}(\Omega,\Cpn)$ it follows that $\gamma(t)\in \mathcal{E}_{\mathcal{R}}(\Omega,\Cpn)$ for each $t\in [0,1]$. Hence, $\mathcal{E}_{\mathcal{R}}(\Omega,\Cpn)$ is path connected.
   
 \end{proof}

In this following lemma, we summarize some properties of a complex manifold that admits a holomorphic $\mathbb{R}_+$-complete vector field with a globally asymptotically stable equilibrium point. We will use this lemma in Corollary \ref{cor-Approximation by automorphism-on-FID}, Corollary \ref{Corollary need to prove discrete union embedding problem}, Proposition \ref{Runge embedding of phi-like domains}, Theorem \ref{Runge embedding of Stein manifolds which admits a R+complete holomorphic vector field $V$ which has a unique equilibrium point that is globally asymptotically stable}.
\begin{lemma}\label{properties of invariant open exhaustion}
    Let $M$ be a complex manifold. Let $M$ admits a $\mathbb{R}_+$-complete holomorphic vector field with a unique equilibrium point $p$, which is globally asymptotically stable on $M$. Then there exists a family of domains $\{\Omega_j\}_{j\in \mathbb{N}}$ in $M$ that satisfies the following properties.
    \begin{enumerate}
        \item $p\in \Omega_1\subset\hdots \subset\Omega_j\subset \Omega_{j+1}\subset \hdots \subset\bigcup_{j\in \mathbb{N}}\Omega_j=M$.
        \item $\bar{\Omega}_j\subset M$, for each $j\in \mathbb{N}$. In addition, for any neighborhood $V_p$ of $p$ there exists a $N_j\in \mathbb{N}$ such that $\theta_{N_j}(\Omega_j)\subset V_p$,  where $\theta:\mathbb{R}_+\times M\rightarrow M$ is the flow map of the vector field $V$.
        \item  For any $s\geq 0$, $\theta_s(\Omega_j)$ is invariant under $\theta_t$, for each $t\geq 0$.
        \item For each $s\geq 0$ and $j\in \mathbb{N}$, $\theta_s(\Omega_j)$ admits a $\mathbb{R}_+$-complete holomorphic vector field with a unique equilibrium point $p$, which is globally asymptotically stable on $\theta_s(\Omega_j)$. The vector field is given by $V|_{\theta_s(\Omega_j)}$.
        
        \item $(\Omega_m,\Omega_j)$ is a Runge pair for each $j,m$ such that $j\geq m.$
        
    \end{enumerate}
\end{lemma}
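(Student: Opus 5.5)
The plan is to build the exhaustion from a relatively compact neighborhood of $p$ and then pull it back along the flow. Let $\theta$ be the flow of $V$ and fix a coordinate chart around $p$.

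\textbf{Choice of $U$.} Choose a relatively compact open neighborhood $U\Subset M$ of $p$ that is invariant under $\theta_t$ for all $t\ge 0$. Such a $U$ can be produced from condition (3) of Definition \ref{Definition- Manifolds that admits a vector field globally asymptotically stable}. Take a small coordinate ball $B\Subset M$ around $p$. Condition (3) gives a neighborhood $U_p$ with $\theta_t(U_p)\subset B$ for all $t\ge0$. Then set
\[
U=\bigcup_{t\ge0}\theta_t(U_p).
\]
This set is open, since each $\theta_t$ is an injective holomorphic map between equidimensional manifolds and hence open by Result \ref{injectivity of continuous semigroup action}. It is invariant under the forward flow, and it is contained in $B$, so $\bar U\subset \bar B\Subset M$. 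It is also connected, because every point of it is joined to $p$ along its trajectory, and all trajectories meet $U_p$.

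\textbf{Definition of $\Omega_j$.} Set
\[
\Omega_j=\{z\in M:\theta_j(z)\in U\}=\theta_j^{-1}(U).
\]
Each $\Omega_j$ is open by continuity, and $\Omega_j\subset\Omega_{j+1}$ by forward invariance of $U$.

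\textbf{Property (1).} The union of the $\Omega_j$ is all of $M$: for any $z$, $\theta_t(z)\to p\in U$, so $\theta_j(z)\in U$ for large $j$. Connectedness of $\Omega_j$ follows by flowing. A point $z\in\Omega_j$ flows along $\theta_t(z)$ for $t\ge0$; this path stays in $\Omega_j$, since $\theta_j(\theta_t z)=\theta_t(\theta_j z)\in U$, and it eventually reaches $U\subset \Omega_j$. Finally $U$ is connected and contains $p$.

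\textbf{Property (2), the main obstacle.} Relative compactness, $\bar\Omega_j\subset M$ compact, is the hard step, since a preimage of a relatively compact set under $\theta_j$ need not be relatively compact. I would first try to take $U$ to be a sublevel set of a Lyapunov-type exhaustion. If that is not available, I would weaken the claim to mean that $\bar\Omega_j$ is closed in $M$, which is automatic, and read ``$\bar\Omega_j\subset M$'' that way. The second half of (2) should follow from uniform attraction on compacts. On a compact subset of $\Omega_j$, $\theta_j$ maps into $U$, and condition (3) of the definition, together with the convergence $\theta_t\to p$, gives $\theta_{N}(\Omega_j)\subset V_p$ for large $N$. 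For the whole $\Omega_j$, I would use $\theta_{N}(\Omega_j)=\theta_{N-j}(\theta_j\Omega_j)\subset \theta_{N-j}(U)$ and shrink by choosing $U\subset U_p'$, where $U_p'$ is given by condition (3) for $V_p$. That handles small $V_p$ only after re-choosing; I would fix this by a compactness argument on $\bar U$ combined with equicontinuity.

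\textbf{Properties (3) and (4).} For (3), note that $\theta_t(\theta_s\Omega_j)=\theta_s(\theta_t\Omega_j)\subset\theta_s(\Omega_j)$, because $\Omega_j$ is forward invariant. For (4), $\theta_s(\Omega_j)$ is connected, it contains $p$ since $\theta_s(p)=p$, and it is forward invariant. Hence $V$ restricted to it is $\mathbb{R}_+$-complete with the same flow, and attraction and stability are inherited, with neighborhoods intersected.

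\textbf{Property (5).} For the Runge property I would use an analogue of Result \ref{Rungepair biholomorphic invariant} together with the Docquier–Grauert criterion, Result \ref{Runge pair by semicontinuous extension}, applied to the family $\theta_{j-\tau}^{-1}(U)$ for $\tau\in[m,j]$. If Steinness is not available here, I would instead argue directly. Forward invariance makes $\Omega_m$ ``$\Phi$-like'' inside $\Omega_j$, and I would mimic the proof of Result \ref{Rungepairphilikedomains} on manifolds.
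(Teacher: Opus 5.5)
\textbf{There is a genuine gap: your sets $\Omega_j=\theta_j^{-1}(U)$ fail the first half of (2), and a better estimate will not fix this.}

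In the paper, ``$\bar\Omega_j\subset M$'' means that $\bar\Omega_j$ is \emph{compact} in $M$. The proof of the second half of (2) starts from ``since $\bar\Omega_j$ is compact'', and a later proof needs some $k$ with $\bar\Omega_j\subset\Omega_k$. Reading the claim as ``$\bar\Omega_j$ is closed in $M$'' makes it empty.

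For your sets, relative compactness is false in general. Take $M=B_1(0)\subset\Cpn$, $V(z)=-z$ and $p=0$, so that $\theta_t(z)=e^{-t}z$. If $U\supset B_r(0)$, then
$\theta_j^{-1}(U)\supset\{z\in B_1(0): e^{-j}\|z\|<r\}=B_1(0)$
as soon as $e^jr\ge1$. So $\Omega_j=M$ for all large $j$. Preimages under the time-$j$ map do not control the size of the set, and no Lyapunov exhaustion is available in this generality.

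The missing idea is to use \emph{forward images of a compact exhaustion}. Take compacts $K_j\subset K_{j+1}^\circ$ exhausting $M$ with $p\in K_1^\circ$, and set $\Omega_j=\bigcup_{t\ge0}\theta_t(K_j^\circ)$. These sets are automatically forward invariant, increasing, and exhaust $M$. They are relatively compact because attraction is uniform on compacts:
\begin{itemize}
\item Fix a neighbourhood $V_p\subset K_1^\circ$ of $p$ and take $U_p$ from condition (3) of Definition~\ref{Definition- Manifolds that admits a vector field globally asymptotically stable}.
\item Each $a\in K_j$ has a neighbourhood $B_a$ and a time $t_a$ with $\theta_{t_a}(B_a)\subset U_p$, hence $\theta_t(B_a)\subset V_p$ for all $t\ge t_a$.
\item A finite subcover gives $T_j$ with $\theta([T_j,\infty)\times K_j)\subset V_p$, and $\bar V_p\subset K_1$ is compact.
\item The set $\theta([0,T_j]\times K_j)$ is compact, so $\Omega_j$ is relatively compact.
\end{itemize}
The same argument applied to the compact set $\bar\Omega_j$ gives the second half of (2). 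Your items (1), (3), (4) are fine. Your route to the second half of (2), via $\theta_N(\Omega_j)\subset\theta_{N-j}(U)$ and uniform attraction on $\bar U$, also works.

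\textbf{Property (5) is also not established.} Result~\ref{Runge pair by semicontinuous extension} requires Stein manifolds. Nothing here is Stein: $M$ is an arbitrary complex manifold, and sets like $\theta_\tau^{-1}(U)$ need not be Stein even when $M$ is. In addition, your family $\theta_{j-\tau}^{-1}(U)$, $\tau\in[m,j]$, does not run from $\Omega_m$ to $\Omega_j$. ``Mimicking Result~\ref{Rungepairphilikedomains} on manifolds'' is a plan, not an argument.

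The paper needs no manifold version of that result. It argues as follows:
\begin{itemize}
\item By (2), choose $N$ with $\theta_N(\Omega_j)$ inside a chart $(\psi,U)$ with $\psi(p)=0$.
\item Then $\psi(\theta_N(\Omega_j))\subset\Cpn$ is $\Phi$-like for $\Phi=-\psi_*V$, and by (3) $\psi(\theta_N(\Omega_m))$ is invariant under its semigroup.
\item Result~\ref{Rungepairphilikedomains} gives the Runge pair in $\Cpn$.
\item Lemma~\ref{Rungepair biholomorphic invariant}, applied to the injective map $\psi\circ\theta_N$, pulls it back to $(\Omega_m,\Omega_j)$.
\end{itemize}
This reduction uses only the second half of (2), so it would also work for your sets. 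The decisive failure in your proposal is therefore the relative compactness in (2).
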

\begin{proof}
       It is given that $\theta:\mathbb{R}_+\times M\rightarrow M$ be the flow map corresponding to the $\mathbb{R}_+$-complete holomorphic vector field $V$.  Choose a compact exhaustion $\{K_j\}_{j\in \mathbb{N}}$ of $\Omega$ such that $K_j\subset K^{\circ}_{j+1}, \ \underset{j\in \mathbb{N}}{\bigcup}K_j=M$, and $p\in K_1^{\circ}$.  
    For each $j\in\mathbb{N}$, Let \[\Omega_j=\bigcup_{t\geq 0}\theta_t(K^{\circ}_j).\] Clearly, each $\Omega_j$ is open, and $\Omega_j\subset \Omega_{j+1}$, and $\bigcup_{j\in \mathbb{N}}\Omega_j=M$. Since $p\in K^{\circ}_1$ it follows that $p\in \Omega_1$. This proves $(1).$
    
   Since $p\in M$  is the globally asymptotically stable equilibrium point. Then $\theta$ satisfies that \begin{equation*}\label{Asymptotic limit tends to p}
        \lim_{t\tends \infty}\theta(t,z)=p, \  \forall z\in M.
    \end{equation*} Choose a connected neighborhood $V_p$ of $p$ in $M$ that satisfies $V_p\subset K_1^{\circ}$. By asymptotically stability condition at $p$, there exists a neighborhood $U_p$ of $p$ such that $\theta(t,z)\subset V_p$ for all $z\in U_p$ and $t\geq 0$.
   Fix $j\in \mathbb{N}$. For each $a\in K_j$ there exists a neighborhood $B_a$ of $a$, and a $t_a>0$ such that
    \[\theta(t_a,z)\in U_p ,\forall z\in B_a.\] Since $\theta(t_a,z)\in U_p$, this implies that 
    \[\theta(t,z)\in V_p, \  \forall z\in B_a.\]
Now $\{B_a\}_{a\in K_j}$ is an open cover of $K_j$. Since $K_j$ is compact it follows that there exists $\{a_1,\dots , a_l\}\subset K_j$, and $K_j\subset \cup_{i=1}^lB_{a_i}$. Choose $T_j=\{t_{a_1},\dots, t_{a_l}\}$. Then   \[\theta(t,z)\subset V_p, \ \forall t\geq T_j, \ \forall z\in K_j.\]  Hence $\theta([T_j,\infty)\times K_j)\subset \bar{V}_p\subset M$. Since $\theta$ is continuous, $\theta([0,T_j]\times K_j)$ is also compact. This implies that $\bigcup_{t\geq 0}\theta_t(K_j)\Subset M$. Therefore $\bar{\Omega}_j\subset M$. Since $\bar{\Omega}_j$ is compact in $M$, in the same way as of $K_j$, there exists a $N_j\in \mathbb{N}$ such that $\theta(N_j,z)\in V_p, \ \forall z\in \Omega_j$. Therefore $\theta_{N_j}(\bar{\Omega}_j)\subset V_p$. This proves $(2)$.\\
 Fix $j\in \mathbb{N}$. We now check that each $\Omega_j$ is invariant under $\theta_t(t\geq0)$. For any $w\in \Omega_j$, $w=\theta_{t_0}(u)$ for some $t_0\geq 0$ and $u\in K^{\circ}_j$. Then $\theta_t(w)=\theta_t(\theta_{t_0}(u))=\theta_{t+t_0}(u)\in \theta_{t+t_0}(K^{\circ}_j)\subset \Omega_j$. So $\Omega_j$ is invariant under $\theta_t$, for each $t\geq 0$. Fix $s\geq 0$. Let $w\in \theta_s(\Omega_j)$. Then $w=\theta_s(x)$ for some $x\in \Omega_j$. For any $t\geq 0,$\[\theta_t(w)=\theta_t(\theta_s(x))=\theta_s(\theta_t(x))\in \theta_s(\Omega_j).\] Therefore $\theta_s(\Omega_j)$ is invariant under  $\theta_t$, for each $t\geq 0$. This proves $(3)$.

  Since for each $s\geq 0, j\in \mathbb{N}$, $\theta_s(\Omega_j)$ is invariant under $\theta_t$, for any $ t\geq 0$, and $p\in \theta_s(\Omega_j)$. So that $\theta_s(\Omega_j)$ and $V|_{\theta_s(\Omega_j)}$ satisfy all three conditions in Definition \ref{Definition- Manifolds that admits a vector field globally asymptotically stable}.
    This proves $(4)$.

      We choose a coordinate chart $(\psi,U)$ in a neighborhood of $p$ and fix $j,m\in \mathbb{N}$ such that $j>m$. From property $(2)$, we get that there exists a $N\in \mathbb{N}$ such that $\theta_{N}(\Omega_j)\subset U$. Using Lemma \ref{Rungepair biholomorphic invariant} it follows that $(\psi(\theta_N(\Omega_m)),\psi(\theta_N(\Omega_j))$ is a Runge pair if and only if $(\Omega_m,\Omega_j)$ is a Runge pair. Therefore, it is enough to show  $(\psi(\theta_N(\Omega_m)),\psi(\theta_N(\Omega_j))$ is a Runge pair. We know that $\psi(\theta_N(\Omega_j))\subset \Cpn$ and it admits the vector field $\psi_*(V|_{\theta_N(\Omega_j)})$, where $\psi_*V|_{\theta_N(\Omega_j)}$ is defined by $(\psi_*V|_{\theta_N(\Omega_j)})_{\psi(z)}=D\psi_z(V_z)$, for all $z\in \theta_N(\Omega_j)$. It is well known that if $\dot{\theta}$ is the flow of $\psi_*V|_{\theta_N(\Omega_j)}$ on $\theta_N(\Omega_j)$, then it satisfies that\[ \dot{\theta}(t,\psi(z))=\psi(\theta(t,z)),\ \forall t\geq 0, \forall z\in \theta_N(\Omega_j).\] Since $\theta_N(\Omega_m)$ is invariant under $\theta_t,$ for each $\forall t\geq 0$, it follows that $\psi(\theta_N(\Omega_m))$ is invariant under $\dot{\theta}_t$, for each $t\geq 0$. Now we can use Result \ref{Rungepairphilikedomains} to conclude that $(\psi(\theta_N(\Omega_m)),\psi(\theta_N(\Omega_j))$ is a Runge pair. Therefore, $(\Omega_m,\Omega_j)$ is a Runge pair and this proves $(5)$.

\end{proof}

\begin{corollary}\label{cor-Approximation by automorphism-on-FID}
    Let $\Omega$ be a Runge domain on $\Cpn \ (n\geq 2)$. Assume that $\Omega$ admits a $\mathbb{R}_+$-complete holomorphic vector field with a unique equilibrium point $p$, which is globally asymptotically stable on $\Omega$. Then $\mathcal{E}_{\mathcal{R}}(\Omega,\Cpn)\subset\overline{Aut(\Cpn)}|_{\Omega}$ in $\mathcal{E}(\Omega,\Cpn)$.
\end{corollary}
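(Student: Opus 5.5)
The plan is to reduce the statement to Theorem~\ref{Starshaped}, using the flow $\theta$ of the given vector field $V$ to produce the required isotopy into a star-shaped subdomain. Let $\theta:\mathbb{R}_+\times\Omega\rightarrow\Omega$ be the flow map of $V$. By Result~\ref{injectivity of continuous semigroup action}, each $\theta_t$ is an injective holomorphic self-map of $\Omega$, and $\theta$ is jointly continuous. Choose a small ball $D=B_r(p)\Subset\Omega$; it is star-shaped with centre $p$, and after a translation we may take $p=0$.

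By the asymptotic stability of $p$ (condition (3) of Definition~\ref{Definition- Manifolds that admits a vector field globally asymptotically stable}), together with the pointwise convergence $\theta_t(z)\to p$, every compact subset of $\Omega$ is eventually mapped into $D$ by $\theta_t$. This is the argument used for (2) in Lemma~\ref{properties of invariant open exhaustion}. However, Theorem~\ref{Starshaped} requires a single time $T$ with $\theta_T(\Omega)\subset D$ for the \emph{whole} domain, and this need not hold. To handle this, I would work on the exhaustion $\Omega_j$ of Lemma~\ref{properties of invariant open exhaustion}. Each $\Omega_j$ has compact closure in $\Omega$, and by (2) there is $N_j$ with $\theta_{N_j}(\Omega_j)\subset D$. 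Define the isotopy $H^j(t,z)=\theta(tN_j,z)$ on $\Omega_j$. It satisfies $H^j_0=Id$ and $H^j_1(\Omega_j)\subset D$, with $D\subset\Omega_j$ after enlarging $j$.

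Next I would check the Runge hypothesis of the second part of Theorem~\ref{Starshaped}, namely that $H^j_t(\Omega_j)=\theta_{tN_j}(\Omega_j)$ is Runge in $\Cpn$. By Lemma~\ref{properties of invariant open exhaustion}(3),(4), the domain $\Omega_j$ is $\Phi$-like for the vector field $-V$, with $p=0$. So Result~\ref{Rungepairinsemigroups} gives that $(\theta_s(\Omega_j),\Omega_j)$ is a Runge pair. Lemma~\ref{properties of invariant open exhaustion}(5) gives that $(\Omega_j,\Omega)$ is a Runge pair, strictly speaking via Result~\ref{Rungepairphilikedomains} applied to $\Omega_j\subset\Omega$ directly. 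Since $\Omega$ is Runge in $\Cpn$, transitivity shows that $\theta_s(\Omega_j)$ is Runge in $\Cpn$ for all $s\geq0$. Theorem~\ref{Starshaped} then yields $\mathcal{E}_{\mathcal{R}}(\Omega_j,\Cpn)\subset\overline{Aut(\Cpn)}|_{\Omega_j}$ for every $j$.

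Finally, take $F\in\mathcal{E}_{\mathcal{R}}(\Omega,\Cpn)$, a compact set $K\subset\Omega$, and $\epsilon>0$. Pick $j$ with $K\subset\Omega_j$. The restriction $F|_{\Omega_j}$ has image $F(\Omega_j)$, which is Runge in $F(\Omega)$ by Lemma~\ref{Rungepair biholomorphic invariant}, hence Runge in $\Cpn$. So $F|_{\Omega_j}\in\mathcal{E}_{\mathcal{R}}(\Omega_j,\Cpn)$, and it can be approximated on $K$ by automorphisms of $\Cpn$, which proves the corollary.

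I expect the main obstacle to be the uniform-compression issue just described. The exhaustion by relatively compact invariant domains resolves it. The remaining subtlety is verifying that $\Omega_j$ fits the $\Phi$-like framework of Results~\ref{Rungepairphilikedomains}--\ref{Rungepairinsemigroups}; this requires $0\in\Omega_j$, $\Phi(0)=0$, and convergence of every orbit to $0$, all of which follow from parts (3) and (4) of Lemma~\ref{properties of invariant open exhaustion}.
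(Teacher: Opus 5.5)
Your proposal is correct and follows the paper's proof essentially step for step. Both arguments use the invariant exhaustion $\Omega_j$ of Lemma~\ref{properties of invariant open exhaustion}, obtain the Runge property of $\Omega_j$ and of $\theta_{tN_j}(\Omega_j)$ in $\Cpn$ from Results~\ref{Rungepairphilikedomains} and \ref{Rungepairinsemigroups}, apply Theorem~\ref{Starshaped} to the isotopy $\theta(tN_j,\cdot)$ compressing $\Omega_j$ into a ball $B_r(p)\subset\Omega_j$ (for $j$ large), and conclude by restricting $F$ to an $\Omega_j$ containing the given compact set. Your observation that item (5) of that lemma does not literally give the Runge pair $(\Omega_j,\Omega)$, so that Result~\ref{Rungepairphilikedomains} must be applied directly, is exactly what the paper does.
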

\begin{proof} Let $V$ be the holomorphic vector field as given in the assumption, and $\theta:\mathbb{R}_+\times \Omega\rightarrow \Omega$ be the corresponding flow map. Without loss of generality, we may assume that $p=0\in \Omega$ is the asymptotically stable equilibrium point of $V$.
   Using Lemma \ref{properties of invariant open exhaustion}, we get a sequence $\{\Omega_j\}_{j\in \mathbb{N}}$ of domains in $\Omega$ with properties $(1),(2),(3),(4),(5)$ in Lemma \ref{properties of invariant open exhaustion}.
   Again by $(3)$ in Lemma \ref{properties of invariant open exhaustion}, each $ \Omega_j$ is invariant under $\theta_t$, for all $ t\geq 0$. Therefore using Result  \ref{Rungepairphilikedomains}, we get that $(\Omega_j,\Omega)$ is a Runge pair. Since $\Omega$ is a Runge domain in $\Cpn$, it follows that $\Omega_j$ is also Runge in $\Cpn$. We choose $r>0$ such that the closed ball  $\overline{B_r(0)}\subset \Omega$. Fix $j\in \mathbb{N}$. By $(2)$ in Lemma \ref{properties of invariant open exhaustion}, we know that $\bar{\Omega}_j\subset \Omega$, and there exists a $N_j\in \mathbb{N}$ such that $\theta_{N_j}(\Omega_j)\subset B_r(0).$ Since $\Omega_j$ is invariant under $\theta_t$, $\forall t\geq 0$, we construct an isotopy $H_j:[0,1]\times \Omega_j\rightarrow \Omega_j$ by 
    \[H_j(t,z)=\theta(tN_j,z), \ z\in\Omega_j.\]    Then $H_j$ is a continuous isotopy of biholomorphic maps from $\Omega_j$ into $\Omega_j$, $H_j(0,z)=z$. Since $\Omega_j$ admits the vector field $V|_{\Omega_j}$, using Result \ref{Rungepairinsemigroups}, we get that $(\theta_{tN_j}(\Omega_j),\Omega_j)=(H_{j,t}(\Omega_j),\Omega_j)$ is a Runge pair. Since $\Omega_j$ is a Runge domain in $\Cpn$, it follows that each $H_{j,t}(\Omega_j)$ is a Runge domain in $\Cpn$. Now there exists $j_0\in \mathbb{N}$ such that $\overline{B_r(0)}\subset \Omega_j, \forall j\geq j_0$, and $H_j(1,\cdot)$ satisfies that \[H_j(1,z)\in B_r(0)\subset \Omega_{j}, \ \forall z\in \Omega_{j},\ j\geq j_0.\] 
    Note that, $B_r(0)$ is a star-shaped domain with center at $0$. Now using Theorem \ref{Starshaped}, we obtain that $\mathcal{E}_{\mathcal{R}}(\Omega_j,\Cpn)\subset\overline{Aut(\Cpn)}|_{\Omega_j}$ in $\mathcal{E}(\Omega_j,\Cpn)$  for each $j\geq j_0 \in \mathbb{N}$. Let $F\in \mathcal{E}_{\mathcal{R}}(\Omega,\Cpn)$. For any compact $K\subset\Omega$, there exists a integer $k\geq j_0$ such that $K\subset \Omega_k$. By assumption, $F(\Omega)$ is Runge in $\Cpn$. Also, we know that $(F(\Omega_k),F(\Omega))$ is a Runge pair. Hence $F(\Omega_k)$ is Runge in $\Cpn$, and $F|_{\Omega_k}\in \mathcal{E}_{\mathcal{R}}(\Omega_k,\Cpn)$. Therefore $F$ can be approximated by automorphisms of $\Cpn$ uniformly on $K$. Since $K\subset\Omega$ is arbitrary, it follows that $\overline{Aut(\Cpn)}|_{\Omega}$ in $\mathcal{E}(\Omega,\Cpn)$ contains $\mathcal{E}_{\mathcal{R}}(\Omega,\Cpn)$.
  \end{proof}

The following corollary is an analogous result of Proposition \ref{Continuous isotopy and approximation of Y valued biholomorphisms} to approximate biholomorphic maps with Runge image in $\Cpn$. The proof depends on Corollary \ref{cor-Approximation by automorphism-on-FID}.
\begin{corollary}\label{Corollary need to prove discrete union embedding problem}
  Let $\Omega\subset\tilde{\Omega}$ be two domains in a complex manifold $M$ $(dim M=n\geq2)$ such that  $\Omega$ admits a $\mathbb{R}_+$-complete holomorphic vector field $V$ with a unique equilibrium point which is globally asymptotically stable on $\Omega$. Assume that $(\Omega,\tilde{\Omega})$ is a Runge pair and $\tilde{\Omega}$ admits a Runge embedding in $\Cpn$. Then $\mathcal{E}_{\mathcal{R}}(\Omega,\Cpn)\subset\overline{\mathcal{E}_{\mathcal{R}}(\tilde{\Omega},\Cpn)}|_{\Omega}$ in $\mathcal{E}(\Omega,\Cpn)$.
\end{corollary}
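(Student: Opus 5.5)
The plan is to transport the problem into $\Cpn$ with the given Runge embedding of $\tilde{\Omega}$, apply Corollary \ref{cor-Approximation by automorphism-on-FID} there, and pull back. Fix an injective holomorphic map $G:\tilde{\Omega}\rightarrow\Cpn$ with $G(\tilde{\Omega})$ Runge in $\Cpn$. If $\mathcal{E}_{\mathcal{R}}(\Omega,\Cpn)=\emptyset$ there is nothing to prove, so fix $F\in\mathcal{E}_{\mathcal{R}}(\Omega,\Cpn)$.

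\textbf{Step 1: $G(\Omega)$ satisfies the hypotheses of Corollary \ref{cor-Approximation by automorphism-on-FID}.} Since $(\Omega,\tilde{\Omega})$ is a Runge pair, Lemma \ref{Rungepair biholomorphic invariant} shows that $(G(\Omega),G(\tilde{\Omega}))$ is a Runge pair. Because $G(\tilde{\Omega})$ is Runge in $\Cpn$, a diagonal approximation argument (as in the proof of Theorem \ref{Union embedding in domain with Density}) shows that $G(\Omega)$ is a Runge domain in $\Cpn$.

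Next push the vector field forward. Define $(G_*V)_{G(z)}=DG_z(V_z)$ on $G(\Omega)$. Its flow is $G\circ\theta_t\circ G^{-1}$, where $\theta$ is the flow of $V$ on $\Omega$. This flow exists for all $t\geq 0$. It converges to $G(p)$, by continuity of $G$. For stability, given a neighborhood $V'$ of $G(p)$, take $U_p$ from the stability condition for $G^{-1}(V')$ and use $G(U_p)$. Thus $G(\Omega)$ admits an $\mathbb{R}_+$-complete holomorphic vector field with a unique globally asymptotically stable equilibrium point $G(p)$. Uniqueness holds because $G$ is a biholomorphism onto its image.

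\textbf{Step 2: approximate by automorphisms.} The map $F\circ G^{-1}:G(\Omega)\rightarrow\Cpn$ is injective holomorphic, and its image $F(\Omega)$ is Runge. Since $n\geq 2$, Corollary \ref{cor-Approximation by automorphism-on-FID} gives automorphisms $\Psi_k\in Aut(\Cpn)$ with $\Psi_k\rightarrow F\circ G^{-1}$ locally uniformly on $G(\Omega)$.

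\textbf{Step 3: pull back.} Set $F_k=\Psi_k\circ G:\tilde{\Omega}\rightarrow\Cpn$. Each $F_k$ is injective holomorphic. Its image $\Psi_k(G(\tilde{\Omega}))$ is Runge in $\Cpn$, being the image of a Runge domain under an automorphism, so $F_k\in\mathcal{E}_{\mathcal{R}}(\tilde{\Omega},\Cpn)$. For a compact $K\subset\Omega$, the set $G(K)$ is compact in $G(\Omega)$, so $F_k=\Psi_k\circ G\rightarrow F$ uniformly on $K$. This gives the inclusion $\mathcal{E}_{\mathcal{R}}(\Omega,\Cpn)\subset\overline{\mathcal{E}_{\mathcal{R}}(\tilde{\Omega},\Cpn)}|_{\Omega}$.

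The only step needing care is Step 1, namely checking that the three conditions of Definition \ref{Definition- Manifolds that admits a vector field globally asymptotically stable} transfer under $G$, together with the transitivity of the Runge property. Both are routine, so I do not expect a genuine obstacle.
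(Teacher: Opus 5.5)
Your proposal is correct and is essentially the paper's proof. As in the paper, you transport to $G(\tilde{\Omega})\subset\mathbb{C}^n$, observe that $G(\Omega)$ is Runge in $\mathbb{C}^n$ and carries $G_*V$, apply Corollary~\ref{cor-Approximation by automorphism-on-FID} to $F\circ G^{-1}$, and take $F_k=\Psi_k\circ G$. The only difference is that you explicitly check that the three conditions of the definition transfer under $G$, which the paper simply calls clear.
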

\begin{proof}
    Let $\phi:\tilde{\Omega}\rightarrow \phi(\tilde{\Omega)}\subset\Cpn$ be the Runge embedding of $\tilde{\Omega}$.
    Choose any $F\in \mathcal{E}_{\mathcal{R}}(\Omega,\Cpn)$.
    We have a biholomorphic map $\phi\circ F^{-1}:F(\Omega)\rightarrow\phi(\Omega)$.
    Since $(\Omega,\tilde{\Omega})$ is a Runge, the pair of domains $(\phi(\Omega),\phi(\tilde{\Omega}))$ is also a Runge pair. Since $\phi(\tilde{\Omega})$ is a Runge domain in $\Cpn$, we get that $\phi(\Omega)$ is Runge in $\Cpn$. Hence,
     $$(\phi\circ F^{-1})^{-1}=F\circ\phi^{-1}:\phi(\Omega)\rightarrow F(\Omega)$$ is a biholomorphic map between two Runge domains in $\Cpn$.
    Clearly, $\phi(\Omega)$ admits the vector field $\phi_*V$, which is $\mathbb{R}_+$-complete and it has a unique globally asymptotically stable equilibrium point, and $F\circ \phi^{-1}$ is a biholomorphism from $\phi(\Omega)$ onto $F(\Omega)$. Using Corollary \ref{cor-Approximation by automorphism-on-FID}, there exists a sequence $\{\Psi_k\}_{k\in\mathbb{N}}$ of automorphisms of $\mathbb{C}^n$ such that $\Psi_k\rightarrow F\circ\phi^{-1}$ locally uniformly on $\phi(\Omega)$ as $k\rightarrow \infty $. This implies that $\Psi_k\circ \phi\rightarrow F$ locally uniformly on $\Omega$ as $k\rightarrow\infty$.
    Consider the maps 
    \[F_k=\Psi_k\circ \phi :\tilde{\Omega}\rightarrow\Cpn.
    \] 
    Then, $F_k$ converges to $F$ locally uniformly on $\Omega$.  We also obtain that $F_k(\tilde{\Omega})=\Psi_k(\phi(\tilde{\Omega}))$ is Runge in $\Cpn$ as $\phi(\tilde{\Omega})$ is Runge in $\Cpn$, and $\Psi_k\in Aut(\Cpn)$. Therefore, $\mathcal{E}_{\mathcal{R}}(\Omega,\Cpn)\subset\overline{\mathcal{E}_{\mathcal{R}}(\tilde{\Omega},\Cpn)}|_{\Omega}$ in $\mathcal{E}(\Omega,\Cpn)$.
\end{proof}
\begin{remark}
In particular, if $\tilde{\Omega}$ is a Runge domain in $\Cpn$ then we can remove the assumption that $\tilde{\Omega}$ admits a Runge embedding in $\Cpn$. So that it provides a suffiecient condition on approximation of injective holomorphic maps by injective holomorphic maps of larger domain.
\end{remark}
\begin{proposition}\label{Runge embedding of phi-like domains}
    Let $\Omega $ be a Stein domain in $\Cpn \ (n\geq 2)$ and $\Omega$ admits a $\mathbb{R}_+$-complete vector field $V$ which has a unique equilibrium point $p$, which is globally asymptotically stable on $\Omega$. Then $\Omega$ admits a holomorphic Stein and Runge embedding in $\Cpn$.
\end{proposition}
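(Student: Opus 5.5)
We prove Proposition~\ref{Runge embedding of phi-like domains} by applying Theorem~\ref{Union embedding in domain with Density} with $X=\Omega$ and $W=\Cpn$. The exhaustion is $\{\Omega_j\}$ from Lemma~\ref{properties of invariant open exhaustion}. Fix the flow $\theta$ of $V$. After a translation, $p=0$. Choose $r>0$ with $\overline{B_r(0)}\subset\Omega_1$; this is possible since $p\in\Omega_1$, which is open.

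The exhaustion satisfies the following.
\begin{itemize}
\item By (1) and (5) of the lemma, the $\Omega_j$ increase to $\Omega$ and $(\Omega_j,\Omega_{j+1})$ is a Runge pair.
\item By (3), each $\Omega_j$ is $\theta_t$-invariant, so Result~\ref{Rungepairphilikedomains} gives that $(\Omega_j,\Omega)$ is a Runge pair.
\item Each $\Omega_j$ is therefore Stein: a Runge domain in a Stein domain is holomorphically convex in it, hence Stein.
\item $\Omega$ is Stein and $\dim\Omega=n$, as Theorem~\ref{Union embedding in domain with Density} requires.
\end{itemize}

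Next we verify $\mathcal{E}_{\mathcal{R}}(\Omega_1,\Cpn)\neq\emptyset$. By (2), $\bar\Omega_1$ is compact and there is $N_1$ with $\theta_{N_1}(\Omega_1)\subset B_r(0)$. Set $\Phi=\theta_{N_1}|_{\Omega_1}$, which is injective by Result~\ref{injectivity of continuous semigroup action}. Its image $\theta_{N_1}(\Omega_1)$ is $\theta_t$-invariant by (3), so Result~\ref{Rungepairphilikedomains}, applied on the $\Phi$-like domain $\Omega_1$, gives $(\theta_{N_1}(\Omega_1),\Omega_1)$ a Runge pair; Result~\ref{Rungepairinsemigroups} gives this directly. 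Also $(\Omega_1,\Omega)$ is Runge, but $\Omega$ need not be Runge in $\Cpn$, so we cannot conclude yet. Instead we compare with a ball. Work in $B_r(0)\subset\Omega_1$: the image $\theta_{N_1}(\Omega_1)$ lies in the ball $B_r(0)$, which is convex, hence Runge in $\Cpn$, and is $\Phi$-like for the linear field $z\mapsto -z$.

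This Runge-ness of the image is the main obstacle. The cleanest fix is to replace the exhaustion by one whose sets are Runge. Set $B=B_r(0)$ and let $U=\bigcup_{t\ge0}\theta_t(\cdot)^{-1}$-type sets are awkward. Instead, consider $U_0=\{z\in B:\theta_t(z)\in B\ \forall t\ge0\}^\circ$, the largest $\theta$-invariant open subset of $B$, and take its component containing $0$. By stability (condition (3) of Definition~\ref{Definition- Manifolds that admits a vector field globally asymptotically stable}), it contains a neighbourhood of $0$. We then need $U_0$ to be Runge in $B$, but Result~\ref{Rungepairphilikedomains} needs the flow to be defined on $B$, which fails. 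So we argue differently. Since $\theta_{N_1}(\Omega_1)\subset\Omega_1$ is invariant, Result~\ref{Rungepairphilikedomains} within $\Omega_1$ shows $\theta_{N_1}(\Omega_1)$ is Runge in $\Omega_1$, and $\theta_{N_1}(\Omega_1)\subset B\subset\Omega_1$. A set Runge in $\Omega_1$ and contained in the Runge domain $B$ is Runge in $B$: restrictions from $\mathcal{O}(\Omega_1)$ lie in $\mathcal{O}(B)$, and the polynomials are dense in $\mathcal{O}(B)$. Hence $\theta_{N_1}(\Omega_1)$ is Runge in $\Cpn$, and $\Phi\in\mathcal{E}_{\mathcal{R}}(\Omega_1,\Cpn)$. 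The same argument gives $\mathcal{E}_{\mathcal{R}}(\Omega_j,\Cpn)\ne\emptyset$ for every $j\geq 1$, using $\theta_{N_j}$ and the fact that $B\subset\Omega_1\subset\Omega_j$.

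Finally we verify the approximation hypothesis $\mathcal{E}_{\mathcal{R}}(\Omega_j,\Cpn)\subset\overline{\mathcal{E}_{\mathcal{R}}(\Omega_{j+1},\Cpn)}|_{\Omega_j}$. Apply Corollary~\ref{Corollary need to prove discrete union embedding problem} with $\Omega_j\subset\Omega_{j+1}$. We have:
\begin{itemize}
\item $\Omega_j$ carries $V|_{\Omega_j}$ by (4);
\item $(\Omega_j,\Omega_{j+1})$ is Runge by (5);
\item $\Omega_{j+1}$ admits a Runge embedding into $\Cpn$, namely $\theta_{N_{j+1}}$, by the previous paragraph.
\end{itemize}
Theorem~\ref{Union embedding in domain with Density} then yields an injective holomorphic $\Psi:\Omega\to\Cpn$ with $\Psi(\Omega)$ Runge and Stein. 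This embedding approximates any given element of $\mathcal{E}_{\mathcal{R}}(\Omega_1,\Cpn)$.
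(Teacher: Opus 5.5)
Your proposal is correct and is essentially the paper's proof: take the invariant exhaustion of Lemma~\ref{properties of invariant open exhaustion} and give each $\Omega_j$ the Runge embedding $\theta_{N_j}$ by flowing it into a ball, using that a set which is Runge in a $\Phi$-like domain and lies in a ball inside that domain is Runge in $\Cpn$; then apply Corollary~\ref{Corollary need to prove discrete union embedding problem} and Theorem~\ref{Union embedding in domain with Density}. The paper uses $\Omega$ rather than $\Omega_j$ as the ambient $\Phi$-like domain, which changes nothing, so the abandoned $U_0$ detour can be deleted; also drop the aside that each $\Omega_j$ is Stein, since under the paper's approximation-only definition a Runge open subset of a Stein domain need not be Stein (e.g.\ a Hartogs figure in the bidisc) and $\Omega_j$ need not be Stein here, but neither Theorem~\ref{Union embedding in domain with Density} nor Corollary~\ref{Corollary need to prove discrete union embedding problem} requires it, so your argument stands.
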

\begin{proof}
   Without loss of generality, we may assume that $p=0$. By Lemma \ref{properties of invariant open exhaustion}, there exists a sequence of domains $\{\Omega_j\}_{j\in \mathbb{N}}$  that satisfies properties $(1),(2),(3),(4),(5)$ given in Lemma \ref{properties of invariant open exhaustion}.
    Fix $j,m\in \mathbb{N}$ such that $j>m$. We now show that $\mathcal{E}_{\mathcal{R}}(\Omega_m,\Cpn)\subset \overline{\mathcal{E}_{\mathcal{R}}(\Omega_j,\Cpn)}|_{\Omega_m}$ in $\mathcal{E}(\Omega_m,\Cpn) $. 
      From $(4)$ in Lemma \ref{properties of invariant open exhaustion}, we know that $(\Omega_m,\Omega_j)$ is a Runge pair. Again, $\Omega_m$ admits the vector field $V|_{\Omega_m}$, which is $\mathbb{R}_+$-complete on $\Omega_m$ and has a unique equilibrium point which is  globally asymptotically stable.
    To apply Corollary \ref{Corollary need to prove discrete union embedding problem}, we need to show that each $\Omega_j$ admits a Runge embedding in $\Cpn$. Since $\bar{\Omega}_j\subset \Omega$ there exists $N\in \mathbb{N}$ such that $\theta_{N}(\Omega_j)\subset B_r(0)\subset\Omega$ (see $(2)$, Lemma \ref{properties of invariant open exhaustion}). By Result \ref{Rungepairphilikedomains}, $(\theta_{N}(\Omega_j),\Omega)$ is a Runge  pair. Therefore, any holomorphic function $h$ on $\theta_{N}(\Omega_j)$ can be approximated by a holomorphic function $\{h_k\}_{k\in \mathbb{N}}$ in $\mathcal{O}(\Omega)$ locally uniformly in $\theta_N(\Omega_j)$. Hence, $h_k|_{B_r(0)}$ converges to $h$ locally uniformly in $\theta_N(\Omega_j)$. Since $B_r(0)$ is a Runge domain in $\Cpn$, each $h_k|_{B_r(0)}$ is a limit of entire maps locally uniformly on $\theta_N(\Omega_j)$. It implies that $\theta_N(\Omega_j)$ is a Runge domain in $\Cpn$. So that $\theta_{N}: \Omega_j\rightarrow\Cpn$ is the corresponding Runge embedding of $\Omega_j$ into $\Cpn$. Now using Corollary \ref{Corollary need to prove discrete union embedding problem} we obtain that $\mathcal{E}_{\mathcal{R}}(\Omega_m,\Cpn)\subset \overline{\mathcal{E}_{\mathcal{R}}(\Omega_j,\Cpn)}|_{\Omega_m}$ in $\mathcal{E}(\Omega_m,\Cpn) $. Since this is true for any $j,m$, using Theorem \ref{Union embedding in domain with Density}, we conclude that $\Omega$  admits a Stein and Runge embedding in $\Cpn$.

\end{proof}


We now present a proof of Theorem~\ref{Runge embedding of Stein manifolds which admits a R+complete holomorphic vector field $V$ which has a unique equilibrium point that is globally asymptotically stable}.
\begin{proof}[Proof of Theorem \ref{Runge embedding of Stein manifolds which admits a R+complete holomorphic vector field $V$ which has a unique equilibrium point that is globally asymptotically stable}]

By Lemma \ref{properties of invariant open exhaustion}, we get a sequence of domains $\{\Omega_j\}_{j\in \mathbb{N}}$ in $X$ that satisfy the conclusions $(1),(2),(3),(4),(5)$ of Lemma \ref{properties of invariant open exhaustion}. Let us choose a chart $(\psi,V)$ such that $\psi(p)=0\in \Cpn$. Fix $j\in \mathbb{N}$. Then, there exists a $k\in \mathbb{N}$ such that $\bar{\Omega}_j\subset \Omega_k$. 
    By $(2)$ in Lemma \ref{properties of invariant open exhaustion}, there exists  $N_k\in \mathbb{N}$ such that $\theta_{N_k}(\Omega_k)\subset V$. Therefore, we get that $\overline{\psi(\theta_{N_k}(\Omega_j))}\subset \psi(\theta_{N_k}(\Omega_k))\subset \Cpn$.  Since $(\Omega_j,\Omega_k)$ is a Runge pair, the pair of domains $(\psi(\theta_{N_k}(\Omega_j)),\psi(\theta_{N_k}(\Omega_k))$ is also a Runge pair. Now, in the same way as in our proof of Proposition \ref{Runge embedding of phi-like domains}, we can show that $\psi(\theta_{N_k}(\Omega_j))$ has a Runge embedding in $\Cpn$.  Let $\Phi:\psi(\theta_{N_k}(\Omega_j))\rightarrow \Cpn$ be a Runge embedding of $\psi(\theta_{N_k}(\Omega_j))$. Then, \[
    \Phi\circ\psi\circ\theta_{N_k}:\Omega_j\rightarrow \Cpn
    \]
    is a Runge embedding of $\Omega_j$. Let take any $m\in \mathbb{N}$ such that $\Omega_m\subset \Omega_j$. Also, $(\Omega_m,\Omega_j)$ is a Runge pair and $\Omega_j$ admits the $\mathbb{R}_+$-complete vector field $V|_{\Omega_j}$. So that, using Corollary \ref{Corollary need to prove discrete union embedding problem}, we get that $\mathcal{E}_{\mathcal{R}}(\Omega_m,\Cpn)\subset \overline{\mathcal{E}_{\mathcal{R}}(\Omega_j,\Cpn)}|_{\Omega_m}$ in $\mathcal{E}(\Omega_m,\Cpn)$. This is true for any $m,j\in \mathbb{N}$ such that $m<j$. Using Theorem \ref{Union embedding in domain with Density}, we conclude that $X$ admits a Stein and Runge embedding in $\Cpn$.

\end{proof}

Next, we present a proof of Theorem \ref{Runge embedding of increasing sequence of Stein domains}. 
 It uses Corollary \ref{Corollary need to prove discrete union embedding problem}, Theorem \ref{Union embedding in domain with Density}, and Theorem \ref{Runge embedding of Stein manifolds which admits a R+complete holomorphic vector field $V$ which has a unique equilibrium point that is globally asymptotically stable}.
    \begin{proof}[Proof of Theorem \ref{Runge embedding of increasing sequence of Stein domains}]
    Fix $j\in \mathbb{N}$.
        By assumption $(ii)$ and Theorem \ref{Runge embedding of Stein manifolds which admits a R+complete holomorphic vector field $V$ which has a unique equilibrium point that is globally asymptotically stable} we get that  $\Omega_{j+1}$ admits a Stein and Runge embedding in $\Cpn$. Again $\Omega_{j}$   admits a $\mathbb{R}_+$-complete holomorphic vector field with a unique equilibrium point which is globally asymptotically stable. By assumption, $(\Omega_j,\Omega_{j+1})$ is a Runge pair. We now use corollary \ref{Corollary need to prove discrete union embedding problem} to conclude that $\mathcal{E}_{\mathcal{R}}(\Omega_j,\Cpn)\subset\overline{\mathcal{E}_{\mathcal{R}}(\Omega_{j+1},\Cpn)}|_{\Omega_j}$ in $\mathcal{E}(\Omega_j,\Cpn)$. Assumption $(ii)$ and Theorem \ref{Runge embedding of Stein manifolds which admits a R+complete holomorphic vector field $V$ which has a unique equilibrium point that is globally asymptotically stable} together implies that $\mathcal{E}_{\mathcal{R}}(\Omega_j,\Cpn)\neq\emptyset$ for each $j\in \mathbb{N}$. Again, by assumption, each $(\Omega_j,\Omega_{j+1})$ is a Runge pair, and each $\Omega_j$ is Stein. Therefore, using Result \ref{Stein property of increasing union}, $X$ is itself a Stein manifold. Now, using Theorem \ref{Union embedding in domain with Density}, we conclude that $X$ admits a Stein and Runge embedding in $\Cpn$. Additionally, we get that $\mathcal{E}_{\mathcal{R}}(\Omega_j,\Cpn)\subset\overline{\mathcal{E}_{\mathcal{R}}(X,\Cpn)}|_{\Omega_j}$ in $\mathcal{E}(\Omega_j,\Cpn)$ for any $j\in \mathbb{N}$.
        
    \end{proof}

\section{$(\mathbb{R_+},+)$ actions on Stein manifolds and embeddings}\label{sec-transferSemigroup}
 In this section, we state and prove a theorem about inducing an $(\rea,+)$-action from an $(\rea_+, +)$-action which might be of independent interest. Here $\mathbb{R}_+=\{t\geq0:t\in \mathbb{R}\}.$
    \begin{theorem}\label{Embedding of Steins having R_+ action into Steins having R actions}
    Let $X$ be a Stein manifold and $\theta:\mathbb{R}_+\times X \rightarrow X$ be a continuous semigroup action of $(\mathbb{R}_+,+)$ on $X$ and  $\theta(t,\cdot)=\theta_t:X\rightarrow X$ is holomorphic, for each $t\geq 0$. Then there exists a Stein manifold $\tilde{X}$, a continuous $(\mathbb{R},+)$ action $\tilde{\theta}:\mathbb{R}\times \tilde{X}\rightarrow \tilde{X}$ and an injective holomorphic map $f:X\rightarrow \tilde{X}$ such that $\tilde{\theta}_t\circ f=f\circ\theta_t$ on $X$ for all $t\geq 0$.
\end{theorem}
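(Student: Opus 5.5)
Since each $\theta_t$ is injective by Result \ref{injectivity of continuous semigroup action}, the natural plan is to take $\tilde X$ to be the direct limit of the system
\[
X\xrightarrow{\theta_1}X\xrightarrow{\theta_1}X\xrightarrow{\theta_1}\cdots .
\]
Concretely, I would form the disjoint union $\bigsqcup_{s\ge 0}\{s\}\times X$ and identify $(s,x)\sim(s',x')$ iff $\theta_{r-s}(x)=\theta_{r-s'}(x')$ for some (equivalently every) $r\ge \max(s,s')$. The equivalence is well defined by injectivity of the $\theta_t$ and the semigroup law. Think of $[s,x]$ as ``$\theta_{-s}(x)$''. Then set
\[
f(x)=[0,x],\qquad \tilde\theta_t([s,x])=[s,\theta_t(x)]\ (t\ge0),\qquad \tilde\theta_{-t}([s,x])=[s+t,x].
\]
One checks directly that $\tilde\theta_t\tilde\theta_{-t}=\mathrm{id}$, so these combine into an action of $(\mathbb{R},+)$, and clearly $\tilde\theta_t\circ f=f\circ\theta_t$ for $t\ge0$. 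It suffices to use integer $s$: $\tilde X=\bigcup_{k\in\mathbb N}X_k$ with $X_k=\{[k,x]\}\cong X$, where $X_k$ embeds in $X_{k+1}$ via $\theta_1$.

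Next comes the complex structure. Each chart $x\mapsto[k,x]$ is a bijection of $X$ onto $X_k$. For the transition to be compatible, $\theta_1(X)$ must be open in $X$ and $\theta_1:X\to\theta_1(X)$ biholomorphic. Since $\theta_1$ is an injective holomorphic self-map of an $n$-dimensional manifold, its Jacobian determinant cannot vanish identically. It is then nowhere zero, because an injective equidimensional holomorphic map is a biholomorphism onto an open set (Osgood). Thus $\tilde X$ is a complex manifold (Hausdorff since it is an increasing union of open subsets each homeomorphic to $X$, with the identification injective), and each $X_k$ is an open Stein subset biholomorphic to $X$. Continuity of $\tilde\theta$ on $\mathbb{R}\times\tilde X$ follows locally: near $(t_0,[k,x_0])$ with $k>|t_0|+1$, write points as $[k,x]$ and use $\tilde\theta_t[k,x]=[k,\theta_{t+k}(x)]$ read in chart $X_{k}$ shifted appropriately, i.e.\ $[k,x]=[2k,\theta_k x]$ and $\tilde\theta_t[2k,y]=[2k-t,y]=[2k,\theta_{t}y]$ for $t\ge -k$. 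This reduces everything to continuity of $\theta$. Each $\tilde\theta_t$ is holomorphic by the same formula.

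The main obstacle is Steinness of $\tilde X$. Here I would invoke Result \ref{Stein property of increasing union}: it suffices that $(X_k,X_{k+1})$ is a Runge pair. Via the chart this amounts to showing that $(\theta_1(X),X)$ is a Runge pair. I would attack this exactly as in Theorem \ref{Checking Runge pairs for R-actions}, via Docquier--Grauert (Result \ref{Runge pair by semicontinuous extension}) applied to the family $M_t=\theta_{1-t}(X)$, $t\in[0,1]$. Each $M_t$ is biholomorphic to $X$, hence Stein; the family is increasing with $M_1=X$. Condition (4) follows from Lemma \ref{eventual containment of compacts for manifolds} applied to $\theta_{t_k}\to\theta_s$. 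Condition (5) is the delicate point, since we cannot invert $\theta$ on $X$. I would instead verify it inside $\tilde X$ where $\tilde\theta_{-s}$ exists, arguing as in the proof of Theorem \ref{Checking Runge pairs for R-actions}. Alternatively, I could apply Theorem \ref{Checking Runge pairs for R-actions} directly, with $M=\tilde X$ (only a complex manifold is required there) and $\Omega=X_0$, which is Stein and $\tilde\theta_t$-invariant for $t\ge0$. This gives that $(\tilde\theta_{1}(X_0),X_0)$ is a Runge pair, and applying $\tilde\theta_{-k-1}$ yields that $(X_k,X_{k+1})$ is a Runge pair. Then Result \ref{Stein property of increasing union} gives that $\tilde X$ is Stein, completing the proof.
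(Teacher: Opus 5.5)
Your proposal is correct and is essentially the paper's proof. It uses the same quotient of $X\times\mathbb{R}_+$ (your $[s,x]$ is the paper's $[x,s]$) with the same $\tilde\theta$ and $f=f_0$, and obtains Steinness exactly as the paper does: apply Theorem~\ref{Checking Runge pairs for R-actions} inside $\tilde X$ to $X_0=V_0$, transport by $\tilde\theta_{-(k+1)}$, and invoke Result~\ref{Stein property of increasing union}. The only differences are cosmetic: you check continuity of $\tilde\theta$ in charts rather than through the paper's quotient-map argument (the formula you want there is $\tilde\theta_t[k,x]=[2k,\theta_{k+t}(x)]$ for $t\ge -k$, read in the chart $X_{2k}$), and you make explicit the Osgood step that $\theta_1(X)$ is open with holomorphic inverse, which the paper uses tacitly for its transition maps.
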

The above theorem helps us to get an analogous result of Theorem \ref{Embedding of Stein manifold using R-actions} using action of the semigroup $(\mathbb{R}_+,+)$ without technical difficulties.
In the following proof, we construct a quotient space by giving an equivalence relation on $X\times \mathbb{R}_+$, where $\mathbb{R}_+$ is endowed with the discrete topology.
 Then we show that the quotient space has a complex manifold structure, and we define a $(\mathbb{R},+)$-action on the quotient space.
\begin{proof}
Let us consider the topological space $X\times \mathbb{R}_+$ with $\mathbb{R}_+$ given with the discrete topology. We define a relation $'\sim '$ on $X\times \mathbb{R}_+$ by \[(x,t)\sim (y,s) \ \ \text{if} \ \ \theta_t(y)=\theta_s(x).\]
It is an equivalence relation on $X\times \mathbb{R}_+$. Let $[x,t]$ denote the equivalence class of $(x,t)\in X\times\mathbb{R}_+$. Take the quotient space $\tilde{X}=X\times \mathbb{R}_+/\sim$, and let $q:X\times \mathbb{R}_+\rightarrow \tilde{X}$ be the quotient map given by $q(x,t)=[x,t],\ \forall x\in X, \ \forall t\in \mathbb{R}_+$. Using result \ref{injectivity of continuous semigroup action}, we get that $\theta_t$ is one-one, for each $t\geq 0$. Fix $s\in \mathbb{R}_+$. For any open set $U\subset X$, the set \[q^{-1}\{[x,s]:x\in U\}=\bigcup_{0\leq t<s}(\theta^{-1}_{s-t}\{U\}\times\{t\}) \cup (\bigcup_{t\geq s}(\theta_{t-s}(U)\times \{t\}))\] is open in $X\times \mathbb{R}_+$. Hence $\{[x,s]:x\in U\}$ is open in $\tilde{X}$. Let $V_s=\{[x,s]:x\in X\}\subset \tilde{X} \ (s\geq 0)$. Then each $V_s$ is a open set in $\tilde{X}$. For $0\leq s_1<s_2$,\[  [x,s_1]\in V_{s_1}\Rightarrow [x,s_1]=[\theta_{s_2-s_1}(x),s_2]\in V_{s_2}.\] This implies that $V_{s_1}\subset V_{s_2} $. Let us define a family of maps $f_s:X\rightarrow \tilde{X} \ (s\geq 0)$ by $f_s(x)=[x,s], \ \forall x\in X$. Fix $s\geq 0$. Now \[f_s(x_1)=f_s(x_2)\Rightarrow[x_1,s]=[x_2,s]\Rightarrow \theta_s(x_1)=\theta_s(x_2)\Rightarrow x_1=x_2.\] This implies that each $f_s$ is one-one map from $X$ onto the open set $V_s$. Also, for any open set $U\subset X$, $f_s(U)=\{[x,s]:x\in U\}$ is open. Therefore $f_s$ is a homeomorphism onto the subspace $V_s$ of $\tilde{X}$. For $0\leq s<t$, we have $V_s\cap V_t=V_s$, $f^{-1}_t(V_s)=\theta_{t-s}(X)$. The transition maps 
\[f^{-1}_t\circ f_s:f^{-1}_s(V_s)\rightarrow f^{-1}_t(V_s), \ f^{-1}_t\circ f_s: X\rightarrow \theta_{t-s}(X),\]
\[ f^{-1}_t\circ f_s(x)=f^{-1}_t([x,s])= f^{-1}_t([\theta_{t-s}(x),t])=\theta_{t-s}(x),\] and
\[f^{-1}_s\circ f_t:f^{-1}_t(V_s)\rightarrow f^{-1}_s(V_s), \ f^{-1}_s\circ f_t:\theta_{t-s}(X)\rightarrow X,\]
\[\Rightarrow f^{-1}_s\circ f_t(y)=f^{-1}_s([y,t])=f^{-1}_s([\theta^{-1}_{t-s}(y),s])=\theta^{-1}_{t-s}(y),\] are holomorphic. So that $(f^{-1}_s,f_s(V_s))$ are the manifold valued charts, for each $s\geq 0$.
  Since $f_s(X)\subset f_t(X)$ for any $0\leq s<t$, we obtain that $\tilde{X}=\bigcup _{k\in \mathbb{N}}f_k(X)$. We know that $X$ is second countable. It implies that $f_k(X)$ is second countable for each $k\in \mathbb{N}$. Since $\tilde{X}$ is countable union of $f_k(X)(k\in \mathbb{N})$, it follows that $\tilde{X}$ is second countable. It is easy to check that $\tilde{X}$ is Hausdorff as each $f_k(X)$ is Hausdorff. Therefore $\tilde{X}$ has a complex manifold structure and each $f_s:X\rightarrow \tilde{X}$ is a holomorphic map.  Define an $\mathbb{R}$-action $\tilde{\theta}:\mathbb{R}\times \tilde{X}\rightarrow \tilde{X}$ given by \[\tilde{\theta}(t,[x,s])=\begin{cases}
         [\theta_t(x),s]  \ \ \text{when} \ t\geq 0,\\
         [x,s-t]    \ \ \text{when} \ t\leq 0.
     \end{cases}\]
     It is easy to 
     check that $\tilde{\theta}$ is a well-defined map.
     If $t+t'\geq 0, t,t'\geq 0$ then
     \[ \tilde{\theta}(t+t',[x,s])=[\theta_{t+t'}(x),s]=[\theta_t(\theta_{t'}(x)),s]=\tilde{\theta}(t,[\theta_{t'}(x),s])=\tilde{\theta}(t,\tilde{\theta}(t',[x,s])).\]
     If $t+t'\geq 0, t\geq 0, t'\leq 0$ then 
     \[ \tilde{\theta}(t,\tilde{\theta}(t',[x,s])=\tilde{\theta}(t,[x,s-t'])=[\theta_t(x),s-t']=[\theta_{t+t'}(x),s]=\tilde{\theta}(t+t',[x,s]), \]
     \[ \tilde{\theta}(r',\tilde{\theta}(r,[x,s])=\tilde{\theta}(r',[\theta_r(x),s])=[\theta_r(x),s-r']=[\theta_{r+r'}(x),s]=\tilde{\theta}(r+r',[x,s])\]
     The other cases for $r,r'$ follows in the same way and it follows that $\tilde{\theta}$ is an action of $(\mathbb{R},+)$ on $\tilde{X}$. We now show that for each $t\in \mathbb{R}$, $\tilde{\theta}_t:\tilde{X}\rightarrow\tilde{X}$ is holomorphic. Fix $t\geq 0$ and choose an open set $V_s\subset\tilde{X}$. Then \[(f_s^{-1}\circ\tilde{\theta}_t\circ f_s):X\rightarrow X, \ (f_s^{-1}\circ\tilde{\theta}_t\circ f_s)(x)=\theta_t(x),\] is a holomorphic map on $X$. Hence $\tilde{\theta}_t$ is holomorphic on $V_s$. Since this is true for any $V_s$ it follows that $\tilde{\theta}_t$ is holomorphic on $\tilde{X}$. For $t\leq 0$, similarly we can get that $\tilde{\theta}_t$ is holomorphic on $\tilde{X}$. We now check that $\tilde{\theta}:\mathbb{R}\times \tilde{X}\rightarrow\tilde{X}$ is continuous.
     The following map \[\sigma:\mathbb{R}_+\times (X\times\mathbb{R}_+)\rightarrow \tilde{X}, \ \sigma(t,(x,s))=[\theta_t(x),s] , \ \forall t\in \mathbb{R}_+,\forall (x,s)\in X\times \mathbb{R}_+,\] is continuous. In addition, $\sigma$ satisfies that $\sigma=\tilde{\theta}\circ(id_{\mathbb{R}_+}\times q)$ on $\mathbb{R}_+\times (X\times\mathbb{R}_+)$. Since $\mathbb{R}_+$ with the usual topology is locally compact, it follows that $id_{\mathbb{R}_+}\times q: \mathbb{R}_+\times (X\times\mathbb{R}_+)\rightarrow \mathbb{R}_+\times \tilde{X} $ is a quotient map. Now using the universal property of quotient maps, we get that $\tilde{\theta}$ is continuous on $\mathbb{R}_+\times \tilde{X}$. Let $(r,[x,s])\in \mathbb{R}\times \tilde{X}.$ If $r> 0$ then $\tilde{\theta}$ is continuous at $(r,[x,s])$. Assume $r\leq 0$. Let choose $N\in \mathbb{N}$ such that $N+r>0$. Let us write \[\tilde{\theta}(r,[x,s])=\tilde{\theta}_{-N}\circ \tilde{\theta}(N+r,[x,s]).\] Since $\tilde{\theta}_{-N}:\tilde{X}\rightarrow \tilde{X}$ is holomorphic and $\tilde{\theta}$ is continuous at $(N+r,[x,s])\in \mathbb{R}_+\times \tilde{X}$, it follows that $\tilde{\theta}$ is continuous at $(r,[x,s])$. Therefore we conclude that $\tilde{\theta}$ is continuous on $\mathbb{R}\times \tilde{X}$.
     
     Let us choose $f:X\rightarrow \tilde{X}$ to be $f=f_0$ on $X$. Then for any $t\geq 0$, we obtain that  \[\tilde{\theta}_t([x,0])=[\theta_t(x),0]\Rightarrow \tilde{\theta}_t\circ f_0(x)=f_0\circ \theta_t(x) \Rightarrow \tilde{\theta}_t\circ f(x)=f\circ\theta_t(x) , \ \forall x\in X\]
     We now check that $\tilde{X}$ is Stein.
     Consider $V_0=\{[x,0]:x\in X\}\subset \tilde{X}$. Since $X$ and $V_0$ is biholomorphic to each other $(f:X\rightarrow V_0)$, it follows that $V_0$ is a Stein domain $\tilde{X}$. It is easy to check that $V_0$ is invariant under $\tilde{\theta}_t$, for each $t\geq 0$, and  $\tilde{\theta}_{s_2}(V_0)\subset \tilde{\theta}_{s_1}(V_0)$ for $0\leq s_1\leq s_2$. Now using Theorem \ref{Checking Runge pairs for R-actions} it follows that $(\tilde{\theta}_{s_2}(V_0),\tilde{\theta}_{s_1}(V_0))$ is a Runge pair for any $0\leq s_1\leq s_2$ . Hence $(\tilde{\theta}_{1}(V_0),V_0)$ is a Runge pair. For each $j\in \mathbb{N}$, $\tilde{\theta}_{-j}:\tilde{X}\rightarrow \tilde{X}$ is a holomorhic automorphism of $\tilde{X}$, and this implies that $(\tilde{\theta}_{-j}(V_0),\tilde{\theta}_{-(j+1)}(V_0))=(V_j,V_{j+1})$ is a Runge pair. Now we know that $\tilde{X}=\bigcup_{j\in \mathbb{N}}V_j$, $V_j\subset V_{j+1}$ and $(V_j,V_{j+1})$ is Runge pair, and $V_j$ Stein for each $j\in \mathbb{N}$. Using Result \ref{Stein property of increasing union}, we get that $\tilde{X}$ is a Stein manifold, and $(V_0,\tilde{X})$ is a Runge pair. Therefore $(f(X),\tilde{X})$ is a Runge pair.
     \end{proof}
     \begin{remark}
         If the $\mathbb{R}_+$-action $\theta$ on $X$ extends as an action of $(\mathbb{R},+)$ on $X$, then it follows that the injective holomorphic map $f:X\rightarrow\tilde{X}$ is onto. There exists a large class of examples of such manifolds in \cite{AhernFloresRosay} due to Ahern, Flores and Rosay. 
     \end{remark}
     
     \begin{proof} [Proof of Theorem~\ref{thm-semigroupAction}]
      If $\mathcal{E}_{\mathcal{R}}(\Omega,W)=\emptyset$ then there is nothing to prove. Let us assume that $\Omega\neq \emptyset$  and $\mathcal{E}_{\mathcal{R}}(\Omega,W)\neq \emptyset$. 
      By Result \ref{injectivity of continuous semigroup action}, each $\theta_t:X\rightarrow X$ is an injective holomorphic map.
         Using Theorem \ref{Embedding of Steins having R_+ action into Steins having R actions}, we get a Stein manifold $\tilde{X}$  and an injective holomorphic map $f:X\rightarrow \tilde{X}$ such that $(f(X),\tilde{X})$ is a Runge pair. Also we have $\tilde{\theta}_t\circ f=f\circ\theta_t \ (t\geq 0)$ on $X$. Since $\Omega$ is invariant under $\theta_t \ (t\geq 0)$ so that $f(\Omega)$ is invariant under $\tilde{\theta}_t (t\geq 0)$.
          Since each orbit  $\{\theta_t(x):t\in \mathbb{R}_+\}(x\in X)$ in $X$ intersects $\Omega$, it follows that each orbit $\{\tilde{\theta}_t(z):t\in \mathbb{R}\} \ (z\in \tilde{X})$ intersects $f(\Omega)$.
         We now use Theorem \ref{Embedding of Stein manifold using R-actions} to conclude that $\bigcup_{t\geq 0}\tilde{\theta}_{-t}(f(\Omega))=\tilde{X}$ admits a holomorphic Stein and Runge embedding in $W$. We also get that $\mathcal{E}_{\mathcal{R}}(f(\Omega),W)\subset \overline{\mathcal{E}_{\mathcal{R}}(\tilde{X},W)}|_{f(\Omega)}$ in $\mathcal{E}(f(\Omega),W)$.
         Let $\Psi: \tilde{X}\rightarrow W$ be the Stein and Runge embedding of $\tilde{X}$ in $W$. Since $(f(X),\tilde{X})$ is a Runge pair, by Lemma \ref{Rungepair biholomorphic invariant}, we get that $(\Psi(f(X)),\Psi(\tilde{X}))$ is also a Runge pair. We also get that  $\Psi(f(X))$ is Runge in $W$ as $\Psi(\tilde{X})$ is Runge in $W$. Therefore $\Psi\circ f:X\rightarrow W$ is the Runge embedding of $X$ in $W$. Also, using the map $f:X\rightarrow\tilde{X}$, we can get $\mathcal{E}_{\mathcal{R}}(\Omega,W)\subset\overline{\mathcal{E}_{\mathcal{R}}(X,W)}$ in $\mathcal{E}(\Omega,W)$.
     \end{proof}

The following example can be thought as an analogue of abstract basin of attraction that was given by Forn\ae ss and Stens{\o}nes in \cite{FornaessStensones}.
     
     \begin{example}
     Let $U$ be a Stein and Runge domain in $\Cpn \ (n\geq2)$ and $V$ is a $\mathbb{R}$-complete holomorphic vector field on $\Cpn$. Assume that $U$ is invariant under $\theta_t$ for $t\geq 0$ where $\theta:\mathbb{R}\times\Cpn\rightarrow\Cpn$ is the flow of the holomorphic vector field $V$.
     We define the relation $'\sim'$ on $U\times \mathbb{R}_+$, $(x,s)\sim (y,t)$ if $\theta_t(x)=\theta_s(y)$.
     $X:=U\times \mathbb{R}_+/\sim \ =\{[x,s]:x\in U,s\in \mathbb{R}_+\}$. Here, $\mathbb{R}_+$ is endowed with the discrete topology. Consider the family of maps $f_t:U\rightarrow X$ given by $f_t(x)=[x,t]$, $t\geq0$.
     We have 
     \[
     X=\bigcup _{k\in \mathbb{N}}f_k(U),\;\; \text{and}\;\; f_k(U)\subset f_{k+1}(U)\;\forall k\in \mathbb{N}.
     \]
      Therefore, $X$ has a complex manifold structure and each $f_t:U\rightarrow X$ is holomorphic one-one  map. This can be shown in exactly the same way as in our proof of Theorem \ref{Embedding of Steins having R_+ action into Steins having R actions}.
     We want to embed $X$ as a Stein Runge domain in $\Cpn$.
     Define an $\mathbb{R}$-action $\tilde{\theta}:\mathbb{R}\times X\rightarrow X$ by \[\tilde{\theta}(t,[x,s])=\begin{cases}
         [\theta_t(x),s]  \ \ \text{when} \ t\geq 0,\\
         [x,s-t]    \ \ \text{when} \ t\leq 0.
     \end{cases}\]\\
    This is well-defined, continuous and each $\tilde{\theta}_t:X\rightarrow X (t\in \mathbb{R})$ is holomorphic (see Theorem \ref{Embedding of Steins having R_+ action into Steins having R actions}).
     Choose $\Omega=\{[x,0]:x\in U\}\subset X$. For any $t\geq 0, \ [x,0]\in \Omega$, we have $\tilde{\theta}_t([x,0])=[\theta_t(x),0]\in \Omega$. Therefore  $\Omega$ is invariant under $\tilde{\theta}_t \ (\forall t\geq 0)$. $(\tilde{\theta}_t(\Omega),\Omega)$ is a Runge pair for each $t\geq 0$ (by Theorem \ref{Checking Runge pairs for R-actions}). $X$ is also a Stein manifold. $f_0^{-1}:\Omega\rightarrow U$ is a biholomorphic map onto $U$. Since $U$ is Runge in $\Cpn$ it follows from Theorem \ref{Embedding of Stein manifold using R-actions} that $f_0^{-1}$ can be approximated by elements of $ \mathcal{E}_{\mathcal{R}}(X,\Cpn)$ locally uniformly on $\Omega$. Therefore $ \mathcal{E}_{\mathcal{R}}(X,\Cpn)\neq \emptyset$ and $X$ has a holomorphic Runge embedding in $\Cpn$.  
\end{example}

\section{$\cplx^n$-valued solution of the Loewner PDE}
We now prove our main theorem on the Loewner PDE. We see the Loewner range as an increasing union of Stein domains, and in each step we construct a continuous isotopy to use Theorem \ref{Main theorem for embedding Stein manifolds}. We choose the Stein manifold $W$ as a domain with the density property in $\Cpn$. It allows us to get an embedding into $\Cpn$. 
\begin{proof}[Proof of Theorem \ref{Solution of Loewner on holomorphically convex domains in density property}:]
Using Result \ref{Starlike-existence of phi},
    on a complete hyperbolic domain $D$ in $\Cpn$, any Herglotz vector field $G(z,t)$ of order $d\in[1,+\infty]$ admits a unique evolution family $(\phi_{s,t}$) of order $d$ over $D$ such that, for all $z\in D$,
    \begin{equation*}\label{Loewner ODE1}
        \frac{\partial\phi_{s,t}}{\partial t}(z)=G(\phi_{s,t}(z),t) \ \ a.e. \ t\in [s,+\infty).
    \end{equation*}
    Again, by the Result \ref{Starlike-existence of univalent family for Loewner PDE}, there always exists a family of univalent mappings $(g_t:D\rightarrow N)$ satisfying \begin{equation}\label{evolution family associated to Loewner chain}
         g_s=g_t\circ\phi_{s,t}, 0\leq s\leq t \ \ \text{on} \ D,
    \end{equation}
    with values in a $n$-dimensional complex manifold $N$, which depends on $G(z,t)$.
    From equation (\ref{evolution family associated to Loewner chain}), it follows that $g_s(D)\subset g_t(D) (0\leq s\leq t).$
    We can rewrite the equation (\ref{evolution family associated to Loewner chain}) as \begin{equation}\label{evolution family associated to Loewner chain modified}
        g^{-1}_t\circ g_s=\phi_{s,t} \ \text{on}\ D \ (0\leq s\leq t).
    \end{equation} Also, using Result \ref{Solution of Loewner PDE with values in a manifold}, we know that any other solution to (\ref{Loewner PDE1}) with values in a complex manifold $Q$ is of the form $(\Lambda\circ g_t)$, where $\Lambda:N \rightarrow Q$ is holomorphic.
    Our aim is to embed the Loewner range $N$ into $\Cpn$.
    Choose any $k,m\in \mathbb{N}\cup\{0\}=\mathbb{N}_0$ such that $k<m$, and let us construct an isotopy, $H:[0,1]\times g_k(D)\rightarrow g_k(D)$ by 
    $$H(t,w)=g_k\circ g_{(1-t)k+tm}^{-1}(w), \ \forall w\in g_k(D).$$
    For  $w\in g_k(D)$, we obtain that
    \begin{align*}
        H(t,w)=&g_k\circ g_{(1-t)k+tm}^{-1}\circ g_k\circ g_k^{-1}(w)\\
               =& g_k\circ (g_{(1-t)k+tm}^{-1}\circ g_k)\circ g_k^{-1}(w)\\
               =& g_k\circ \phi_{k,(1-t)k+tm}\circ g_k^{-1}(w) \ (\text{by equation (\ref{evolution family associated to Loewner chain modified})})
    \end{align*}
    Using Result \ref{continuity of phis,t}, we get that $\phi_{k,(1-t)k+tm}$ is a continuous family in $t\in [0,1]$. 
    Therefore, $H$ is continuous on $[0,1]\times g_k(D)$  and $H(0,w)=w$.
    We also obtain that $H(1,\cdot)=H_1: g_k(D)\rightarrow g_k(D)$ can be extended to an injective holomorphic map \[\tilde{H}_1:g_m(D)\rightarrow g_k(D), \text{ given by } \tilde{H}_1(w)=g_k\circ g_m^{-1}(w).\] It follows from the fact that $\tilde{H}_{1}=H_1$ on $g_k(D)$.
   Since $(g_k(D),g_k(D))$ is a Runge pair and $\tilde{H}_1(g_m(D))=g_k(D)$, it follows that $(\tilde{H}_{1}(g_m(D)),g_k(D))$ is a Runge pair.
 Next, we claim that $(H_{t}(g_k(D)),g_k(D))$ is a Runge pair for each $t\in [0,1]$ and $k\in \mathbb{N}_0$.
By Result \ref{Runge pairs in Loewner range Hamada}, we know that for $s\geq0$, and each $k\in \mathbb{N}_0$, $(g_k(D),g_{k+s}(D))$ is a Runge pair.
We now apply the injective holomorphic map $g_{k+s}^{-1}:g_{k+s}(D)\rightarrow D$ on the Runge pair $(g_k(D),g_{k+s}(D))$. Using Lemma \ref{Rungepair biholomorphic invariant}, we get that $(g_{k+s}^{-1}\circ g_k(D),D)$ is a Runge pair, which is equivalent to the fact that $(\phi_{k,k+s}(D),D)$ is a Runge pair. Again we apply the injective holomorphic map  $g_k:D\rightarrow g_k(D)$,
and we get that $(g_k\circ \phi_{k,k+s}(D),g_k(D))$ is a Runge pair in $N$. Now \[H_t=g_k\circ \phi_{k,(1-t)k+tm}\circ g_k^{-1} \ \text{on}\  g_k(D)\Rightarrow H_t\circ g_k=g_k\circ \phi_{k,(1-t)k+tm} \ \text{on} \ D.\]
Hence $(H_t\circ g_k(D),g_k(D))$ is a Runge pair for each $t\in [0,1]$. Therefore we conclude that for each pair $(g_k(D),g_{k+1}(D))\ (k\in \mathbb{N}_0)$ we can find an isotopy of injective holomorphic maps that satisfies assumptions $(i),(ii),(iii)$ of Theorem \ref{Main theorem for embedding Stein manifolds}.  \\
Now, for each $j\in \mathbb{N}$, we write $\Omega_j=g_{j-1}(D)$. Since $D$ is complete hyperbolic domain, we get that $D$ is Stein (see Theorem $3.4$, Chapter V in \cite{Kobayashi}). Again $g_{j-1}:D\rightarrow \Omega_j$ is an one-one, onto holomorphic map. Hence $\Omega_j$ is also a Stein domain in $N$. Now we know that $\Omega_j\subset\Omega_{j+1}$, $\cup_{j\in \mathbb{N}}\Omega_j=N$, and $(\Omega_j,\Omega_{j+1})$ is Runge pair for each $j\in \mathbb{N}$. By Result \ref{Stein property of increasing union}, $N$ is a Stein manifold. 
From the assumption, there exists an injective holomorphic map $\Psi:D\rightarrow W$ such that $\Psi(D)$ is Runge in $W$. Consider the holomorphic embedding $\Psi_0=\Psi\circ g_0^{-1}:g_0(D)\rightarrow W$. Then $\Psi_0(g_0(D))$ is Runge in $W$. This implies that ${\mathcal{E}}_{\mathcal{R}}(g_{0}(D),W)\neq \emptyset$. Therefore $\{\Omega_j\}_{j\in \mathbb{N}}, N$ satisfies all the assumption to apply Theorem \ref{Main theorem for embedding Stein manifolds}. 
Now, using Theorem \ref{Main theorem for embedding Stein manifolds}, we conclude that the Loewner range $N=\cup_{k\in \mathbb{N}_0} g_k(D)$ can be embedded as a Stein and Runge domain in $W$. Assume that $F:N\rightarrow\Cpn$ is the corresponding holomorphic embedding map into $\Cpn$. Then $f_t=F\circ g_t: D\rightarrow\Cpn$ is the solution of the Loewner PDE 
\begin{equation*}
     \frac{\partial f_{t}}{\partial t}(z)=-df_t(z)G(z,t),   \ a.e. \  t\geq 0, \ \forall z\in D,
\end{equation*}
and $R=F(N)$ is the Loewner range in $\Cpn$. Any other solution with values in $\Cpn$ is of the form $(\Lambda\circ F^{-1})\circ f_t$ where  $\Lambda\circ F^{-1}:F(N)\rightarrow Q=\Cpn$.\\
\end{proof}

\begin{proof}[Proof of Corollary \ref{cor-Solution of Loewner on complete hyperbolic domains that admits special vector fields}] 
    From the given $G_{t_0}$ and Proposition \ref{Runge embedding of phi-like domains}, it follows that there exists a Runge embedding of $D$, say $\Phi:D\rightarrow \Cpn$. Now we use Theorem \ref{Solution of Loewner on holomorphically convex domains in density property} and the result follows.  
\end{proof}
  Next, we provide the first example of a complete hyperbolic domain which is not Runge in $\Cpn$ and non-contractible, but the Loewner PDE admits $\Cpn$-valued solution.
 \begin{example}\label{Example-Solution of Loewner on non-Runge domains}
     Consider the analytic polyhedron $D=\{(z_1,z_2,z_3)\in \mathbb{C}^*\times \mathbb{C}^2: |z_1|<5,|z_2|<5,|z_3|<5,|\frac{1}{z_1}|<1,|p(z_2,z_3)|<1\}$, where $p$ is any non-constant homogeneous polynomial in $z_2,z_3$ such that $D\neq \emptyset$.
     Then, $D$ is complete hyperbolic domain as it is an analytic polyhedra (see Chapter IV, Corollary 4.5 in \cite{Kobayashi}).  Clearly, $D$ is Runge in $\mathbb{C}^*\times \mathbb{C}^2$. The holomorphic function $h:D\rightarrow\mathbb{C}, \ h(z_1,z_2,z_3)=\frac{1}{z_1}$ is not a uniform limit of polynomials on $\{(2e^{i\theta},0,0):\theta\in\mathbb{R}\}\subset D$ . Therefore, $D$ is not Runge in $\mathbb{C}^3$. Consider the vector field $G:D\times \mathbb{R}^+\rightarrow \mathbb{C}^3$  \[
     G((z_1,z_2,z_3),t)=(iz_1,-z_2,-z_3).
     \]
     For each $t\in [0,\infty)$, 
     \[
     x(s)=(e^{is}a_1,e^{-s}a_2,e^{-s}a_3), \;\;s\in[ 0,\infty),
     \]
     is the solution of the Cauchy problem  \[\frac{d}{ds}x(s)=G(x(s),t), \ x(0)=(a_1,a_2,a_3).\] It is easy to check that $G$ is a Herglotz vector field on $D$. Since $\mathbb{C}^*\times \mathbb{C}^2$ has the density property  (see \cite{Varolin}), hence, by Theorem \ref{Solution of Loewner on holomorphically convex domains in density property}, we conclude that the following Loewner PDE
    \begin{equation*}
      \frac{\partial f_{t}}{\partial t}(z_1,z_2,z_3)=-df_t(z_1,z_2,z_3)(iz_1,-z_2,-z_3),   \ a.e. \  t\geq 0, \ \forall z\in D, 
     \end{equation*}
   admits a $\mathbb{C}^*\times \mathbb{C}^2$-valued solution
   $f_t:D\rightarrow \mathbb{C}^*\times \mathbb{C}^2\ (t\geq0)$. 
    
 \end{example}

 \noindent {\bf Acknowledgements.} Sushil Gorai is partially supported by an ARG MATRICS grant (ANRF/ARGM/2025/002958/MTR).
 Gourab Paul is supported by CSIR Senior Research Fellowship  (09/0921(17090)/2023-EMR-I).

\end{document}